\theoremstyle{definition}
\newtheorem{theorem}{Theorem}
\newtheorem{lemma}{Lemma}
\newtheorem{corollary}{Corollary}
\newtheorem{remark}{Remark}
\DeclareMathOperator{\sgn}{sgn}
\newcommand{\an}{\text{~~and~~}}
\title{{\bf An Approximate Solver for Multi-medium Riemann Problem
with Mie-Gr{\"u}neisen Equations of State}}
\author[pku]{Li Chen}
\ead{cheney@pku.edu.cn}
\author[pkuextra]{Ruo Li\corref{cor}}
\ead{rli@math.pku.edu.cn}
\author[pku,nint]{Chengbao Yao}
\ead{yaocheng@pku.edu.cn}
\address[pku]{School of Mathematical Sciences,
	Peking University, Beijing, China}
\address[pkuextra]{HEDPS \& CAPT, LMAM \& 
	School of Mathematical Sciences,
	Peking University, Beijing, China}
\address[nint]{Northwest Institute of Nuclear Technology, Xi'an, China}   
\begin{document}
\begin{abstract} 
We propose an approximate solver for multi-medium Riemann problems with
materials described by a family of general Mie-Gr{\"u}neisen equations of state,
which are widely used in practical applications.  The solver provides the
interface pressure and normal velocity by an iterative method. The
well-posedness and convergence of the solver is verified with mild assumptions
on the equations of state. To validate the solver, it is employed in computing
the numerical flux on phase interfaces of a numerical scheme on Eulerian grids
that was developed recently for compressible multi-medium flows. Numerical
examples are presented for Riemann problems, air blast and underwater explosion
applications.
\end{abstract}

\begin{keyword} 
Multi-medium Riemann problem, 
approximate Riemann solver,
Mie-Gr{\"u}neisen equation of state, 
multi-medium flow 
\end{keyword}
\maketitle

\section{Introduction}
Numerical simulations of compressible multi-medium flow are of great interest in
practical applications, such as mechanical engineering, chemical industry, and
so on. Many conservative Eulerian algorithms perform very well in single-medium
compressible flows. However, when such an algorithm is employed to compute
multi-medium flows, numerical inaccuracies may occur at the material interfaces
\cite{Abgrall1996, Saurel2007, Liu2003}, due to the great discrepancy of
densities and equations of state across the interface. The simulation of
compressible multi-medium flow in an Eulerian framework requires special
attention in describing the interface connecting distinct fluids, especially for
the problems that involve highly nonlinear equations of state. Several
techniques have been taken to treat the multi-medium flow interactions. See
\cite{Liu2003, Abgrall2001, Karni1994, Arienti2004, Shyue2001, Saurel1999,
Price2015} for instance.

A typical procedure of multi-medium compressible flows in Eulerian grids mainly
consists of two steps, the interface capture and the interaction between
different fluids. There are mainly two different approaches in literatures, the
diffuse interface method (DIM) and the sharp interface method (SIM). The former
\cite{Abgrall1996, Abgrall2001, Saurel1999, Saurel2009, Petitpas2009,
Ansari2013} assumes the interface as a diffuse zone, and smears out the
interface over a certain number of grid cells to avoid pressure oscillations.
Diffuse interfaces correspond to artificial mixtures created by numerical
diffusion, and the key issue is to fulfill interface conditions within this
artificial mixture. The latter assumes the interface to be a sharp contact
discontinuity, and different fluids are immiscible. Several approaches such as
the volume of fluid (VOF) method \cite{Scardovelli1999, Noh1976}, level set
method \cite{Sethian2001, Sussman1994}, moment of fluid (MOF) method
\cite{Ahn2007, Dyadechko2008, Anbarlooei2009} and front-tracking method
\cite{Glimm1998, Tryggvason2001} are used extensively to capture the interface.
A key element for both diffuse and sharp interface methods, is to determine
the interface states. The accurate prediction of the interface states can be
used to stabilize the numerical diffusion in diffuse interface methods, and to
compute the numerical flux and interface motion in sharp interface methods. One
common approach is to solve a multi-medium Riemann problem which contains the
fundamentally physical and mathematical properties of the governing equations.
Indeed, the Riemann problem plays a key role in understanding the wave
structures, since a general fluid flow may be interpreted as a nonlinear
superpositions of the Riemann solutions. 

The solution of a multi-medium Riemann problem depends not only on the initial
states at each side of the interface, but also on the forms of equations of
state. For some simple equations of state, such as ideal gas, covolume gas or
stiffened gas, the solution of the Riemann problem can be achieved to any
desired accuracy with an exact solver.  While the Riemann problems for the above
equations of state have been fully investigated in \cite{Godunov1976, Plohr1988,
Gottlieb1988, Toro2008} for instance, there exist some difficulties in the cases
of general nonlinear equations of state due to their high nonlinearity. A
variety of methods to solve the corresponding Riemann problems have then been
proposed. For example, Larini \textit{et al.} \cite{Larini1992} developed an
exact Riemann solver and applied their methods to a fifth-order virial equation
of state (EOS), which is particularly suited to the gaseous detonation products
of high explosive compounds. Shyue \cite{Shyue2001} developed a Roe's
approximate Riemann solver for the Mie-Gr{\"u}neisen EOS with variable
Gr{\"u}neisen coefficient.  Quartapelle \textit{et al.} \cite{Quartapelle2003}
proposed an exact Riemann solver by applying the Newton-Raphson iteration to the
system of two nonlinear equations imposing the equality of pressure and of
velocity, assuming as unknowns the two values of the specific volume at each
side of the interface, and implemented it for the van der Waals gas.  Arienti
\textit{et al.} \cite{Arienti2004} applied a Roe-Glaster solver to compute the
equations combining the Euler equations involving chemical reaction with the
Mie-Gr{\"u}neisen EOS.  More recently, Rallu \cite{Rallu2009} and Farhat
\textit{et al.} \cite{Farhat2012} utilized a sparse grid technique to tabulate
the solutions of exact multi-medium Riemann problems.  Lee \textit{et al.}
\cite{Lee2013} developed an exact Riemann solver for the Mie-Gr{\"u}neisen EOS
with constant Gr{\"u}neisen coefficient, where the integral terms are evaluated
using an iterative Romberg algorithm.  Banks \cite{Banks2010} and Kamm
\cite{Kamm2015} developed a Riemann solver for the convex Mie-Gr{\"u}neisen EOS
by solving a nonlinear equation for the density increment involved in the
numerical integration of rarefaction curves, and chose the JWL
(Jones-Wilkins-Lee) EOS	as a representative case.

In this paper, we propose an approximate multi-medium Riemann solver for a
family of general Mie-Gr{\"u}neisen equations of state in an iterative manner,
which provides a strategy to reproduce the physics of interaction between two
mediums across the interface. Several mild conditions on the coefficients of
Mie-Gr{\"u}neisen EOS are assumed to ensure the convexity of equations of state
and the well-posedness of our Riemann solver. The algebraic equation of the
Riemann problem is solved by an inexact Newton method \cite{Dembo1982}, where
the function and its derivative are evaluated approximately depending on the
wave configuration. And the convergence of our Riemann solver is analyzed. To
validate the proposed solver, we employed it in the computation of two-medium
compressible flows with Mie-Gr{\"u}neisen EOS, as an extension of the numerical
scheme that was developed recently for two-medium compressible flows with ideal
gases
\cite{Guo2016}.

The rest of this paper is arranged as follows. In Section \ref{sec:riemann}, a
solution strategy for the multi-medium Riemann problem with arbitrary
Mie-Gr{\"u}neisen equations of state is presented. In Section \ref{sec:aps}, the
procedures of our approximate Riemann solver are outlined, and the
well-posedness and convergence are analyzed. In Section \ref{sec:model}, the
application of our Riemann solver in two-medium compressible flow calculations
\cite{Guo2016} is briefly introduced. In Section \ref{sec:num}, several
classical Riemann problems and applications for air blast and underwater
explosions are carried out to validate the accuracy and robustness of our
solver. Finally, a short conclusion is drawn in Section \ref{sec:conclusion}.


\section{Multi-medium Riemann Problem}
\label{sec:riemann}

Let us consider the following one-dimensional multi-medium Riemann problem of
the compressible Euler equations 
\begin{subequations}
\begin{equation}
  \dfrac{\partial}{\partial\tau}
  \begin{bmatrix}
    \rho \\ \rho u \\ E
  \end{bmatrix}
  +\dfrac{\partial}{\partial\xi}
  \begin{bmatrix}
    \rho u \\ \rho u^2+p \\ (E+p)u
  \end{bmatrix}
  =\bm 0,\quad
  E=\rho e+\dfrac{1}{2}\rho u^2.
\end{equation}
Here $\tau$ is time and $\xi$ is spatial coordinate, and $\rho$, $u$, $p$, $E$
and $e$ are the density, velocity, pressure, total energy and specific internal
energy, respectively. The system has initial values
\begin{equation}
  [\rho,u,p]^\top (\xi,\tau=0) =
  \begin{cases}
    [\rho_l,u_l,p_l]^\top, & \xi<0, \\
    [\rho_r,u_r,p_r]^\top, & \xi>0.
  \end{cases}
\end{equation}
\label{system:oneriemann}
\end{subequations}

Here the equations of state for both mediums under our consideration can be
classified into the family known as \emph{the Mie-Gr\"uneisen EOS}. The
Mie-Gr\"uneisen EOS can be used to describe a lot of real materials, for
example, the gas, water and gaseous products of high explosives
\cite{Arienti2004, Liu2003, Kamm2015}, which is particularly useful in those
practical applications we are studying now. The general form of the
Mie-Gr\"uneisen EOS is given by
\begin{equation}
p(\rho, e)=\varGamma(\rho)\rho e + h(\rho),
\label{eq:particulareos}
\end{equation}
where $\varGamma(\rho)$ is the Gr{\"u}neisen coefficient, and $h(\rho)$ is a
reference state associated with the cold contribution resulting from the
interactions of atoms at rest \cite{Heuze2012}. Thus the EOS of the multi-medium
Riemann problem \eqref{system:oneriemann} is given by
\[
p(\rho, e) = 
\begin{cases}
\varGamma_l(\rho)\rho e+h_l(\rho), \\
\varGamma_r(\rho)\rho e+h_r(\rho),
\end{cases}
\]
for the medium on the left and the right sides, respectively. For the ease of
our analysis, we impose on $\varGamma(\rho)$ and $h(\rho)$ the following
assumptions: \bigskip

\textbf{(C1)} $\varGamma'(\rho) \le 0,~(\rho\varGamma(\rho))' \ge 
0,~(\rho\varGamma(\rho))''\ge 0$;
\medskip

\textbf{(C2)} $\lim\limits_{\rho\rightarrow+\infty}\varGamma(\rho)=
  \varGamma_\infty>0,~\varGamma(\rho)\le \varGamma_{\infty}+2$;
\medskip

\textbf{(C3)} $h'(\rho)\ge 0,~h''(\rho)\ge 0$.
\bigskip
\begin{remark}
  An immediate consequence is that the Gr{\"u}neisen coefficient
  $\varGamma(\rho)$ must be nonnegative since
  $\varGamma(\rho) \ge-\varGamma'(\rho)\rho \ge 0$ by the condition
  \textbf{(C1)}.
\end{remark} 
A lot of equations of state of our interests fulfill these assumptions.
Particularly, we collect some equations of state in Appendix A which are used in
our numerical tests as examples. These examples include ideal gas EOS, stiffened
gas EOS, polynomial EOS, JWL EOS, and Cochran-Chan EOS. The coefficients
$\varGamma(\rho)$, $h(\rho)$ and their derivatives for these equations of state 
are all provided therein.

The Riemann problem for general convex equations of state have been fully
analyzed, for example, in \cite{Smith1979, Menikoff1989}. Here the problem is
more specific, thus we can present the structure of the solution in a
straightforward way. The property of EOS is essential on the wave structures in
the solution of Riemann problems. It is pointed out that the wave structures are
composed solely of elementary waves \cite{Menikoff1989} if the \emph{fundamental
derivative} \cite{Thompson1971}
\[
\mathscr G=\dfrac{1}{c}\cdot\left.\dfrac{\partial\rho c}{\partial\rho}\right|_s=
1+\dfrac{\rho}{2c^2}\cdot\left.\dfrac{\partial c^2}{\partial \rho}\right|_s,
\]
keeps positive 
\footnote{ 
  When the positivity condition $\mathscr G>0$ is violated, other configurations
  of waves may occur, such as composite waves, split waves, expansive shock
  waves or compressive rarefaction waves \cite{Menikoff1989}. The above
  anomalous wave structures are common issues in phase transitions. For further
  discussions on these anomalous wave structures, we refer the readers to
  \cite{Weyl1949, Thompson1973, Liu1975, Liu1976, Pego1986, Bethe1998,
  Bates2002, Voss2005, Muller2006, Fossati2014} for instance.}, 
where $s$ is the specific entropy and $c$ is the speed of sound. For the
Mie-Gr{\"u}neisen EOS \eqref{eq:particulareos}, the speed of sound can
be expressed as
\[
c(p,\rho)=\sqrt{\left.\dfrac{\partial p}{\partial \rho}\right|_e
	+\dfrac{p}{\rho^2}
	\left.\dfrac{\partial p}{\partial e}\right|_{\rho}}=
\sqrt{\left(\dfrac{1}{\rho} +
	\dfrac{\varGamma_k'(\rho)}{\varGamma_k(\rho)}\right)(p-h_k(\rho))
	+\dfrac{p}{\rho}\varGamma_k(\rho) + h_k'(\rho)},
\]
and a tedious calculus gives us that the fundamental derivative is
\begin{equation}
\begin{split}
\mathscr G = &\dfrac{1}{c^2}\left(
\dfrac12\left(\rho(\rho\varGamma_k(\rho))''
+{(\rho\varGamma_k(\rho))'}(2+\varGamma_k(\rho))\right)e+\dfrac12\rho
h_k''(\rho) \right. \\
&\left.+\dfrac{p}{2\rho}(\varGamma_k^2(\rho)+2(\rho\varGamma_k(\rho))')
+\dfrac12(2+\varGamma_k(\rho))h_k'(\rho)\right).
\end{split}
\label{eq:fundd}
\end{equation}
We conclude that
\begin{lemma}
  The solution of the multi-medium Riemann problem 
  \eqref{system:oneriemann} consists of only elementary waves if the
  conditions \textbf{(C1), (C2)} and \textbf{(C3)} are fulfilled.
\end{lemma}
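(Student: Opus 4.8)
The plan is to invoke the criterion recalled just above the statement: by the classical theory of \cite{Menikoff1989}, the solution of the Riemann problem \eqref{system:oneriemann} is composed solely of elementary waves (shocks, contacts, rarefactions) as soon as the fundamental derivative $\mathscr G$ stays positive on the physically admissible set of states, which here is $\rho>0$, $p>0$, $e\ge0$ (equivalently $p\ge h_k(\rho)$, since $p-h_k(\rho)=\varGamma_k(\rho)\rho e$); this set contains the initial data and carries a well-defined, hyperbolic EOS. So it suffices to prove $\mathscr G>0$ on that set for each medium $k\in\{l,r\}$, working directly from the explicit formula \eqref{eq:fundd}.

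First I would extract two elementary consequences of the hypotheses. By \textbf{(C1)} the map $\rho\mapsto\varGamma_k(\rho)$ is non-increasing, and since \textbf{(C2)} forces $\varGamma_k(\rho)\to\varGamma_\infty>0$ as $\rho\to+\infty$, one gets $\varGamma_k(\rho)\ge\varGamma_\infty>0$ for all $\rho>0$; in particular $2+\varGamma_k(\rho)>0$ and $\varGamma_k^2(\rho)\ge\varGamma_\infty^2>0$. I would also check that \eqref{eq:fundd} is meaningful, i.e.\ $c^2>0$: in the expression for $c(p,\rho)$ the coefficient $\tfrac1\rho+\tfrac{\varGamma_k'(\rho)}{\varGamma_k(\rho)}$ equals $(\rho\varGamma_k(\rho))'/(\rho\varGamma_k(\rho))\ge0$ by \textbf{(C1)}, it multiplies $p-h_k(\rho)=\varGamma_k(\rho)\rho e\ge0$, and the remaining summands $\tfrac{p}{\rho}\varGamma_k(\rho)>0$ and $h_k'(\rho)\ge0$ (the latter by \textbf{(C3)}); hence $c^2>0$ and $1/c^2$ is positive.

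The core step is a termwise sign check inside the parenthesis of \eqref{eq:fundd}. The coefficient of $e$, namely $\tfrac12\bigl(\rho(\rho\varGamma_k(\rho))''+(\rho\varGamma_k(\rho))'(2+\varGamma_k(\rho))\bigr)$, is $\ge0$ because $(\rho\varGamma_k(\rho))''\ge0$ and $(\rho\varGamma_k(\rho))'\ge0$ by \textbf{(C1)} while $2+\varGamma_k(\rho)>0$; multiplied by $e\ge0$ it stays $\ge0$. The terms $\tfrac12\rho h_k''(\rho)$ and $\tfrac12(2+\varGamma_k(\rho))h_k'(\rho)$ are $\ge0$ by \textbf{(C3)} (and $2+\varGamma_k(\rho)>0$). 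Finally $\tfrac{p}{2\rho}\bigl(\varGamma_k^2(\rho)+2(\rho\varGamma_k(\rho))'\bigr)$ is not merely nonnegative: since $(\rho\varGamma_k(\rho))'\ge0$ and $\varGamma_k^2(\rho)\ge\varGamma_\infty^2$, it is bounded below by $\tfrac{p}{2\rho}\varGamma_\infty^2>0$. Adding the four contributions and dividing by $c^2>0$ yields $\mathscr G\ge\dfrac{p\,\varGamma_\infty^2}{2\rho c^2}>0$, exactly what the cited criterion needs; applying it to $k=l$ and $k=r$ finishes the proof.

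I do not expect a real obstacle here — the computation leading to \eqref{eq:fundd} is the tedious part and is already granted. The only point that needs care is the \emph{domain}: one must be explicit that positivity is asserted on the admissible region $\rho,p>0$, $e\ge0$ (on which, as above, $c^2>0$, so the EOS is genuinely hyperbolic and the wave-structure theorem applies). Conceptually, \textbf{(C1)} and \textbf{(C3)} only deliver nonnegativity of the separate terms in \eqref{eq:fundd}; the hypothesis doing the decisive extra work is \textbf{(C2)}, which upgrades $\varGamma_k\ge0$ (the Remark) to the quantitative bound $\varGamma_k\ge\varGamma_\infty>0$ that makes $\mathscr G$ \emph{strictly} positive. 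No monotonicity-of-curves or integration machinery is required.
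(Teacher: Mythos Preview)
Your proposal is correct and follows essentially the same route as the paper: verify termwise from \eqref{eq:fundd} that $\mathscr G>0$ under \textbf{(C1)}--\textbf{(C3)}, then invoke the Menikoff--Plohr criterion. The paper compresses this into the phrase ``a direct check on the terms in \eqref{eq:fundd},'' whereas you spell out the check explicitly (including the useful observation that $c^2>0$ on the admissible set and that \textbf{(C2)} is what supplies the \emph{strict} positivity via $\varGamma_k\ge\varGamma_\infty>0$).
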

\begin{proof}
  With the conditions \textbf{(C1), (C2)} and \textbf{(C3)}, a direct
  check on the terms in \eqref{eq:fundd} gives us that
  \[
  \mathscr G > 0,
  \] 
  then the result in \cite{Menikoff1989} is applied to give us the
  conclusion.
\end{proof}
Precisely, the solution of \eqref{system:oneriemann} consists of four
constant regions connected by a linearly degenerate wave and two
genuinely nonlinear waves (either shock wave or rarefaction wave,
depending on the initial states), as is shown schematically in Fig.
\ref{fig:Riemann-Fluid}. The linearly degenerate wave is actually the
phase interface. 

\begin{figure}[htbp]
\centering
\includegraphics[width=.7\textwidth]{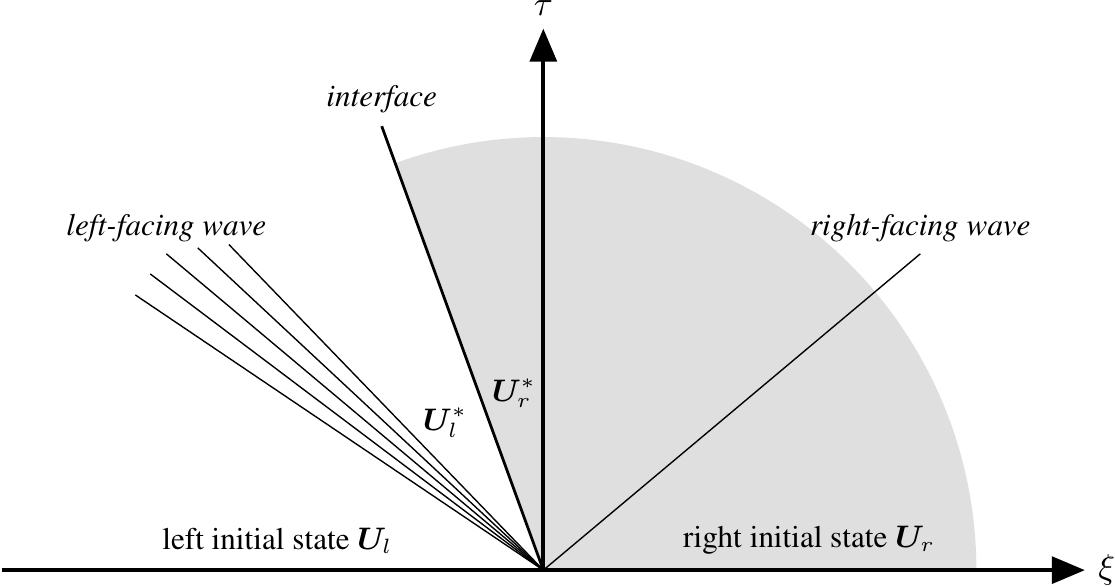}
\caption{Typical wave structure of the Riemann problem 
in $\xi - \tau$ space.}
\label{fig:Riemann-Fluid}
\end{figure}

Following the convention on notations, we denote the pressure and the velocity
by $p^*$ and $u^*$ in the star region, respectively, which have the same value
crossing over the phase interface. This allows us to derive a single nonlinear
algebraic equation for $p^*$. Then we will solve this algebraic equation by an
iterative method. At the beginning of each step of the iterative method, a
provisional value of $p^*$ determines the wave structures from four possible
configurations. The wave structures then prescribe the specific formation of the
algebraic equation. Based on the residual of the algebraic equation, the value
of $p^*$ is updated, which closes a single loop of the iterative method. 

Below let us give the details of the plan above. Firstly we need to study the
relations of the solution across a nonlinear wave, saying a shock wave or a
rarefaction wave. For convenience, we use the subscript $k = l$ or $r$ standing
for either the left initial state $l$ or the right initial state $r$ hereafter.

\begin{itemize}
\item[-] {\bf Solution through a shock wave}

If $p^*>p_k$, the corresponding nonlinear wave is a shock wave, and 
the star region state $\bm U_k^*$ is connected to the adjacent initial
state $\bm U_k$ through a Hugoniot locus. The Rankine-Hugoniot jump 
conditions \cite{Toro2008} yield

\begin{equation}
\mp(u^*-u_k) = 
\left((p^*-p_k)\left(\dfrac{1}{\rho_k}-\dfrac{1}{\rho^*_k}
\right)\right)^{1/2},
\label{eq:diffushock}
\end{equation}
and 
\begin{equation}
e(p^*,\rho^*_k) - e(p_k,\rho_k) + \dfrac{1}{2}(p^*+p_k)
\left(\dfrac{1}{\rho^*_k} -\dfrac{1}{\rho_k} \right) = 0,
\label{eq:eose}
\end{equation}
where 
\[
e(p,\rho)=\dfrac{p-h_k(\rho)}{\varGamma_k(\rho)\rho}.
\]
Multiplying both sides of the equality \eqref{eq:eose} by
$\rho_k\rho_k^*\varGamma_k(\rho_k)\varGamma_k(\rho_k^*)$ gives rise to
\[
\begin{split}
&\varGamma_k(\rho_k)\rho_k (p^*-h_k(\rho_k^*)) -
\varGamma_k(\rho_k^*) \rho_k^* (p_k-h_k(\rho_k)) \\
&-\dfrac{1}{2}\varGamma_k(\rho_k^*)\varGamma_k(\rho_k)(p^*+p_k)
(\rho_k^*-\rho_k)=0.
\end{split}
\]
For convenience we introduce the \emph{Hugoniot function} as follows
\begin{equation}
\begin{split}
\varPhi_k(p,\rho) &:=\varGamma_k(\rho_k)\rho_k (p-h_k(\rho)) -
\varGamma_k(\rho) \rho (p_k-h_k(\rho_k))\\
&-\dfrac{1}{2}\varGamma_k(\rho_k)(p+p_k)\varGamma_k(\rho)
(\rho-\rho_k),
\end{split}
\label{eq:phi}
\end{equation}
then the relation \eqref{eq:eose} boils down to the algebraic equation
$\varPhi_k(p^*,\rho_k^*)=0$. The derivative of $\varPhi_k$ with respect to the
density is found to be
\begin{equation}
\begin{split} 
\dfrac{\partial\varPhi_k}{\partial\rho}(p,\rho) 
&= -\varGamma_k(\rho_k)\rho_k
\left(h_k'(\rho)-\dfrac{p+p_k}{2}\varGamma_k'(\rho)\right)\\
&-\varGamma_k(\rho_k)(\varGamma_k(\rho)+\rho\varGamma_k'(\rho))
\left(\rho_ke_k+\dfrac{p+p_k}{2}\right).
\end{split}
\label{eq:phide}
\end{equation}
As a result, the slope of the Hugoniot locus can be found by the method of
implicit differentiation, namely,
\[
\chi(p,\rho) := \left.\dfrac{\partial p}{\partial \rho}\right|_{\varPhi_k}=
-\dfrac{2\partial\varPhi_k(p,\rho)/\partial\rho}{
\varGamma_k(\rho_k)(2\rho_k-{\varGamma_k(\rho)}(\rho-\rho_k))}.
\]
Before we discuss the properties of the Hugoniot function $\varPhi_k(p,\rho)$,
let us introduce the compressive limit of the density $\rho_{\max}$ such that
$\chi(p,\rho_{\max})=\infty$. By definition $\rho_{\max}$ solves the algebraic
equation $2\rho_k-\varGamma_k(\rho)(\rho-\rho_k)=0$. This quantity is uniquely
defined since the function
\[
W(\rho):=\left(\dfrac{\rho}{\rho_{k}}-1\right)\varGamma_k(\rho)-2,
\]
is monotonically increasing in the interval $(\rho_k,+\infty)$ by the condition
\textbf{(C1)}, and 
\[
W(\rho_k)=-2,\quad
\lim_{\rho\rightarrow+\infty}W(\rho)
\ge \lim_{\rho\rightarrow+\infty}
\left(\dfrac{\rho}{\rho_{k}}-1\right)\varGamma_{\infty}-2=+\infty.
\]
We have the following results on the function $\varPhi_k(p,\rho)$
\begin{lemma}\label{thm:phi}
  Assume that the functions $\varGamma_k(\rho)$ and $h_k(\rho)$ satisfy the
conditions \textbf{(C1), (C2)} and \textbf{(C3)}. Then $\varPhi_k(p,\rho)$
defined in \eqref{eq:phi} satisfies the following properties: 
  \begin{itemize}
    \item[(1).] $\varPhi_k(p,\rho_k)>0$;
    \item[(2).] $\varPhi_k(p,\rho_{\max})<0$;
    \item[(3).] ${\partial\varPhi_k}(p,\rho)/{\partial \rho}<0$;
    \item[(4).] ${\partial^2\varPhi_k}(p,\rho)/{\partial\rho^2}<0$ if
      $\varGamma''_k(\rho)=0$.
  \end{itemize}
\end{lemma}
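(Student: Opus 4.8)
The plan is to verify the four assertions one at a time, relying on the explicit formula \eqref{eq:phi} for $\varPhi_k$, the explicit formula \eqref{eq:phide} for $\partial\varPhi_k/\partial\rho$, the sign inequalities \textbf{(C1)}--\textbf{(C3)}, and the physical admissibility of the reference state, i.e.\ $p_k>0$ and $e_k\ge 0$ (equivalently $p_k\ge h_k(\rho_k)$, using $\varGamma_k(\rho_k)\rho_k e_k=p_k-h_k(\rho_k)$). Two consequences of \textbf{(C1)}--\textbf{(C2)} that I will use repeatedly are that $\varGamma_k(\rho)\ge\varGamma_\infty>0$ (a nonincreasing function tending to $\varGamma_\infty$) and that $\rho\varGamma_k(\rho)$ is nondecreasing. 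Throughout, the pressure argument lies in the shock range $p>p_k$ and the density argument on the compressive branch $\rho\in[\rho_k,\rho_{\max}]$.

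Assertions (1) and (3) are essentially bookkeeping of signs. For (1), set $\rho=\rho_k$ in \eqref{eq:phi}: the last term vanishes and the first two collapse to $\varPhi_k(p,\rho_k)=\varGamma_k(\rho_k)\rho_k(p-p_k)$, which is positive. For (3), pull the factor $-\varGamma_k(\rho_k)<0$ out of \eqref{eq:phide} and note $\varGamma_k(\rho)+\rho\varGamma_k'(\rho)=(\rho\varGamma_k(\rho))'$; it then suffices to show
\[
\rho_k\left(h_k'(\rho)-\frac{p+p_k}{2}\varGamma_k'(\rho)\right)+(\rho\varGamma_k(\rho))'\left(\rho_k e_k+\frac{p+p_k}{2}\right)>0.
\]
Each summand is nonnegative: $h_k'\ge 0$ and $-\varGamma_k'\ge 0$ by \textbf{(C1)} and \textbf{(C3)}, $(\rho\varGamma_k(\rho))'\ge 0$ by \textbf{(C1)}, and $\rho_k e_k+\frac{p+p_k}{2}>0$. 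Strictness follows by cases: if $(\rho\varGamma_k(\rho))'>0$ the second product is strictly positive, while if $(\rho\varGamma_k(\rho))'=0$ then $\varGamma_k'(\rho)=-\varGamma_k(\rho)/\rho<0$, so the term $-\frac{p+p_k}{2}\rho_k\varGamma_k'(\rho)$ is strictly positive.

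Assertion (2) is where the real work lies, and I expect it to be the main obstacle. I would substitute $\rho=\rho_{\max}$ into \eqref{eq:phi}, use the defining relation $\varGamma_k(\rho_{\max})(\rho_{\max}-\rho_k)=2\rho_k$ to collapse the quadratic term, and replace $p_k-h_k(\rho_k)$ by $\varGamma_k(\rho_k)\rho_k e_k$; the surviving terms telescope to
\[
\varPhi_k(p,\rho_{\max})=-\varGamma_k(\rho_k)\rho_k\left(h_k(\rho_{\max})+\varGamma_k(\rho_{\max})\rho_{\max}\,e_k+p_k\right).
\]
Since the prefactor is positive, it remains to prove the bracket is positive. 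Writing $\varGamma_k(\rho_{\max})\rho_{\max}e_k=\lambda\,(p_k-h_k(\rho_k))$ with $\lambda:=\varGamma_k(\rho_{\max})\rho_{\max}/(\varGamma_k(\rho_k)\rho_k)\ge 1$ (because $\rho\varGamma_k(\rho)$ is nondecreasing and $\rho_{\max}>\rho_k$), the bracket becomes $h_k(\rho_{\max})-\lambda h_k(\rho_k)+(\lambda+1)p_k$; a short case split on the sign of $h_k(\rho_k)$, invoking $h_k(\rho_{\max})\ge h_k(\rho_k)$ (monotonicity of $h_k$), $p_k\ge h_k(\rho_k)$, $\lambda\ge 1$ and $p_k>0$, makes this strictly positive in every case. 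The delicate point, and the reason this is not pure sign-counting, is that the hypotheses pin $h_k(\rho_{\max})$ down only through monotonicity and it may legitimately be negative (the tension branch of the cold curve, e.g.\ the stiffened-gas constant $-\gamma p_\infty$); the trick is to keep the $\varGamma_k(\rho_{\max})\rho_{\max}e_k$ term rather than discard it, so that through $\lambda\ge 1$ its size compensates a possibly very negative $h_k(\rho_{\max})$.

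Finally, for (4), differentiating \eqref{eq:phide} once more yields
\[
\frac{\partial^2\varPhi_k}{\partial\rho^2}(p,\rho)=-\varGamma_k(\rho_k)\left(\rho_k h_k''(\rho)-\frac{p+p_k}{2}\rho_k\varGamma_k''(\rho)+(\rho\varGamma_k(\rho))''\left(\rho_k e_k+\frac{p+p_k}{2}\right)\right).
\]
When $\varGamma_k''(\rho)=0$ the middle term drops; moreover $(\rho\varGamma_k(\rho))''=2\varGamma_k'(\rho)+\rho\varGamma_k''(\rho)=2\varGamma_k'(\rho)$, and since \textbf{(C1)} forces $(\rho\varGamma_k(\rho))''\ge 0$ while $\varGamma_k'(\rho)\le 0$, necessarily $\varGamma_k'(\rho)=0$, so that term drops too. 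What remains is $-\varGamma_k(\rho_k)\rho_k h_k''(\rho)\le 0$ by \textbf{(C3)}, which gives the asserted sign (and is strictly negative wherever $h_k''>0$).
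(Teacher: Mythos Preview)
Your arguments for (1), (3) and (4) track the paper's essentially verbatim; in (4) you are in fact more careful than the paper, which writes down the second derivative and simply asserts ``which is negative if $\varGamma_k''(\rho)=0$'' without isolating the observation you make explicit, namely that $\varGamma_k''=0$ together with $(\rho\varGamma_k)''\ge 0$ and $\varGamma_k'\le 0$ forces $\varGamma_k'=0$, so the potentially wrong-signed $2\varGamma_k'$ contribution vanishes. (Your caveat about strict versus non-strict negativity is well taken and applies equally to the paper's statement.)

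Part (2) is where your route genuinely diverges. The paper substitutes $\rho=\rho_{\max}$, simplifies to
\[
\varPhi_k(p,\rho_{\max})=-\varGamma_k(\rho_k)\rho_k\bigl(p_k+h_k(\rho_{\max})\bigr)-\bigl(\varGamma_k(\rho_{\max})+2\bigr)\rho_k\bigl(p_k-h_k(\rho_k)\bigr),
\]
and then splits on the sign of $h_k(\rho_{\max})$; in the negative case it invokes the bound $\varGamma_k(\rho)\le\varGamma_\infty+2$ from \textbf{(C2)} to compare $\varGamma_k(\rho_k)h_k(\rho_{\max})$ with $(\varGamma_k(\rho_{\max})+2)h_k(\rho_k)$. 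Your rewriting via $\varGamma_k(\rho_{\max})+2=\varGamma_k(\rho_{\max})\rho_{\max}/\rho_k$ and $p_k-h_k(\rho_k)=\varGamma_k(\rho_k)\rho_k e_k$ collapses the expression to $-\varGamma_k(\rho_k)\rho_k\bigl(h_k(\rho_{\max})+\varGamma_k(\rho_{\max})\rho_{\max}e_k+p_k\bigr)$, and your $\lambda$-trick then closes the argument using only the monotonicity of $\rho\varGamma_k(\rho)$ from \textbf{(C1)} and of $h_k$ from \textbf{(C3)}. This is cleaner and shows that the upper bound $\varGamma_k\le\varGamma_\infty+2$ in \textbf{(C2)} is not actually needed for this part of the lemma; the paper's approach, by contrast, makes that dependence look essential.
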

\begin{proof}
	(1). By definition \eqref{eq:phi} we have
	\[
	\begin{split}
	\varPhi_k(p,\rho_k)&=\varGamma_k(\rho_k)\rho_k (p-h_k(\rho_k))-
	\varGamma_k(\rho_k) \rho_k (p_k-h_k(\rho_k)) \\
	&= \varGamma_k(\rho_k)\rho_k(p-p_k) > 0.
	\end{split}
	\]
	
	(2). Since the compressive limit of the density $\rho_{\max}$ satisfies 
	the relation $(\rho_{\max}-\rho_k)\varGamma_k(\rho_{\max})=2\rho_k$,
we have
	\begin{equation}
	\begin{split}
	&\varPhi_k(p,\rho_{\max})\\
	=&\varGamma_k(\rho_k)\rho_k 
	(p-h_k(\rho_{\max}))-
	\varGamma_k(\rho_{\max}) \rho_{\max} (p_k-h_k(\rho_k)) \\
	 -&
	\dfrac{1}{2}\varGamma_k(\rho_{\max})(\rho_{\max}
	-\rho_k)\varGamma_k(\rho_k)(p+p_k)\\
	=&-\varGamma_k(\rho_k)\rho_k(p_k+h_k(\rho_{\max})) -
	(\varGamma_k(\rho_{\max})+2)\rho_k(p_k-h_k(\rho_k)).
	\end{split}
	\label{eq:phimax}
	\end{equation}
	Obviously $\varPhi_k(p,\rho_{\max})<0$ if $h_k(\rho_{\max})\ge 0$. On 
	the other hand, suppose that $h_k(\rho_{\max})< 0$. Rewriting
	\eqref{eq:phimax} as
	\[
	\begin{split}
	\varPhi_k(p,\rho_{\max})&=
	-(\varGamma_k(\rho_k)+\varGamma_k(\rho_{\max})+2)\rho_kp_k\\
	&-\rho_k(\varGamma_k(\rho_k)h_k(\rho_{\max})
	-(\varGamma_k(\rho_{\max})+2)h_k(\rho_k)),
	\end{split}
	\]
	and using the inequality resulting from the conditions \textbf{(C2)}
and \textbf{(C3)} 
	\[
	\varGamma_k(\rho_k)h_k(\rho_{\max})
	\ge \varGamma_k(\rho_k)h_k(\rho_k)\ge 
    (\varGamma_\infty+2)h_k(\rho_k)\ge
	(\varGamma_k(\rho_{\max})+2)h_k(\rho_k),
	\]
	we conclude that $\varPhi_k(p,\rho_{\max})<0$.
	
	(3). This is an obvious result from the expression \eqref{eq:phide}.
	
	(4). The second derivative of $\varPhi_k(p,\rho)$ with respect to the
density is
	\[
	\begin{split}
	\dfrac{\partial^2\varPhi_k}{\partial\rho^2}(p,\rho)&= -\varGamma_k(\rho_k)\rho_k
	\left(h_k''(\rho)-\dfrac{p+p_k}{2}\varGamma_k''(\rho)\right)\\
	&-\varGamma_k(\rho_k)(2\varGamma_k'(\rho)
	+\rho\varGamma_k''(\rho))
	\left(\rho_ke_k+\dfrac{p+p_k}{2}\right),
	\end{split}
	\]
	which is negative if $\varGamma''_k(\rho)=0$. This completes the proof
of the whole theorem.
\end{proof}
Instantly by Lemma \ref{thm:phi}, the density $\rho$ can be uniquely 
determined from the equation $\varPhi_k(p,\rho)=0$ on the interval
$(\rho_k,\rho_{\max})$ for any fixed $p$. Also the Hugoniot curve is
monotonic due to $\chi>0$. Since the equation \eqref{eq:eose} uniquely
defines the interface density $\rho_k^*$ for a given value of $p^*$,
the right hand side of \eqref{eq:diffushock} can be regarded as a
function of the interface pressure $p^*$ alone, formally written as
\[
f_k(p^*)=\left((p^*-p_k)\left(\dfrac{1}{\rho_k}-\dfrac{1}{\rho^*_k}
\right)\right)^{1/2},\quad p^*>p_k.
\]

\item[-] {\bf Solution through a rarefaction wave}

If, on the other hand, $p^*\le p_k$, then the corresponding nonlinear
wave 
is a rarefaction wave, and the interface state $\bm U_k^*$ is
connected to 
the adjacent initial state $\bm U_k$ through a rarefaction curve.
Since the 
Riemann invariant
\[
u\pm \int \dfrac{1}{\rho c}\mathrm d p,
\]
is constant along the right-facing (left-facing) rarefaction curve,
we have
\begin{equation}
\mp(u^*-u_k)=\int_{p_k}^{p^*} \dfrac{1}{\rho c}\mathrm d p,
\label{eq:diffu}
\end{equation}
where the density $\rho$ is expressed in terms of $p$ by solving the isentropic
relation
\begin{equation}
\dfrac{\partial p}{\partial \rho}=c^2(p,\rho).
\label{eq:isenr}
\end{equation}
Similarly, the right hand side of \eqref{eq:diffu} can be expressed as a function of $p^*$ alone. Formally we define
\[
f_k(p^*) = \int^{p^*}_{p_k} \dfrac{1}{\rho c}\mathrm d p,\quad p^*\le p_k.
\]

\end{itemize}

Collecting both cases above, we have that 
\[
u^*-u_l=-f_l(p^*) \quad \text{ and } \quad u^*-u_r=f_r(p^*).
\] 
Therefore, the interface pressure $p^*$ is the zero of the
following \emph{pressure function}
\[
f(p) := f_l(p)+f_r(p) + u_r - u_l.
\]
And the interface velocity $u^*$ can be determined by
\[
u^*=\dfrac{1}{2}(u_l+u_r+f_r(p^*)-f_l(p^*)).
\]
Recall that the formula of the function $f_k(p)$ is given by
\[
f_k(p)=
\begin{cases}
\displaystyle\int^{p}_{p_k} \dfrac{1}{\rho c}\mathrm d p, & p\le p_k,\\
\left((p-p_k)\left(\dfrac{1}{\rho_k}-\dfrac{1}{\rho}\right)
\right)^{1/2}, & p>p_k,
\end{cases}
\]
where $\rho$ is determined through either the Hugoniot relation \eqref{eq:eose}
or the isentropic relation \eqref{eq:isenr} for a given $p$. We claim on
$f_k(p)$ that
\begin{lemma}\label{thm:propf}
  Assume that the conditions \textbf{(C1), (C2)} and \textbf{(C3)}
  hold for $\varGamma_k(\rho)$ and $h_k(\rho)$, the function $f_k(p)$ is
  monotonically increasing and concave, i.e.
  \[
  f_k'(p)>0 \quad \text{ and } \quad f_k''(p)<0,
  \]
  if the Hugoniot function is concave with respect to the density,
  i.e. ${\partial^2 \varPhi_k(p,\rho)}/{\partial \rho^2}<0$.
\end{lemma}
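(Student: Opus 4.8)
The plan is to treat the two branches of $f_k$ separately and then match them at the junction $p=p_k$, where both formulas give $f_k(p_k)=0$. On the rarefaction branch $p\le p_k$ the density $\rho=\rho(p)$ is governed by the isentrope \eqref{eq:isenr}, so $f_k'(p)=1/(\rho c)>0$ directly, and
\[
f_k''(p)=-\dfrac{1}{(\rho c)^2}\,\dfrac{\mathrm d(\rho c)}{\mathrm d p}.
\]
Along the isentrope $\mathrm d p/\mathrm d\rho=c^2$, while by definition $\mathscr G=c^{-1}\,\partial(\rho c)/\partial\rho|_s$, hence $\mathrm d(\rho c)/\mathrm d p=\mathscr G/c>0$ (recall that $\mathscr G>0$ under \textbf{(C1)}--\textbf{(C3)}); therefore $f_k''<0$ on $p\le p_k$.

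On the shock branch $p>p_k$ we have $f_k(p)^2=(p-p_k)\bigl(\rho_k^{-1}-\rho^{-1}\bigr)$ with $\rho=\rho_k^*(p)$ the unique root of $\varPhi_k(p,\rho)=0$ in $(\rho_k,\rho_{\max})$; by Lemma \ref{thm:phi} this root exists, is unique, and is a smooth function of $p$ since $\varPhi_{k,\rho}\neq 0$. Write $w(p):=\rho_k^{-1}-\rho^{-1}>0$ and $g:=f_k^2=(p-p_k)w$. Implicit differentiation of $\varPhi_k(p,\rho(p))=0$ gives $\rho'=-\varPhi_{k,p}/\varPhi_{k,\rho}$; here $\varPhi_{k,\rho}<0$ by Lemma \ref{thm:phi}(3), and $\varPhi_{k,p}=\tfrac12\varGamma_k(\rho_k)\bigl(2\rho_k-\varGamma_k(\rho)(\rho-\rho_k)\bigr)>0$ on $(\rho_k,\rho_{\max})$ since $W(\rho)<0$ there, so $\rho'>0$; consequently $w'=\rho'/\rho^2>0$ and $2f_kf_k'=g'=w+(p-p_k)w'>0$, i.e. $f_k'>0$. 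Differentiating once more,
\[
\rho''=-\dfrac{\varPhi_{k,pp}+2\varPhi_{k,p\rho}\rho'+\varPhi_{k,\rho\rho}(\rho')^2}{\varPhi_{k,\rho}},
\]
where $\varPhi_{k,pp}=0$ because $\varPhi_k$ is affine in $p$; a short computation from \eqref{eq:phi} gives $\varPhi_{k,p\rho}=-\tfrac12\varGamma_k(\rho_k)\bigl((\rho\varGamma_k(\rho))'-\rho_k\varGamma_k'(\rho)\bigr)\le 0$ by \textbf{(C1)}; and $\partial^2\varPhi_k/\partial\rho^2<0$ is precisely the hypothesis of the lemma. Hence the numerator is negative while $\varPhi_{k,\rho}<0$, so $\rho''<0$, and therefore $w''=\bigl(\rho\rho''-2(\rho')^2\bigr)/\rho^3<0$.

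It remains to carry the concavity of $w$ over to $f_k$. From $g=(p-p_k)w$ one gets $g'=w+(p-p_k)w'$ and $g''=2w'+(p-p_k)w''$, and since $f_k=g^{1/2}$ with $g>0$ one has $4g^{3/2}f_k''=2gg''-(g')^2$. Expanding,
\[
(g')^2-2gg''=\bigl(w-(p-p_k)w'\bigr)^2-2(p-p_k)^2\,w\,w''>0,
\]
using $w>0$ and $w''<0$; thus $f_k''<0$ on $p>p_k$. At the junction, $f_k(p_k)=0$ from both sides, and since $\varPhi_{k,\rho}(p_k,\rho_k)=-\varGamma_k(\rho_k)\rho_kc_k^2$ one finds $\rho'(p_k^+)=1/c_k^2$, hence $g\sim(p-p_k)^2/(\rho_kc_k)^2$ and $f_k'(p_k^\pm)=1/(\rho_kc_k)$; so $f_k\in C^1$, $f_k'$ is continuous and strictly decreasing on both sides, and the two one-sided conclusions combine into $f_k'>0$, $f_k''<0$.

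The main obstacle is the shock branch, and within it the sign $\rho''<0$: it requires controlling the mixed partial $\varPhi_{k,p\rho}$ — this is where \textbf{(C1)}, in the form $(\rho\varGamma_k)'\ge 0$ and $\varGamma_k'\le 0$, is used — and then invoking the hypothesis $\partial^2\varPhi_k/\partial\rho^2<0$, without which the sign of $\rho''$, and hence of $f_k''$, is not determined by \textbf{(C1)}--\textbf{(C3)} alone; this is exactly why the lemma must be stated conditionally. The subsequent transfer of concavity from $w(p)$ to $f_k=\bigl((p-p_k)w\bigr)^{1/2}$ is elementary but has to be routed through $g=f_k^2$, since the square root is not differentiable at $p_k$.
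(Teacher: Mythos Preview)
Your proof is correct and, on the shock branch, is essentially a reparametrization of the paper's argument: where the paper computes $f_k''$ directly in terms of the Hugoniot slope $\chi=\mathrm d p/\mathrm d\rho$ and its derivative $\partial\chi/\partial p|_{\varPhi_k}$, you work with the inverse function $\rho(p)$ and the auxiliary $w=1/\rho_k-1/\rho$, and the two expressions coincide via $w''=-\rho^{-2}\chi^{-2}\bigl(2/\rho+\partial\chi/\partial p|_{\varPhi_k}\bigr)$. The sign analysis is identical in content---both routes reduce to $\varPhi_{k,p\rho}\le 0$ (from \textbf{(C1)}) and the hypothesis $\varPhi_{k,\rho\rho}<0$---but your organization is cleaner: isolating $\rho''<0$, then $w''<0$, then the elementary identity $(g')^2-2gg''=\bigl(w-(p-p_k)w'\bigr)^2-2(p-p_k)^2ww''$ makes the ``direct observation'' the paper invokes fully explicit. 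You also supply the $C^1$ matching at $p=p_k$ (via $\chi(p_k,\rho_k)=c_k^2$), which the paper omits.
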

\begin{proof}
  The first and second derivatives of $f_k(p)$ can be found to be
  \[
    f_k'(p)=
    \begin{cases}
      \dfrac{1}{\rho c}, & p\le p_k, \\
      \dfrac{1}{2f_k(p)} \left(\dfrac{1}{\rho_k}-\dfrac{1}{\rho}
        +\dfrac{p-p_k}{\rho^2\chi}\right), & p>p_k,
    \end{cases}
  \]
  and
  \[
    f_k''(p)=
    \begin{cases}
      -\dfrac{\mathscr G}{\rho^2c^3}, &p\le p_k, \\
      \begin{aligned}
      &-\dfrac{1}{4f_k^3(p)}\left(
        \dfrac{2(p-p_k)^2}{\rho^2\chi^2}\left(\dfrac{1}{\rho_k}-\dfrac{1}{\rho
          }\right)
        \left(\dfrac{2}{\rho}+\left.\dfrac{\partial\chi}{\partial p}\right|_{\varPhi_k}\right)
        \right.\\
        &+
        \left.
        \left(\dfrac{1}{\rho_k}-\dfrac{1}{\rho}-\dfrac{p-p_k}{\rho^2\chi}
        \right)^2
      \right)
      , 
      \end{aligned}
      & p>p_k,
    \end{cases}
  \]
  where $\mathscr G$ is the fundamental derivative \eqref{eq:fundd} and
  \[
  \left.\dfrac{\partial\chi}{\partial p}\right|_{\varPhi_k}
  =
  \left(\dfrac{\partial\varPhi_k}{\partial\rho}(p,\rho)\right)^{-1}
  \left(
    \dfrac{\partial^2\varPhi_k}{\partial\rho^2}(p,\rho)+\chi\varGamma_k(\rho_k)
    (\rho_k\varGamma_k'(\rho)-(\rho\varGamma_k(\rho))')
  \right).
  \]
  The result then follows by direct observation.
\end{proof}
The behavior of $f_k(p)$ is related to the existence and uniqueness of the
solution of the Riemann problem. The existence and uniqueness of the Riemann
solution for gas dynamics under appropriate conditions have been established by
Liu \cite{Liu1975} and by Smith \cite{Smith1979}. It is easy to show that the
conditions \textbf{(C1), (C2)} and \textbf{(C3)} imply Smith's ``strong''
condition $\partial e(p,\rho)/\partial \rho<0$. However, for completeness we
provide a short proof of the results for the case of Mie-Gr{\"u}neisen
EOS in the following theorem.
\begin{theorem}\label{thm:unique}
  Assume that the Mie-Gr{\"u}neisen EOS \eqref{eq:particulareos}
  satisfies the conditions \textbf{(C1), (C2)} and \textbf{(C3)}. The
  Riemann problem \eqref{system:oneriemann} admits a unique solution
  (in the class of admissible shocks, interfaces and rarefaction waves
  separating constant states) if and only if the initial states
  satisfy the constraint
  \begin{equation}
    u_r-u_l<\int_{0}^{p_l}\dfrac{\mathrm dp}{\rho c}+
    \int_{0}^{p_r}\dfrac{\mathrm dp}{\rho c}.
    \label{eq:vacuum}
  \end{equation}
\end{theorem}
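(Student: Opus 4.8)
The proof reduces to a study of the \emph{pressure function} $f(p)=f_l(p)+f_r(p)+u_r-u_l$ introduced above, whose positive zeros are precisely the admissible interface pressures $p^*$: each such zero determines $u^*$ and both nonlinear waves uniquely --- through Lemma~\ref{thm:phi} and the Hugoniot relation \eqref{eq:eose} on the shock branch, and through the isentropic relation \eqref{eq:isenr} on the rarefaction branch --- while the elementary-wave lemma established earlier rules out composite configurations. I would first note that, under \textbf{(C1), (C2), (C3)} alone, each $f_k$ is continuous on $(0,\infty)$, vanishes at $p_k$, and is \emph{strictly increasing}: on the rarefaction branch $f_k'(p)=1/(\rho c)>0$, while on the shock branch $f_k'(p)$ has the sign of $\tfrac{1}{\rho_k}-\tfrac{1}{\rho}+\tfrac{p-p_k}{\rho^2\chi}$, which is positive because Lemma~\ref{thm:phi} places the Hugoniot density $\rho=\rho_k^*(p)$ in the interval $(\rho_k,\rho_{\max})$ and forces $\chi(p,\rho)>0$ there. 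Only strict monotonicity, not the concavity of Lemma~\ref{thm:propf}, is needed here, so the argument stays within the hypotheses of the theorem. Consequently $f$ is continuous and strictly increasing on $(0,\infty)$ and has at most one zero, which already yields the uniqueness half of the statement once existence is settled.

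Next I would compute the two endpoint limits of $f$. As $p\to+\infty$, on the shock branch $\rho_k^*(p)$ increases monotonically and must tend to $\rho_{\max}$: otherwise the coefficient of the leading power of $p$ in $\varPhi_k(p,\cdot)$, namely $\tfrac12\varGamma_k(\rho_k)\bigl(2\rho_k-\varGamma_k(\rho)(\rho-\rho_k)\bigr)$, stays bounded away from $0$ and the identity $\varPhi_k(p,\rho_k^*(p))=0$ becomes impossible for large $p$. Hence $f_k(p)^2=(p-p_k)\bigl(\tfrac{1}{\rho_k}-\tfrac{1}{\rho_k^*(p)}\bigr)\to+\infty$ and therefore $f(+\infty)=+\infty$. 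As $p\to0^+$, on the rarefaction branch $f_k(p)=-\int_p^{p_k}\mathrm dq/(\rho c)\to-\int_0^{p_k}\mathrm dq/(\rho c)$, with the convention that this limit is $-\infty$ when the integral diverges; in all cases $f(0^+)=u_r-u_l-\bigl(\int_0^{p_l}\mathrm dq/(\rho c)+\int_0^{p_r}\mathrm dq/(\rho c)\bigr)$.

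The theorem then follows from the intermediate value theorem combined with strict monotonicity: $f$ possesses a (necessarily unique) zero in $(0,+\infty)$ if and only if $f(0^+)<0$, which is exactly the constraint \eqref{eq:vacuum}. When \eqref{eq:vacuum} is violated, $f>0$ throughout $(0,\infty)$, so no admissible interface pressure exists; the Riemann solution in that case develops an intermediate vacuum region separated from the two data states by rarefaction fans, which falls outside the admissible class in the statement.

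The step I expect to require the most care is the $p\to0^+$ analysis: one must verify that $f_k$ extends continuously to $p=0$ with the claimed (possibly infinite) limit, i.e. control the isentrope \eqref{eq:isenr} down to vanishing pressure --- showing $\rho$ decreases monotonically to some $\rho_\star\ge0$ and that the integrand $1/(\rho c)$, with $c^2=\partial p/\partial\rho$ expanded through the Mie-Gr{\"u}neisen sound-speed formula and bounded below using \textbf{(C2)} and \textbf{(C3)}, has at worst an integrable singularity at that endpoint. A minor, routine point is the continuity (in fact $C^1$ matching) of $f_k$ across the junction $p=p_k$ between the two branches, which follows from $\rho_k^*(p)\to\rho_k$ as $p\to p_k^+$.
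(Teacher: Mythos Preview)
Your proof is correct and follows essentially the same route as the paper: strict monotonicity of $f$, the limit $f(+\infty)=+\infty$, the value $f(0^+)$, and the intermediate value theorem. The only notable difference is that for $p\to+\infty$ the paper does not bother to show $\rho_k^*(p)\to\rho_{\max}$; it simply fixes $\tilde\rho$ with $\varPhi_k(2p_k,\tilde\rho)=0$ and uses the monotonicity of the Hugoniot curve to get $f_k(p)^2>(p-p_k)(1/\rho_k-1/\tilde\rho)$ for $p>2p_k$, which is a shorter way to the same conclusion.
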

\begin{proof}
  By definition and Lemma \ref{thm:propf} we know that the pressure
  function $f(p)$ is monotonically increasing.  Next we study the
  behavior of $f(p)$ when $p$ tends to infinity. Let $\tilde\rho$
  represents the density such that $\varPhi_k(2p_k,\tilde\rho)=0$.
  When the pressure $p>2p_k$, we have $\rho>\tilde{\rho}$, and thus
  \[
  f_k^2(p)=(p-p_k)\left(\dfrac{1}{\rho_k}-\dfrac{1}{\rho}\right)
  >(p-p_k)\left(\dfrac{1}{\rho_k}-\dfrac{1}{\tilde\rho}\right).
  \]
  As a result, $f_k(p)$ tends to positive infinity as
  $p\rightarrow +\infty$ and so does $f(p)$.
  
  Based on the behavior of the function $f(p)$, a necessary and
  sufficient condition for the interface pressure $p^*>0$ such that
  $f(p^*)=0$ to be uniquely defined is given by
  \[
  f(0)=f_l(0)+f_r(0)+u_r-u_l<0,
  \]
  or equivalently, the constraint given by \eqref{eq:vacuum}. This
  completes the proof.
\end{proof}

\begin{remark}
  When the initial states violate the constraint \eqref{eq:vacuum},
  the Riemann problem \eqref{system:oneriemann} has no solution in the
  above sense. One can yet define a solution by introducing a
  \emph{vacuum}. However, we are not going to address this issue
  since it is beyond the scope of our current interests.
\end{remark}


\section{Approximate Riemann Solver}
\label{sec:aps}

The solution of the Riemann problem \eqref{system:oneriemann} is obtained by
finding the unique zero $p^*$ of the function $f(p)$. A first try to this
problem is to use the Newton-Raphson iteration as
\begin{equation}
p_{n+1}=p_n-\dfrac{f(p_n)}{f'(p_n)}
=p_n-\dfrac{f_l(p_n)+f_r(p_n) + u_r -
u_l}{f_l'(p_n)+f_r'(p_n)},
\label{eq:iterp1}
\end{equation}
with a suitable initial estimate which we may choose as, for example, the
acoustic approximation \cite{Godunov1976}
\[
p_0 =
\dfrac{\rho_lc_lp_r+\rho_rc_rp_l+\rho_lc_l\rho_rc_r(u_l-u_r)}
{\rho_lc_l+\rho_rc_r}.
\]
The concavity of the pressure function $f(p)$ leads to the following global
convergence of the Newton-Raphson iteration
\begin{corollary}
  The Newton-Raphson iteration for \eqref{eq:iterp1} converges if
  $\varGamma_l''(\rho)=\varGamma_r''(\rho)=0$.
\end{corollary}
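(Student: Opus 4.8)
The plan is to deduce everything from the concavity and monotonicity of the pressure function $f$ and then appeal to the classical monotone behaviour of Newton's iteration for such functions. First I would note that $\varGamma_l''(\rho)=\varGamma_r''(\rho)=0$ is precisely the hypothesis under which part~(4) of Lemma~\ref{thm:phi} applies, so that $\partial^2\varPhi_k(p,\rho)/\partial\rho^2<0$ for both $k=l,r$. Feeding this into Lemma~\ref{thm:propf} gives $f_k'(p)>0$ and $f_k''(p)<0$ for $k=l,r$, and hence, from $f(p)=f_l(p)+f_r(p)+u_r-u_l$, that $f$ is $C^2$, strictly increasing and strictly concave on the physical range $p\in(0,+\infty)$. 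Theorem~\ref{thm:unique} (whose hypothesis \eqref{eq:vacuum} I assume, since otherwise the Riemann problem has no solution and there is nothing to converge to) then furnishes a unique zero $p^*>0$ of $f$, together with $f(0^+)<0$ and $f(p)\to+\infty$ as $p\to+\infty$.

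The heart of the argument is an elementary fact about Newton's method for a $C^2$, strictly increasing, strictly concave function possessing a root $p^*$, which I would state and prove as a short self-contained step since it is the only place where the specific form of the iteration \eqref{eq:iterp1} is used: if an iterate satisfies $0<p_n\le p^*$, then $f(p_n)\le 0$, so $p_{n+1}=p_n-f(p_n)/f'(p_n)\ge p_n>0$; moreover strict concavity forces the tangent line to the graph of $f$ at $p_n$ to lie above the graph, so evaluating it at $p^*$ yields $0=f(p^*)\le f(p_n)+f'(p_n)(p^*-p_n)$, whence $p_{n+1}\le p^*$. Thus, once the iteration enters $(0,p^*]$ it remains there and is monotonically non-decreasing, hence convergent; continuity of $f$ and $f'$ and the bound $f'>0$ force the limit $\bar p$ to satisfy $f(\bar p)=0$, and uniqueness gives $\bar p=p^*$.

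It then remains only to ensure that the iteration reaches $(0,p^*]$, and this is the one delicate point. If the initial estimate already satisfies $0<p_0\le p^*$ — which can always be arranged, for instance by taking the initial guess $p_0$ sufficiently small (legitimate since $p^*>0$ and $f(0^+)<0$), or by lowering the acoustic guess to $\min\{p_0,p_l,p_r\}$ when at least one wave is a shock — we are immediately in the situation above. If instead $p_0>p^*$, the same tangent-above-graph estimate gives $p_1\le p^*$; the subtlety is that a very flat tangent far to the right could in principle produce $p_1\le 0$, i.e.\ step outside the physical domain, so the statement should be read together with the usual safeguard $p_1\leftarrow\max\{\varepsilon,p_1\}$ (equivalently, with a sufficiently good initial guess). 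I expect this domain-control issue to be the main obstacle to a fully unconditional statement; modulo it, the proof is just the chain part~(4) of Lemma~\ref{thm:phi} $\Rightarrow$ Lemma~\ref{thm:propf} $\Rightarrow$ strict concavity and monotonicity of $f$ $\Rightarrow$ monotone convergence of the Newton iteration to $p^*$.
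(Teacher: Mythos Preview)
Your proposal is correct and follows exactly the route the paper takes: it simply asserts that the concavity of $f$ (obtained via Lemma~\ref{thm:phi}(4) and Lemma~\ref{thm:propf}) yields global convergence of the Newton--Raphson iteration, without spelling out the monotone-iteration argument or the domain-control safeguard you mention. In fact you are more careful than the paper, which treats the corollary as immediate and gives no proof at all.
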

Unfortunately, there is no closed-form expression for the pressure function
$f(p)$ or its derivative $f'(p)$ for equations of state such as polynomial EOS,
JWL EOS or Cochran-Chan EOS. Therefore, we have to implement the iteration
\eqref{eq:iterp1} approximately. Here we adopt the \emph{inexact Newton method}
\cite{Dembo1982} instead, which is formulated as
\begin{equation}
\left\{
\begin{aligned}
p_{n+1}&=p_n-\dfrac{F_n}{F_n'}=p_n-\dfrac{F_{n,l}+F_{n,r}+u_r-u_l}
{F'_{n,l}+F'_{n,r}},\\
u_{n}&=\dfrac{1}{2}(u_l+u_r+F_{n,r}-F_{n,l}),
\end{aligned}\right.
\label{eq:iterp2}
\end{equation}
where $F_{n,k}$ and $F'_{n,k}$ approximate $f_k(p_{n})$ and $f_k'(p_{n})$,
respectively.

To specify the sequences $\{F_{n,k}\}$ and $\{F'_{n,k}\}$, we solve the Hugoniot
loci through the numerical iteration and the isentropic curves by the numerical
integration. It is natural to expect that the sequences $p_n$ and $u_n$ will
tend to $p^*$ and $u^*$ respectively whenever the evaluation errors
$|F_{n,k}-f_k(p_n)|$ and $|F'_{n,k}-f'_k(p_n)|$ are sufficiently small. As a
preliminary result, let us introduce the following Lemma \ref{the:conv_p}, which
states the local convergence of inexact Newton iterates
\begin{lemma}
  \label{the:conv_p}
  If the initial iterate $p_0$ is sufficiently close to $p^*$, and the 
  evaluation errors of $f(p_n)$ and $f'(p_n)$ satisfy the following 
  constraint
  \[
  2|F_n-f(p_n)|+|\Delta p_n|\cdot|F'_n-f'(p_n)|\le \eta |F_n|,
  \]
  where $\Delta p_n=p_{n+1}-p_n=-F_n/F'_n$ denotes the step increment
  and $\eta\in (0,1)$ is a fixed constant, then the sequence of
  inexact Newton iterates $p_n$ defined by
  \[
  p_{n+1}=p_n-\dfrac{F_n}{F'_n},
  \]
  converges to $p^*$. Moreover, the convergence is linear in the sense
  that $|p_{n+1}-p^*|\le \zeta |p_n-p^*|$ for $\zeta \in (\eta,1)$.
\end{lemma}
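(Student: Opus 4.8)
The plan is to prove this as a local convergence theorem for an inexact Newton iteration in the spirit of \cite{Dembo1982}, the one delicate point being how the hypothesis — phrased through the computed residual $|F_n|$ — is turned into a contraction estimate for $|p_n-p^*|$. First I would record what the hypothesis gives. Write $r_n:=F_n-f(p_n)$ and $s_n:=F_n'-f'(p_n)$, so the assumption reads $2|r_n|+|\Delta p_n|\,|s_n|\le\eta|F_n|$. If $f(p_n)=0$ at some step, this forces $r_n=0$, hence $p_n=p^*$ and the iteration has converged; so assume $f(p_n)\neq0$ throughout. Since $\Delta p_n=-F_n/F_n'$ we have $F_n+F_n'\Delta p_n=0$, so the residual of the linear model is
\[
\theta_n:=f(p_n)+f'(p_n)\Delta p_n=-r_n-s_n\Delta p_n ,
\]
with $|\theta_n|\le|r_n|+|\Delta p_n|\,|s_n|$. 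Using $|F_n|\le|f(p_n)|+|r_n|$ together with $\eta<1$, the hypothesis collapses to the clean bound $|\theta_n|\le\eta\,|f(p_n)|$; this is precisely what the coefficient $2$ is for, since with $|r_n|$ in place of $2|r_n|$ one would only reach $|\theta_n|\le\tfrac{\eta}{1-\eta/2}|f(p_n)|$ and so would be forced to require $\eta<2/3$.

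Next I would set up the local geometry. As $f$ is strictly increasing near $p^*$ with unique root $p^*$ (Lemma \ref{thm:propf}, Theorem \ref{thm:unique}) and twice continuously differentiable in a neighborhood of $p^*$ (each branch of $f_k$ is smooth, and the shock and rarefaction branches match to second order at $p_k$), so that in particular $f'(p^*)>0$, for any $\delta>0$ one may fix a closed interval $I$ symmetric about $p^*$ on which $|f'-f'(p^*)|\le\delta f'(p^*)$ and $|f''|\le M$; put $m:=(1-\delta)f'(p^*)>0$ and $M_1:=(1+\delta)f'(p^*)$. A second-order Taylor expansion of $f$ about $p_n$ evaluated at $p^*$,
\[
0=f(p^*)=f(p_n)+f'(p_n)(p^*-p_n)+\tfrac12 f''(\xi_n)(p_n-p^*)^2 ,
\]
with $\xi_n\in I$, combined with $f'(p_n)\Delta p_n=\theta_n-f(p_n)$ to eliminate $f(p_n)$, gives the key identity
\[
p_{n+1}-p^*=\frac{\theta_n}{f'(p_n)}+\frac{f''(\xi_n)}{2f'(p_n)}(p_n-p^*)^2 .
\]
Since $|f(p_n)|=|f(p_n)-f(p^*)|\le M_1|p_n-p^*|$, the bound $|\theta_n|\le\eta|f(p_n)|$ and the estimates on $I$ yield
\[
|p_{n+1}-p^*|\le\Big(\eta\,\frac{M_1}{m}+\frac{M}{2m}\,|p_n-p^*|\Big)\,|p_n-p^*| .
\]

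Finally I would close the argument by induction. Given a target rate $\zeta\in(\eta,1)$, choose $\delta$ small enough that $\zeta_0:=\eta(1+\delta)/(1-\delta)=\eta M_1/m<\zeta$ — possible because the left-hand side tends to $\eta$ as $\delta\to0$ — which fixes $I$, $m$, $M$. Then require $p_0\in I$ with $|p_0-p^*|$ small enough that $\tfrac{M}{2m}|p_0-p^*|\le\zeta-\zeta_0$ and that the interval of radius $|p_0-p^*|$ about $p^*$ lies in $I$. The estimate above gives $|p_1-p^*|\le\zeta|p_0-p^*|<|p_0-p^*|$, so $p_1\in I$ and the two smallness conditions are preserved; iterating, $|p_{n+1}-p^*|\le\zeta|p_n-p^*|$ for all $n$, every iterate stays in $I$, and $p_n\to p^*$ with the claimed linear rate. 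I expect the only real obstacle to be this last piece of bookkeeping — the familiar circularity whereby the bounds $m,M$ are valid only inside a neighborhood that the iterates must not leave — which is dissolved here by the geometric decrease of $|p_n-p^*|$ once the process is started close enough.
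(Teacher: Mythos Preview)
Your proof is correct and follows the same overall strategy as the paper: both derive from the hypothesis the key residual bound $|\theta_n|\le\eta\,|f(p_n)|$ (the paper's $r_n$ is your $\theta_n$), then convert this into a contraction for $|p_n-p^*|$ by local smoothness and induction. The only substantive difference is in how the remainder is handled: you use a second-order Taylor expansion with an explicit bound on $|f''|$, which is transparent but requires $f\in C^2$ near $p^*$ (hence your remark about second-order contact of the shock and rarefaction branches at $p_k$); the paper instead linearizes around $f'(p^*)$ and controls the first-order remainder $|f(p)-f(p^*)-f'(p^*)(p-p^*)|\le\gamma|f'(p^*)||p-p^*|$ together with $|f'(p)-f'(p^*)|$ and $|f'(p)^{-1}-f'(p^*)^{-1}|$, needing only continuity of $f'$. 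Your aside explaining the role of the coefficient $2$ in the hypothesis is a nice addition not present in the paper.
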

\begin{proof}
  Since $\eta<\zeta$, there exists $\gamma>0$ sufficiently small that
  $(1+\gamma)((\eta+2)\gamma+\eta)\le \zeta$.
  Choose $\epsilon>0$ sufficiently small that
  \begin{itemize}
  \item[(1).] $|f'(p)-f'(p^*)|\le \gamma |f'(p^*)|$;
  \item[(2).] $|f'(p)^{-1}-f'(p^*)^{-1}|\le \gamma |f'(p^*)|^{-1}$;
  \item[(3).] $|f(p)-f(p^*)-f'(p^*)(p-p^*)|\le \gamma |f'(p^*)||p-p^*|$,
  \end{itemize}
  whenever $|p-p^*|\le \epsilon$. Now we prove the convergence rate by
  induction. Let the initial solution satisfy $|p_0-p^*|\le
  \epsilon$. Suppose that $|p_n-p^*|\le \epsilon$, then
  \[
  \begin{split}
  |f(p_n)|&=|f(p_n)-f(p^*)-f'(p^*)(p_n-p^*)+f'(p^*)(p_n-p^*)| \\
  &\le (1+\gamma)|f'(p^*)||p_n-p^*|.
  \end{split}
  \]
  The error of $(n+1)$-th iterate can be written as
  \[
  \begin{split}
  p_{n+1}-p^*&= f'(p_n)^{-1}
  (r_n+(f'(p_n)-f'(p^*))(p_n-p^*)\\
  &-(f(p_n)-f(p^*)-f'(p^*)(p_n-p^*))),
  \end{split}
  \]
  where the residual $r_n=f'(p_n)\Delta p_n+f(p_n)$ satisfies 
  \[
  \begin{split}
    |r_n|&=|(f'(p_n)-F_n')\Delta p_n+f(p_n)-F_n|\\
    &=|(f'(p_n)-F_n')\Delta p_n+(1+\eta)(f(p_n)-F_n)
    -\eta(f(p_n)-F_n)|
    \le \eta |f(p_n)|.
  \end{split}
  \]
  Therefore
  \[
  \begin{split}
    |p_{n+1}-p^*|&\le  (1+\gamma)|f'(p^*)|^{-1}
    (\eta|f(p_n)|+\gamma |f'(p^*)||p_n-p^*| \\
    & + \gamma
    |f'(p^*)||p_n-p^*|)\\
    &\le (1+\gamma)(\eta(1+\gamma)+2\gamma)|p_n-p^*|\\
    &\le \zeta |p_n-p^*|,
  \end{split}
  \]
  and hence $|p_{n+1}-p^*|\le \epsilon$.
  It follows that $p_n$ converges linearly to $p^*$.
\end{proof}

Recall that $f(p)$ and $f'(p)$ can be decomposed as
\[
f(p)=f_l(p)+f_r(p)+u_r-u_l
\an 
f'(p)=f_l'(p)+f_r'(p).
\]
The following Theorem \ref{the:conv_pu} tells us that if $f_k$ and
$f_k'$ are evaluated accurately enough, the convergences of iterates
$p_n$ and $u_n$ are ensured.
\begin{theorem}\label{the:conv_pu}
  Suppose that the initial estimate $p_0$ is sufficiently close to
  $p^*$.  If the evaluation errors of $f_k(p_n)$ and $f'_k(p_n)$
  satisfy
  \[
  |F_{n,k}-f_k(p_n)|\le \dfrac{1}{6}\eta |F_n|
  \an 
  |F'_{n,k}-f_k'(p_n)|\le \dfrac{1}{6}\eta |F_n'|,
  \]
  where $\eta\in (0,1)$ is a constant.  Moreover, assume that the
  sequence $\{F_n'\}$ is bounded. Then the sequences of pressure and
  velocity defined in \eqref{eq:iterp2} converge, namely
  \[
  p_n\rightarrow p^*\an u_n\rightarrow u^*.
  \]
\end{theorem}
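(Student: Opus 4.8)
The plan is to reduce the convergence $p_n\to p^*$ directly to Lemma \ref{the:conv_p} --- whose single hypothesis on the evaluation errors of $f$ and $f'$ is, as I will check, a consequence of the sharper componentwise bounds assumed here --- and then to deduce $u_n\to u^*$ from $p_n\to p^*$ together with the continuity of $f_l$ and $f_r$.

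First I would exploit that the splittings $f=f_l+f_r+u_r-u_l$ and $f'=f_l'+f_r'$ are inherited verbatim by the approximations, so that $F_n-f(p_n)=(F_{n,l}-f_l(p_n))+(F_{n,r}-f_r(p_n))$ and likewise for the derivatives. The triangle inequality and the hypotheses then give $|F_n-f(p_n)|\le\frac{1}{3}\eta|F_n|$ and $|F_n'-f'(p_n)|\le\frac{1}{3}\eta|F_n'|$. Since $\Delta p_n=-F_n/F_n'$ --- and $F_n'\ne 0$, since the bound $|F_n'-f'(p_n)|\le\frac{1}{3}\eta|F_n'|$ would otherwise force $f'(p_n)=0$, contradicting the positivity of $f'$ from Lemma \ref{thm:propf} --- the elementary identity $|\Delta p_n|\,|F_n'|=|F_n|$ holds, whence
\[
2|F_n-f(p_n)|+|\Delta p_n|\,|F_n'-f'(p_n)|\le\frac{2}{3}\eta|F_n|+\frac{1}{3}\eta|F_n|=\eta|F_n|.
\]
This is precisely the constraint required in Lemma \ref{the:conv_p} (the constant $\frac{1}{6}$ in the hypotheses is chosen exactly so that this sum comes out to $\eta|F_n|$), so provided $p_0$ is sufficiently close to $p^*$ the lemma yields $p_n\to p^*$, and in fact linearly.

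For the velocity I would observe that $|F_n|\to 0$: from $|F_n|-|f(p_n)|\le|F_n-f(p_n)|\le\frac{1}{3}\eta|F_n|$ we get $|F_n|\le(1-\frac{1}{3}\eta)^{-1}|f(p_n)|$, and $f(p_n)\to f(p^*)=0$ by continuity of $f$ and $p_n\to p^*$. Hence $|F_{n,k}-f_k(p_n)|\le\frac{1}{6}\eta|F_n|\to 0$ for $k=l,r$, while $f_k(p_n)\to f_k(p^*)$ by continuity, so $F_{n,k}\to f_k(p^*)$; inserting this into $u_n=\frac{1}{2}(u_l+u_r+F_{n,r}-F_{n,l})$ and comparing with $u^*=\frac{1}{2}(u_l+u_r+f_r(p^*)-f_l(p^*))$ gives $u_n\to u^*$. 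I expect the only care really needed to be the bookkeeping that keeps the iterates inside the $\epsilon$-neighbourhood furnished by Lemma \ref{the:conv_p}: this is where the standing error hypotheses, the continuity of $f'$ (Lemma \ref{thm:propf}), and the assumed boundedness of $\{F_n'\}$ --- which bounds $|\Delta p_n|=|F_n|/|F_n'|$ below by a fixed multiple of $|F_n|$ and so prevents the steps from degenerating --- come into play. There is no substantive analytic obstacle beyond this.
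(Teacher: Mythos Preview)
Your proof is correct and follows essentially the same route as the paper: triangle inequality on the componentwise errors to get $|F_n-f(p_n)|\le\tfrac{1}{3}\eta|F_n|$ and $|F_n'-f'(p_n)|\le\tfrac{1}{3}\eta|F_n'|$, reduction to Lemma~\ref{the:conv_p} via $|\Delta p_n|\,|F_n'|=|F_n|$, and then continuity of $f_k$ to push $F_{n,k}\to f_k(p^*)$ and hence $u_n\to u^*$.

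The one substantive difference is how you obtain $F_n\to 0$. The paper writes $F_n=(p_n-p_{n+1})F_n'$ and invokes the assumed boundedness of $\{F_n'\}$ together with $p_n-p_{n+1}\to 0$; you instead use the reverse triangle inequality to get $|F_n|\le(1-\tfrac{1}{3}\eta)^{-1}|f(p_n)|$ and then $f(p_n)\to f(p^*)=0$ by continuity. Your argument is slightly cleaner here and in fact renders the boundedness hypothesis on $\{F_n'\}$ superfluous --- so your closing speculation that this hypothesis is needed to ``prevent the steps from degenerating'' is off the mark: in your version it plays no role at all, and in the paper's version it is used only at this one step.
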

\begin{proof}
  Since 
  \begin{align*}
    |F_n-f(p_n)|\le |F_{n,l}-f_l(p_n)|+ |F_{n,r}-f_r(p_n)|
    \le \dfrac{1}{3}\eta |F_n|,\\
    |F'_n-f'(p_n)|\le |F'_{n,l}-f'_l(p_n)|+
    |F'_{n,r}-f'_r(p_n)|
    \le \dfrac{1}{3}\eta |F'_n|,
  \end{align*}
  we have
  \[
  2|F_n-f(p_n)|+|\Delta p_n|\cdot|F'_n-f'(p_n)|\le \dfrac{2}{3}\eta
  |F_n|+ \left|\dfrac{F_n}{F'_n}\right|\cdot
  \dfrac{1}{3}\eta|F'_n|=\eta |F_n|.
  \]
  From Lemma \ref{the:conv_p} we conclude that $p_n\rightarrow p^*$.
  Due to the continuity of $f_k$ we have
  $f_k(p_n)\rightarrow f_k(p^*)$.  From $F_n=(p_n-p_{n+1})F_n'$ and
  the boundness of $\{F'_n\}$ we know that $F_n\rightarrow 0$.  It
  follows that
  \[
  |F_{n,k}-f_k(p^*)|\le 
  |F_{n,k}-f_k(p_n)|+|f_k(p_n)-f_k(p^*)|\rightarrow 0.
  \]
  Finally, by the definition of $u_n$ and $u^*$ we have
  \[
  |u_n-u^*|=\dfrac{1}{2}\left| 
    (F_{n,r}-f_r(p^*)) - (F_{n,l} - f_l(p^*))
  \right|\rightarrow 0,
  \]
  or $u_n\rightarrow u^*$.
\end{proof}

By Theorem \ref{the:conv_pu}, the convergence is guaranteed by \textit{a
posteriori} control on the evaluation errors of $f_k(p_n)$ and $f'_k(p_n)$. The
evaluation errors of $f_k(p_n)$ and $f'_k(p_n)$ depend on the residual of the
algebraic equation as well as the truncation error of the ordinary differential
equation. Therefore, to achieve better accuracy one may effectively reduce the
residual term and the truncation error. To achieve a trade-off between the
accuracy and efficiency in a practical implementation, we apply the
Newton-Raphson method to solve the Hugoniot loci, and the Runge-Kutta method to
solve the isentropic curves.

Precisely, for the given $n$-th iterator $p_n$, if $p_n>p_k$, then we
solve the following algebraic equation
\begin{equation}
\varPhi_k(p_n,\tilde\rho_{n,k})=0,
\label{eq:algphi}
\end{equation}
to obtain $\tilde\rho_{n,k}$ to a prescribed tolerance with the
Newton-Raphson method
\[
\rho_{n,k,m+1}=\rho_{n,k,m}- \dfrac{\varPhi_k(p_n,\rho_{n,k,m})}
{\partial\varPhi_k(p_n,\rho_{n,k,m})/\partial\rho}.
\]
By Lemma \ref{thm:phi}, we arrive at the following convergence result
\begin{corollary}
  The Newton-Raphson iteration for \eqref{eq:algphi} must converge if
  $\varGamma''_k(\rho)=0$.
\end{corollary}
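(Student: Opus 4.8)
The plan is to establish the standard monotone-convergence theorem for Newton's method applied to a convex (or concave) function with a unique zero, and then verify that the function $\varPhi_k(p_n,\cdot)$, with $p_n$ fixed, fits that template on the interval $(\rho_k,\rho_{\max})$. First I would invoke Lemma~\ref{thm:phi}: with $p = p_n > p_k$ fixed, part (1) gives $\varPhi_k(p_n,\rho_k) > 0$, part (2) gives $\varPhi_k(p_n,\rho_{\max}) < 0$, and part (3) gives $\partial\varPhi_k/\partial\rho < 0$ throughout, so the equation $\varPhi_k(p_n,\rho) = 0$ has a unique root $\tilde\rho_{n,k}$ in the open interval $(\rho_k,\rho_{\max})$. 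Crucially, under the hypothesis $\varGamma_k''(\rho) = 0$, part (4) gives $\partial^2\varPhi_k/\partial\rho^2 < 0$ on the whole interval, so $\varPhi_k(p_n,\cdot)$ is strictly decreasing and strictly concave there.

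Next I would run the textbook argument for Newton's method on a decreasing concave function. Since $\varPhi_k$ is concave in $\rho$, its graph lies below any tangent line; combined with $\varPhi_k' < 0$ this forces the Newton iterate $\rho_{n,k,m+1} = \rho_{n,k,m} - \varPhi_k(p_n,\rho_{n,k,m})/(\partial\varPhi_k/\partial\rho)$ to satisfy $\rho_{n,k,m+1} \ge \tilde\rho_{n,k}$ whenever $\rho_{n,k,m} < \tilde\rho_{n,k}$, and $\rho_{n,k,m+1} \le \rho_{n,k,m}$ whenever $\rho_{n,k,m} \ge \tilde\rho_{n,k}$ (because then $\varPhi_k \le 0$ and $\partial\varPhi_k/\partial\rho < 0$ make the correction nonpositive in magnitude relative to sign). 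Hence from any starting point at or above the root — in particular from a standard initialization such as $\rho_{n,k,0}$ taken slightly above $\tilde\rho_{n,k}$, or after one preliminary step that lands above it — the iterates form a monotonically decreasing sequence bounded below by $\tilde\rho_{n,k}$. A bounded monotone sequence converges, and its limit $\rho_\infty$ must satisfy $\varPhi_k(p_n,\rho_\infty)/(\partial\varPhi_k/\partial\rho)(p_n,\rho_\infty) = 0$; since the denominator is nonzero, $\varPhi_k(p_n,\rho_\infty) = 0$, so $\rho_\infty = \tilde\rho_{n,k}$ by uniqueness of the root.

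The main obstacle is the bracketing/initialization bookkeeping: Newton's method on a concave decreasing function converges monotonically from the "left" side (below the root) only after the first iterate, which always overshoots to the side where monotone descent is guaranteed; starting strictly above the root is immediately in the monotone regime, but starting below requires checking that the single overshoot still lands inside $(\tilde\rho_{n,k},\rho_{\max})$ and does not escape past $\rho_{\max}$ where $\chi$ blows up. I would handle this by noting that $\partial\varPhi_k/\partial\rho$ stays bounded away from zero on compact subintervals and that $\varPhi_k(p_n,\rho_{\max}) < 0$ gives a built-in barrier, so a safeguarded Newton step (or simply choosing $\rho_{n,k,0} \in (\tilde\rho_{n,k},\rho_{\max})$, e.g.\ $\rho_{n,k,0}$ close to $\rho_{\max}$) keeps the entire orbit in the interval and the convergence conclusion follows. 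Finally, local quadratic convergence is automatic from $\partial\varPhi_k/\partial\rho \ne 0$ and the smoothness of $\varPhi_k$, which can be remarked in one line.
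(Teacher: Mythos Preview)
Your proposal is correct and follows essentially the same approach as the paper: the paper states this corollary as an immediate consequence of Lemma~\ref{thm:phi} without further argument, and you have simply supplied the standard details (unique root from parts (1)--(3), strict concavity from part (4) under $\varGamma_k''=0$, then the textbook monotone convergence of Newton's method on a strictly decreasing concave function). Your discussion of the overshoot/initialization issue is more careful than anything in the paper, which leaves that point implicit.
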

The values of $F_{n,k}$ and $F'_{n,k}$ for the shock branch are thus
taken as
\begin{align}
\label{eq:shockf}
  F_{n,k}&=\left((p_n-p_k)\left(\dfrac{1}{\rho_k}-\dfrac{1}
           {\tilde\rho_{n,k}}\right)\right)^{1/2},\\ 
\label{eq:shockdf}
  F'_{n,k}&=\dfrac{1}{2F_{n,k}}\left(
            \dfrac{1}{\rho_k}-\dfrac{1}{\tilde\rho_{n,k}}
            +\dfrac{p_n-p_k}{\rho_{n,k}^2\chi(p_n,\tilde\rho_{n,k})}
            \right).
\end{align}

If, on the other hand, $p_n\le p_k$, then we solve the following
initial value problem
 \begin{align*}
   \dfrac{\mathrm d f_k}{\mathrm d p}=\dfrac{1}{\rho c}, \\
   f_k|_{p=p_k}=0,
\end{align*}
coupled with the initial value problem of the isentropic relations
 \begin{align*}
 \dfrac{\mathrm d\rho}{\mathrm d p}=\dfrac{1}{c^2},\\
 \rho|_{p=p_k}=\rho_k,
 \end{align*}
backwards until $p=p_n$ using fourth-order Runge-Kutta method. The
values of $F_{n,k}$ and $F'_{n,k}$ are then taken as
\begin{align}
  \label{eq:raref}
  F_{n,k} &= -\dfrac{1}{6}(p_k-p_n)
            \left(\dfrac{1}{Z_1}+\dfrac{2}{Z_2}
            +\dfrac{2}{Z_3}+\dfrac{1}{Z_4}
            \right), \\ 
  \label{eq:raredf}
  F'_{n,k} &=\dfrac{1}{\tilde\rho_{n,k} c(p_n,\tilde\rho_{n,k})},
\end{align}
where  
\[
  \tilde\rho_{n,k} = \rho_k-\dfrac{1}{6}(p_k-p_n)                
\left(\dfrac{1}{c_1^2}+\dfrac{2}{c_2^2}+\dfrac{2}{c_3^2}
                     +\dfrac{1}{c_4^2}\right),\\ 
\]
and the intermediate states are given by
\[
\begin{aligned}
  c_1&=c(p_k,\rho_k),& Z_1&=\rho_kc_1,\\
  c_2&=c\left(\dfrac{p_k+p_n}{2},\rho_k-\dfrac{p_k-p_n}{2c_1^2}\right),
  & Z_2&= \left(\rho_k-\dfrac{p_k-p_n}{2c_1^2}\right)c_2,\\
  c_3&=c\left(\dfrac{p_k+p_n}{2},\rho_k-\dfrac{p_k-p_n}{2c_2^2}\right),
  & Z_3&= \left(\rho_k-\dfrac{p_k-p_n}{2c_2^2}\right)c_3,  \\
  c_4&=c\left(p_n,\rho_k-\dfrac{p_k-p_n}{c_3^2}\right), & Z_4&=
  \left(\rho_k-\dfrac{p_k-p_n}{c_3^2}\right)c_4.
\end{aligned}
\]
Finally the procedure of the approximate Riemann solver for
\eqref{system:oneriemann} is listed below.
\begin{mdframed}
  \setlength{\parindent}{0pt}
  \textbf{Step 1} Provide initial estimates of the interface
  pressure
  \[
  p_0 =
  \dfrac{\rho_lc_lp_r+\rho_rc_rp_l+\rho_lc_l\rho_rc_r(u_l-u_r)}
  {\rho_lc_l+\rho_rc_r}.
  \]
  
  \textbf{Step 2} Assume that the $n$-th iterator $p_n$ is
  obtained. Determine the types of left and right nonlinear
  waves.
  
  (i) If $p_n>\max\{p_l,p_r\}$, then both nonlinear waves
  are shocks.
  
  (ii) If $\min\{p_l,p_r\}\le p_n \le \max\{p_l,p_r\}$, then
  one of the two nonlinear waves is a shock, and the other is a
  rarefaction wave.
  
  (iii) If $p_n<\min\{p_l,p_r\}$, then both nonlinear waves
  are
  rarefaction waves.
  
  \textbf{Step 3} Evaluate $F_{n,k}$ and $F'_{n,k}$ through
  \eqref{eq:shockf}~\eqref{eq:shockdf} or
  \eqref{eq:raref}~\eqref{eq:raredf}
  according to the wave structures and update the interface
  pressure 
  through
  \[
  p_{n+1}=p_n-\dfrac{F_{n,l}+F_{n,r} + 
    u_r - u_l}{F'_{n,l}+F'_{n,r}}.
  \]
  
  \textbf{Step 4} Terminate whenever the relative change of the
  pressure reaches the prescribed tolerance. The sufficiently accurate estimate
  $p_n$ is then taken as the approximate interface pressure $p^*$.
  Otherwise return to Step 2.
  
  \textbf{Step 5} Compute the interface velocity
  $u^*$ through
  \[
  u^*=\dfrac{1}{2}\left(u_l+u_r+F_{n,r} - F_{n,l}\right).
  \]
\end{mdframed}
\medskip

In the above algorithm the tolerances of the outer iteration for
pressure function and the inner iteration for Hugoniot function are
both set as $10^{-8}$. Although the evaluation errors of $f_k(p_n)$
and $f'_k(p_n)$ for the isentropic branch is not effectively
controlled in our implementation without applying a more accurate
numerical quadrature rule, we have never encountered any failure in
the convergence of inexact Newton iteration in the numerical tests.


\section{Application on Two-medium Flows}
\label{sec:model} 
We are considering the two-medium fluid flow described by an immiscible model in
the domain $\Omega$. Two fluids are separated by a sharp interface $\Gamma(t)$
characterized by the zero of the level set function $\phi(\bm x,t)$ which
satisfies
\begin{equation}
\dfrac{\partial \phi}{\partial t} +
\tilde u|\nabla\phi| = 0.
\label{eq:levelset:eov}
\end{equation}
Here $\tilde u$ denotes the normal velocity of the interface, which is
determined by the solution of multi-medium Riemann problem. And the normal direction
is chosen as the gradient of the level set function.
The region occupied by each fluid can be expressed in terms of the level set
function $\phi(\bm x,t)$
\[
\Omega^+(t):=\{\bm x\in\Omega~|~\phi(\bm x,t)> 0\}
\an
\Omega^-(t):=\{\bm x\in\Omega~|~\phi(\bm x,t)< 0\}.
\]
And the fluid in each region is governed by the Euler equations
\begin{equation} 
  \dfrac{\partial \bm{U}}{\partial t} + 
  \nabla\cdot \bm F(\bm U)= 
  \bm 0,\quad \bm x\in \Omega^\pm (t),
  \label{eq:euler} 
\end{equation}
where
\[
\bm{U}= 
\begin{bmatrix} 
\rho \\ 
\rho \bm u \\
E  
\end{bmatrix}\an
\bm{F(\bm U)}=
\begin{bmatrix}
\rho \bm u^\top \\ 
\rho \bm u\otimes \bm u+p\mathbf I\\
(E+p)\bm u^\top
\end{bmatrix}.
\]
Here $\bm u$ stands for the velocity vector, and other variables represent the
same as that in \eqref{system:oneriemann}. To provide closure for the above
Euler equations, we need to specify the equation of state for each fluid, i.e.
\[
p=
\begin{cases}
P^+(\rho,e), & \phi>0,\\
P^-(\rho,e), & \phi<0.
\end{cases}
\]
The specific forms of the corresponding equations of state are the
Mie-Gr\"uneisen EOS discussed in Section \ref{sec:riemann}.

The level set equation \eqref{eq:levelset:eov} and the Euler equations
\eqref{eq:euler} are coupled in the sense that the level set equation provides
the boundary geometry for the Euler equations, whereas the Euler equations
provide the interface motion for the level set equation. To solve the coupled
system, we use the numerical scheme following Guo \textit{et al.}
\cite{Guo2016}, which is implemented on an Eulerian grid. For completeness,
however, we briefly sketch the main steps of numerical scheme for the two-medium
flow therein. The approximate Riemann solver we proposed is applied to calculate
the numerical flux at the interface in the numerical scheme.

As a numerical scheme on an Eulerian grid, the whole domain $\Omega$ is divided
into a conforming mesh with simplex cells, and may be refined or coarsened
during the computation based on the $h$-adaptive mesh refinement strategy of Li
and Wu \cite{Li2013}. The whole scheme is divided into three steps:

\begin{itemize}
\item[(1).] {\bf Evolution of interface}

The level set function is approximated by a continuous piecewise-linear
function. The discretized level set function \eqref{eq:levelset:eov} is updated
through the characteristic line tracking method once the motion of the interface
is given. Due to the nature of level set equation, it remains to specify the
normal velocity $\tilde u$ within a narrow band near the interface. This can be
achieved by firstly solving a multi-medium Riemann problem across the interface
and then extending the velocity field to the nearby region using the harmonic
extension technique of Di \textit{et al.} \cite{Di2007}. The solution of the
multi-medium Riemann problem has been elaborated in Section \ref{sec:riemann}.

In order to keep the property of signed distance function, we solve the
following reinitialization equation 
\[
\begin{cases}
  \dfrac{\partial \psi}{\partial \tau} =
  \sgn(\psi_0)\cdot\left(1-\left|\nabla \psi \right|\right), \\
  \psi(\bm x,0)=\psi_0=\phi(\bm x, t),
\end{cases}
\]
until steady state using the explicitly positive coefficient scheme
\cite{Di2007}.

Once the level set function is updated until $n$-th time level, we can obtain
the discretized phase interface $\Gamma_{h,n}$. A cell $K_{i,n}$ is called an
\emph{interface cell} if the intersection of $K_{i,n}$ and $\Gamma_{h,n}$,
denoted as $\Gamma_{K_{i,n}}$, is nonempty. Since the level set function is
piecewise-linear and the cell is simplex, $\Gamma_{K_{i,n}}$ must be a linear
manifold in $K_{i,n}$. The interface $\Gamma_{h,n}$ further cuts the cell
$K_{i,n}$ and one of its boundaries $S_{ij,n}$ into two parts, which are
represented as $K_{i,n}^\pm$ and $S_{ij,n}^\pm$ respectively (may be an empty
set). The unit normal of $\Gamma_{K_{i,n}}$, pointing from $K_{i,n}^-$ to
$K_{i,n}^+$, is denoted as $\bm n_{K_{i,n}}$. These quantities can be readily
computed from the geometries of the interface and cells. See Fig.
\ref{fig:twophasemodel} for an illustration.

\begin{figure}[htb]
\centering
\includegraphics[width=.7\textwidth]{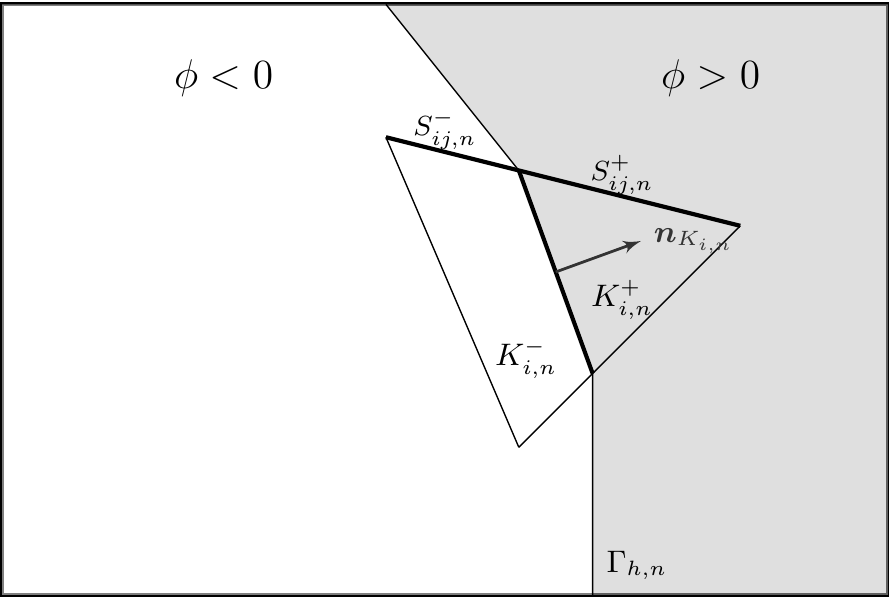}
\caption{Illustration of the two-medium fluid model.}
\label{fig:twophasemodel}
\end{figure}

\item[(2).] {\bf Numerical flux}

The numerical flux for the two-medium flow is composed of two parts: the cell
edge flux and the interface flux. Below we explain the flux contribution towards
any given cell $K_{i,n}$. We introduce two sets of flow variables at $n$-th time
level
\[
\bm U_{K_{i,n}}^\pm = \left[\begin{array}{c}
\rho_{K_{i,n}}^\pm \\ \rho_{K_{i,n}}^\pm\bm u_{K_{i,n}}^\pm \\
E_{K_{i,n}}^\pm 
\end{array}\right],
\]
which refer to the constant states in the cell $K_{i,n}^\pm$. Note that the flow
variables vanish if there is no corresponding fluid in a given cell. 

\begin{itemize}
\item {\bf Cell edge flux}

The cell edge flux is the exchange of flux between the same fluid across the
cell boundary. For any edge $S_{ij}$ of the cell $K_{i,n}$, let $\bm n_{ij,n}$
be the unit normal pointing from $K_{i,n}$ into the adjacent cell $K_{j,n}$. The
cell edge flux across $S_{ij,n}^\pm$ is calculated as
\begin{equation}\label{eq:cell_bound_flux}
  \hat{\bm F}_{ij,n}^\pm = \Delta t_n
  \left|S_{ij,n}^\pm \right| 
  \hat{\bm F} \left(
    \bm U_{K_{i,n}}^\pm, \bm U_{K_{j,n}}^\pm; 
    \bm n_{ij,n} \right),
\end{equation}
where $\Delta t_n$ denotes the current time step length, and $\hat{\bm F}(\bm
U_l, \bm U_r; \bm n)$ is a consistent monotonic numerical flux along $\bm n$.
Here we adopt the local Lax-Friedrich flux
\[
\hat{\bm F}(\bm U_l, \bm U_r; \bm n) = \dfrac{1}{2} \left(\bm F(\bm U_l) +
  \bm F(\bm U_r)\right) \cdot \bm n - \dfrac{1}{2}\lambda (\bm U_r - \bm U_l),
\]
where $\lambda$ is the maximal signal speed over $\bm U_l$ and $\bm U_r$.

\item {\bf Interface flux}
The interface flux is the exchange of the flux between two fluids due to the
interaction of fluids at the interface. If $K_{i,n}$ is an interface cell, then
the flux across the interface can be approximated by
\begin{equation}
\hat{\bm F}_{K_{i,n}}^{\pm}=
\Delta t_n\left|\Gamma_{K_{i,n}}\right|
\begin{bmatrix}
0 \\ p_{K_{i,n}}^*\bm n_{K_{i,n}} \\
p_{K_{i,n}}^*u_{K_{i,n}}^*
\end{bmatrix}.
\label{eq:phaseflux}
\end{equation}
Here $p_{K_{i,n}}^*$ and $u_{K_{i,n}}^*$ are the interface pressure and normal
velocity, which are obtained by applying the approximate solver we proposed in
Section \ref{sec:aps} to a local one-dimensional Riemann problem in the normal
direction of the interface with initial states
\[
\begin{split}
[\rho_l,u_l,p_l]^\top=
[\rho_{K_{i,n}}^-,
 \bm u_{K_{i,n}}^-\cdot \bm n_{K_{i,n}},
  p_{K_{i,n}}^-]^\top,\\
[\rho_r,u_r,p_r]^\top=
[\rho_{K_{i,n}}^+,
 \bm u_{K_{i,n}}^+\cdot \bm n_{K_{i,n}},
  p_{K_{i,n}}^+]^\top.
 \end{split}
\]
Here the pressure $p_{K_{i,n}}^\pm$ in the initial states are given
through the corresponding equations of state, namely
\[
p_{K_{i,n}}^\pm=P^\pm (\rho_{K_{i,n}}^\pm,e_{K_{i,n}}^\pm),\quad
e_{K_{i,n}}^\pm=\dfrac{E_{K_{i,n}}^\pm}{\rho_{K_{i,n}}^\pm}
-\dfrac{1}{2}\|\bm u_{K_{i,n}}^\pm\|^2.
\]
\end{itemize}

\item[(3).] {\bf Update of conservative variables}

Once the edge flux \eqref{eq:cell_bound_flux} and interface flux
\eqref{eq:phaseflux} are computed, the conservative variables at $(n+1)$-th 
time level is thus assigned as
\[
\bm U^{\pm}_{K_{i,n+1}} \!=\! \left\{\begin{array}{ll}
\bm 0, & K^\pm_{i,n+1} \!=\! \varnothing, \\
\dfrac{1}{|K^\pm_{i,n+1}|} \!
\left(|K^\pm_{i,n}|
\bm U^\pm_{K_{i,n}} \!+\! 
\displaystyle\sum_{S_{ij}^\pm\subseteq \partial K_{i,n}^\pm}
\hat{\bm F}_{ij,n}^\pm \!+\!
\hat{\bm F}^\pm_{K_{i,n}}\right), 
& K^\pm_{i,n+1} \!\neq\! \varnothing.
\end{array}\right.
\]
\end{itemize}

Basically, the steps we presented above may close the numerical scheme, while
there are more details in the practical implementation to gurantee the stability
of the scheme. Please see \cite{Guo2016} for those details.


\section{Numerical Examples}\label{sec:num}

In this section we present several numerical examples to validate our schemes,
including one-dimensional Riemann problems, spherically symmetric problems and
multi-dimensional shock impact problems. One-dimensional simulations are carried
out on uniform interval meshes, while two and three-dimensional simulations are
carried out on unstructured triangular and tetrahedral meshes respectively.

\subsection{One-dimensional problems}

In this part, we present some numerical examples of one-dimensional Riemann
problems. The computational domain is $[0,1]$ with $400$ cells. And the
reference solution, if mentioned, is computed on a very fine mesh with $10^4$
cells.

\subsubsection{Shyue shock tube problem}

This is a single-medium JWL Riemann problem used by Shyue \cite{Shyue2001}. The
parameters of JWL EOS \eqref{eq:jwl} therein are given by
$A_1=\unit[8.545\times10^{11}]{Pa}, A_2=\unit[2.050\times10^{10}]{Pa}$,
$\omega=0.25$, $R_1=4.6$, $R_2=1.35$ and $\rho_0=\unit[1840]{kg/m^3}$.
The initial conditions are
\[
 [\rho, u, p]^\top = \left\{
    \begin{array}{ll}
      [1700, ~0, ~10^{12}]^\top, &x<0.5,\\ [2mm]
      [1000, ~0, ~5.0\times 10^{10}]^\top, & x > 0.5.
    \end{array}
  \right.
\] 
The simulation terminates at $t=1.2\times 10^{-5}$. In Fig. \ref{res:rm_shyue},
the top panel shows the results of our numerical methods, while the bottom panel
contains the results of Shyue \cite{Shyue2001}. Our results agree well with the
highly resolved solution shown in a solid line given by Shyue.

\begin{figure}[htbp]
\centering
\subfloat
{\includegraphics[width=0.3\textwidth]{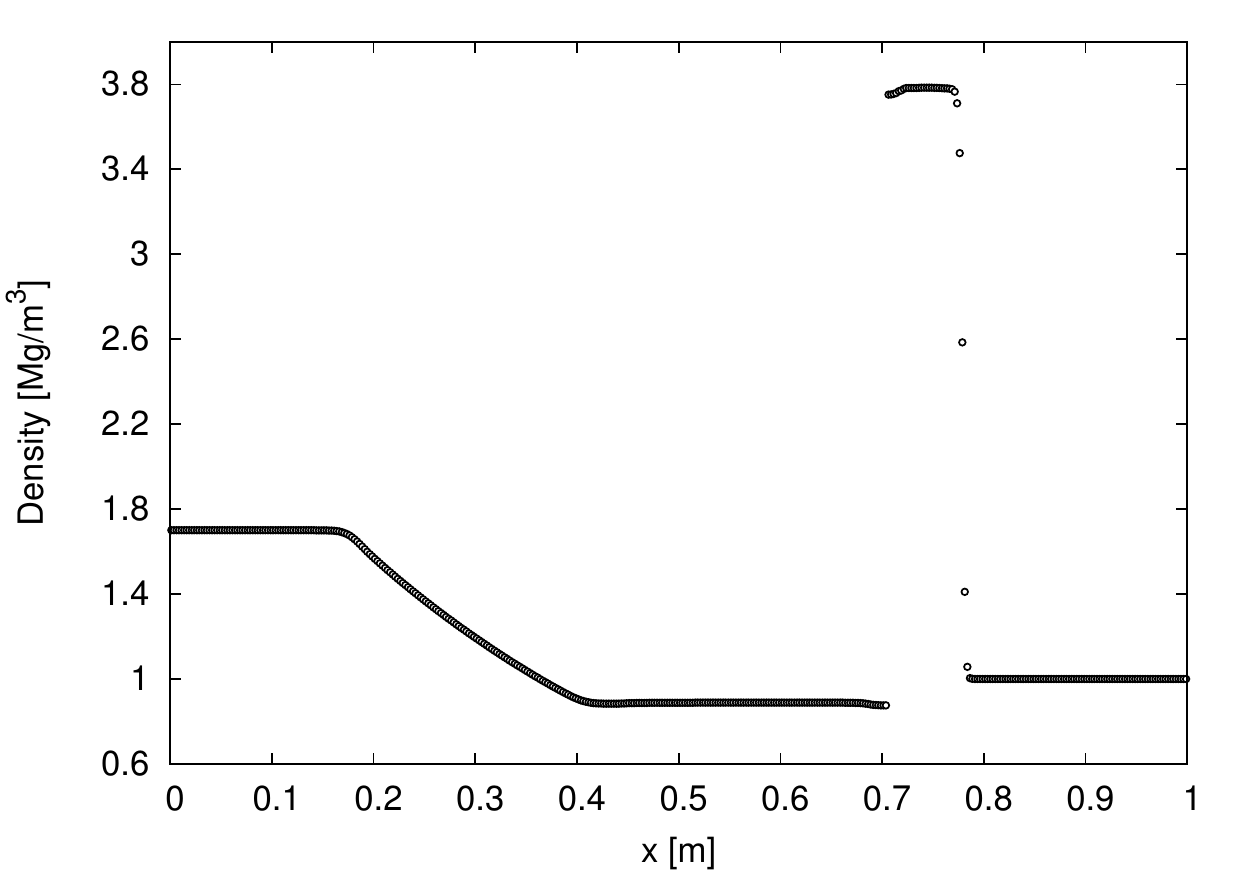}}
\subfloat
{\includegraphics[width=0.3\textwidth] {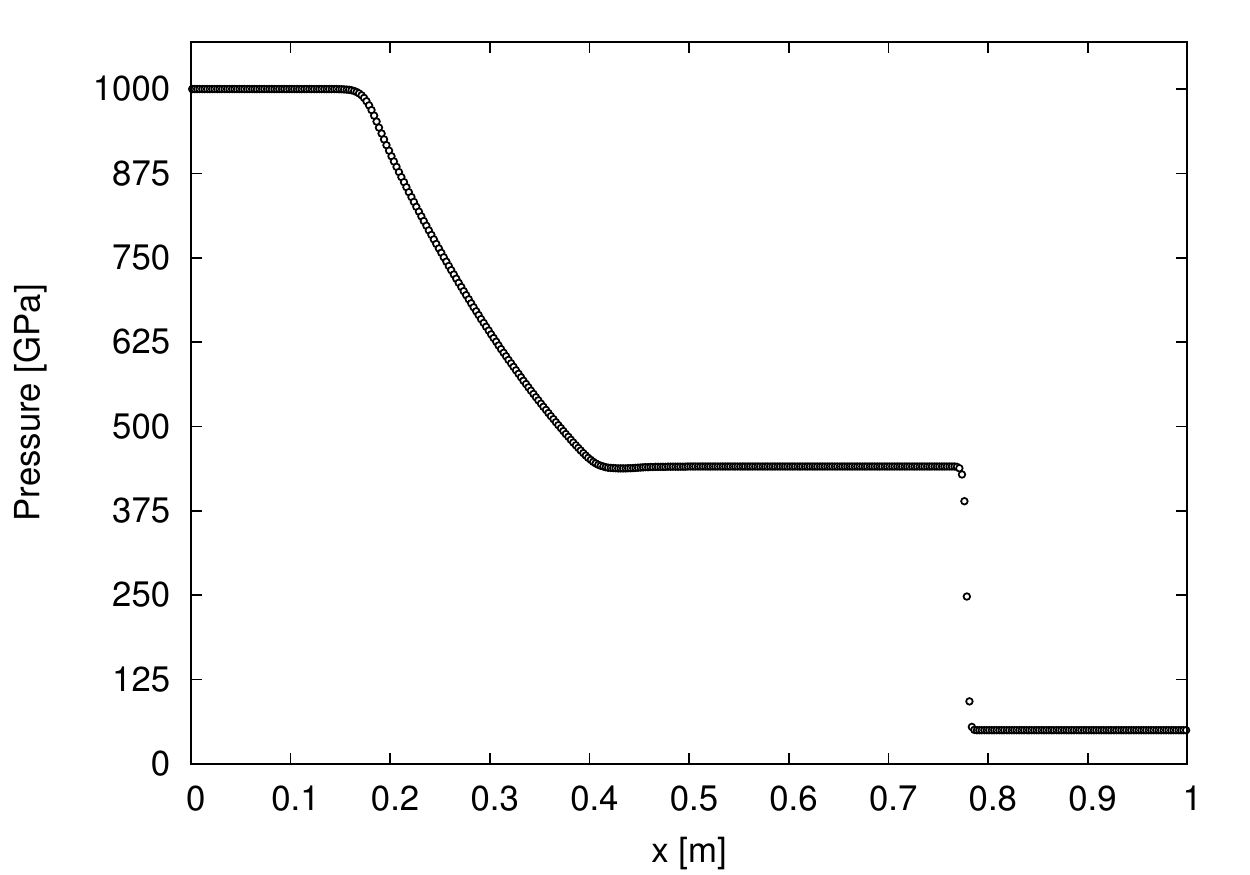}}
\subfloat
{\includegraphics[width=0.3\textwidth]{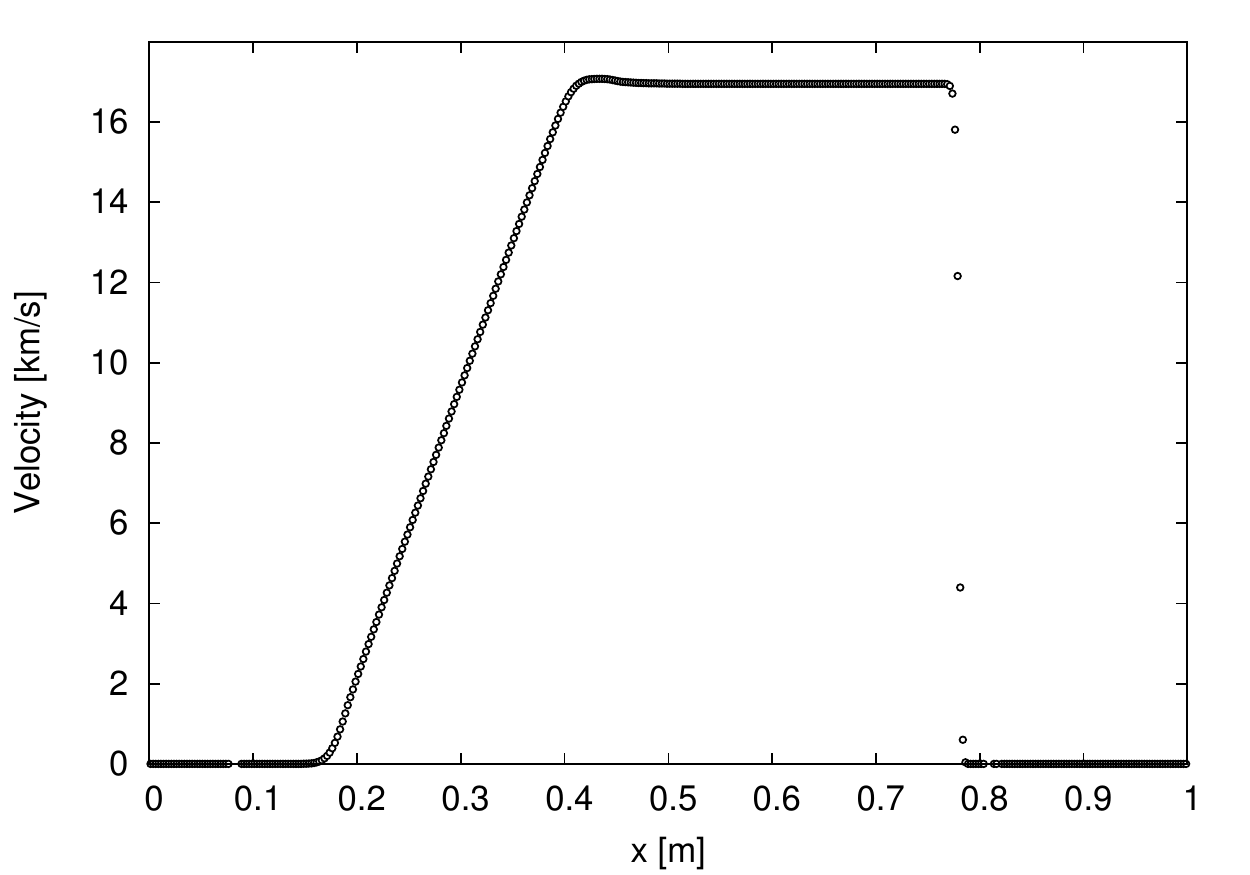}} \\
\addtocounter{subfigure}{-3}
\subfloat[Density]
{\includegraphics[width=0.3\textwidth]{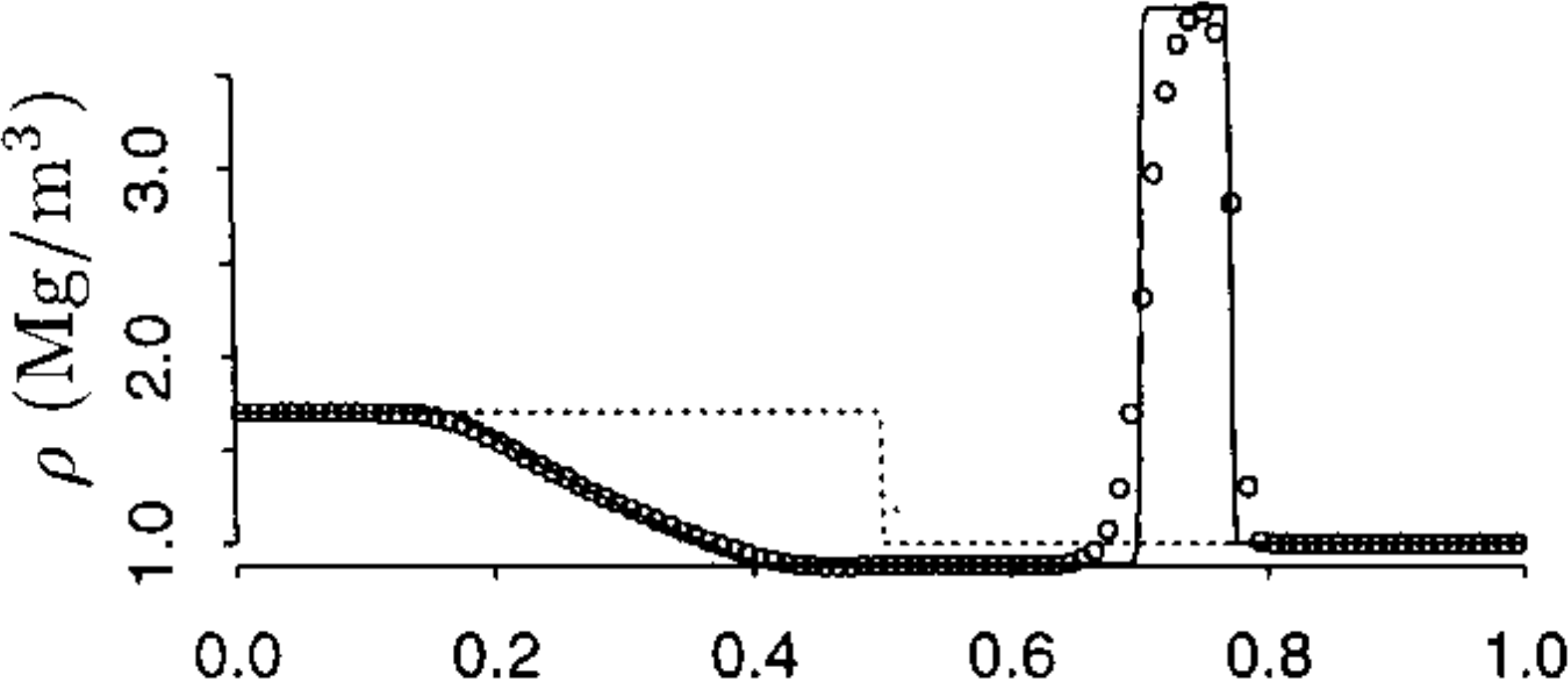}}
\quad
\subfloat[Pressure]
{\includegraphics[width=0.255\textwidth]{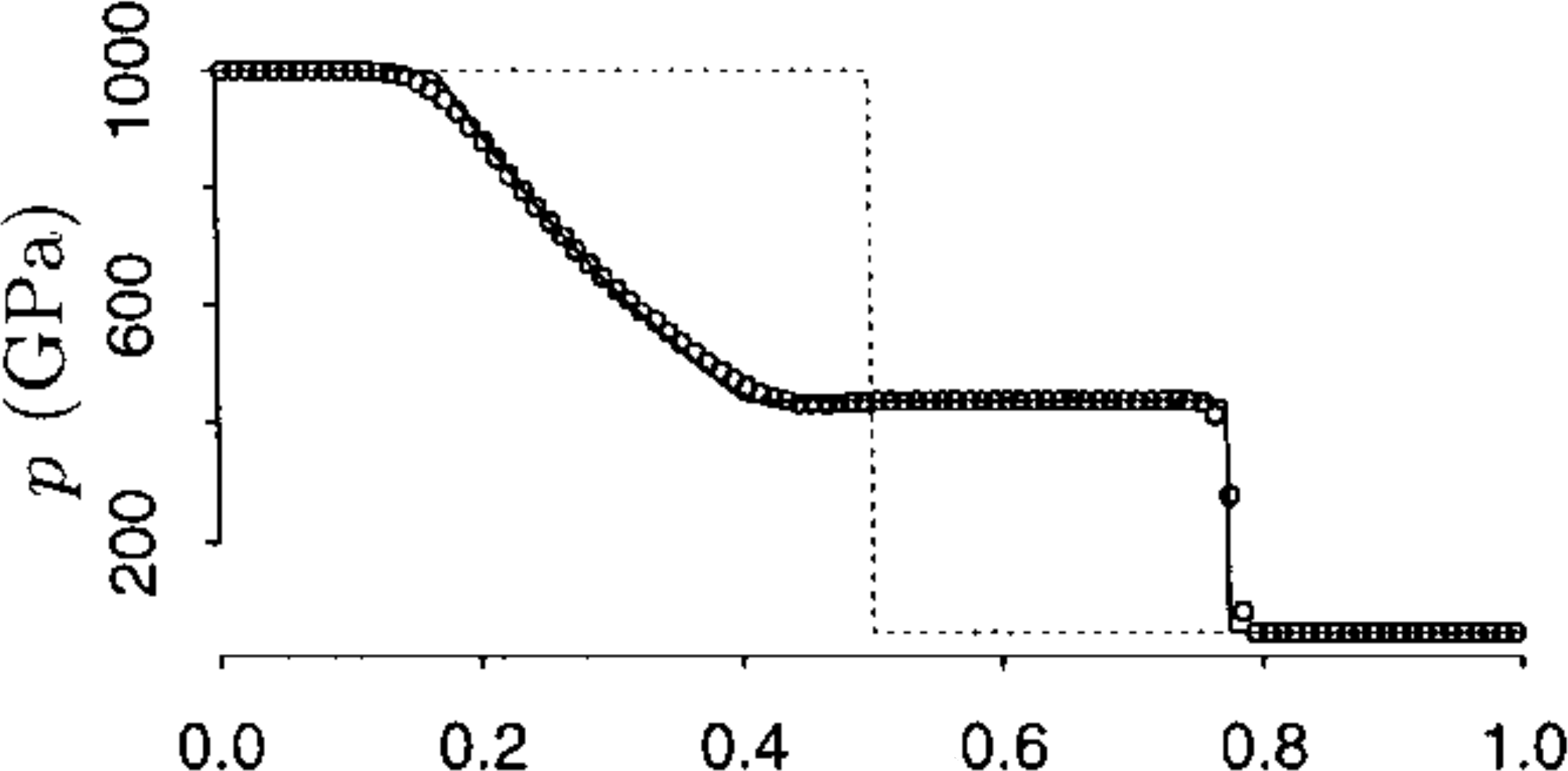}}
\quad
\subfloat[Velocity]
{\includegraphics[width=0.28\textwidth]{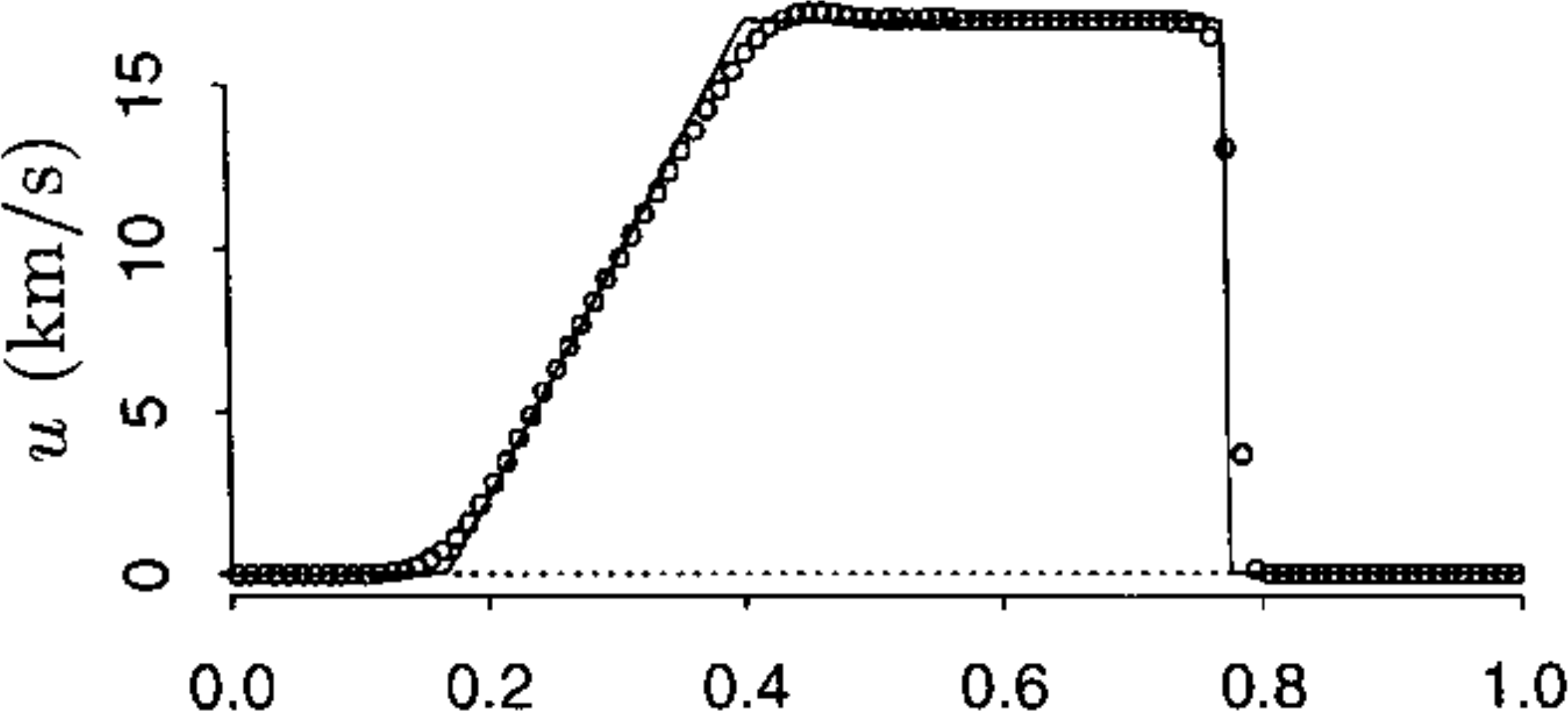}}
\caption{Shyue shock tube problem (top panel: our results, bottom panel:  
            results from Shyue \cite{Shyue2001}).}
\label{res:rm_shyue}
\end{figure}

\subsubsection{Saurel shock tube problem}

In this problem, we consider the advection of a density discontinuity of the
liquid nitromethane described by Cochran-Chan EOS \eqref{eq:CC} in a uniform
flow \cite{Saurel2007,Lee2013}. The parameters are given by
$A_1=\unit[8.192\times10^8]{Pa}$, $A_2=\unit[1.508\times10^9]{Pa},
\omega=1.19$, $R_1=4.53$, $R_2=1.42$ and $\rho_0=\unit[1134]{kg/m^3}$.  The
initial value is
\[
 [\rho, u, p]^\top = \left\{
    \begin{array}{ll}
      [1134, ~1000, ~2.0\times 10^{10}]^\top, &x<0.5,\\ [2mm]
      [500, ~1000, ~2.0\times 10^{10}]^\top, & x>0.5.
    \end{array}
  \right.
\]
The simulation terminates at $t=4.0\times 10^{-5}$. We use this Riemann problem
to  assess the performance of our methods against highly nonlinear equations of
state. Fig. \ref{res:rm_saurel} displays the results of our numerical scheme and
that of Saurel \textit{et al.} \cite{Saurel2007}, where we can see that there is
no non-physical pressure and velocity across the contact discontinuity in our
numerical scheme.

\begin{figure}[htbp]
\centering
\subfloat
{\includegraphics[width=0.3\textwidth]{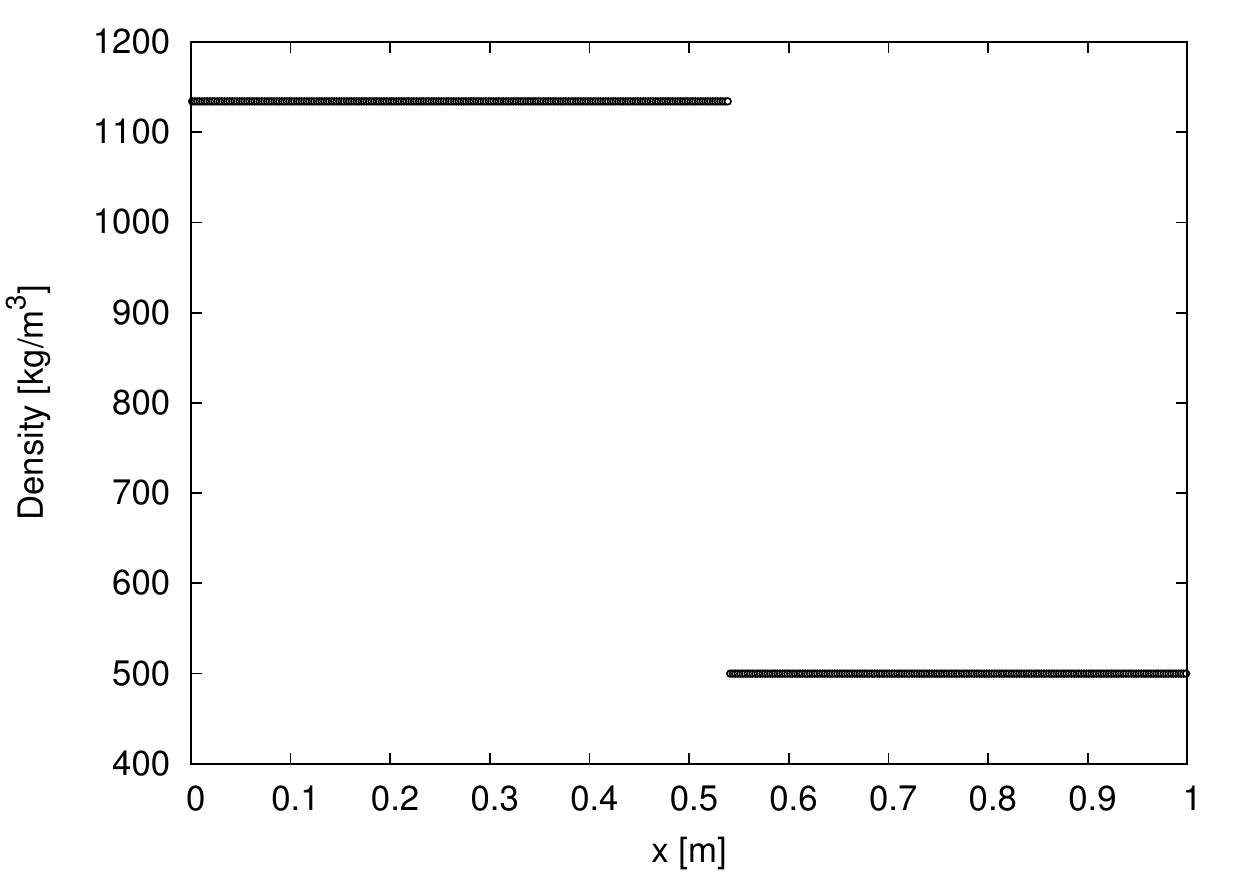}}
\subfloat
{\includegraphics[width=0.3\textwidth]{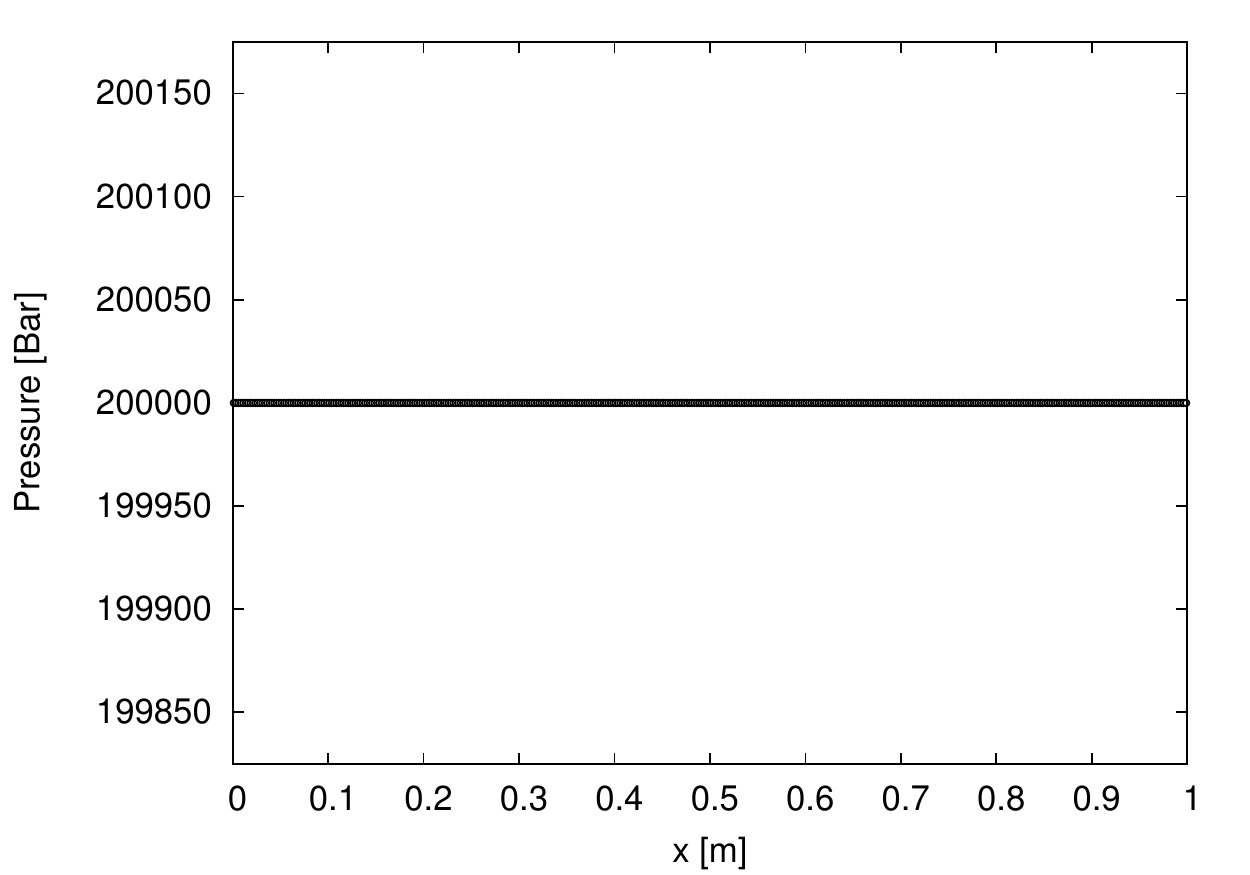}}
\subfloat
{\includegraphics[width=0.3\textwidth]{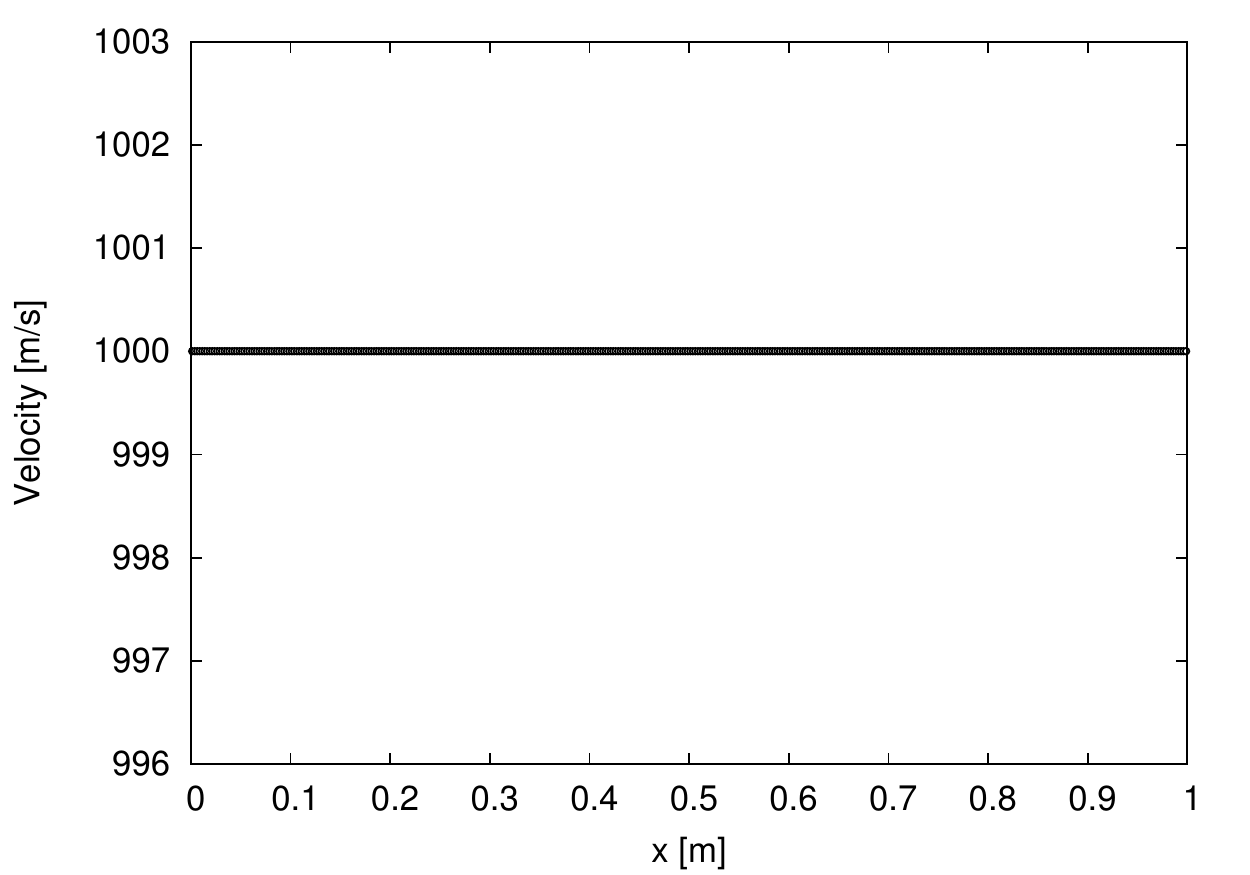}} \\
\addtocounter{subfigure}{-3}
\null\quad
\subfloat[Density]
{\includegraphics[width=0.26\textwidth]{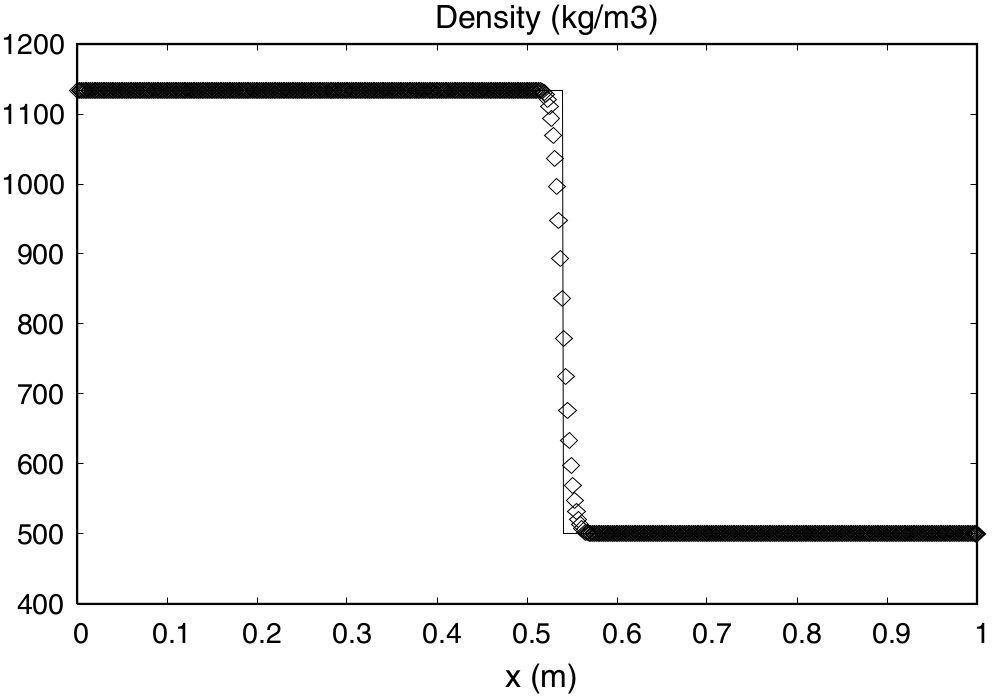}}
\quad
\subfloat[Pressure]
{\includegraphics[width=0.26\textwidth]{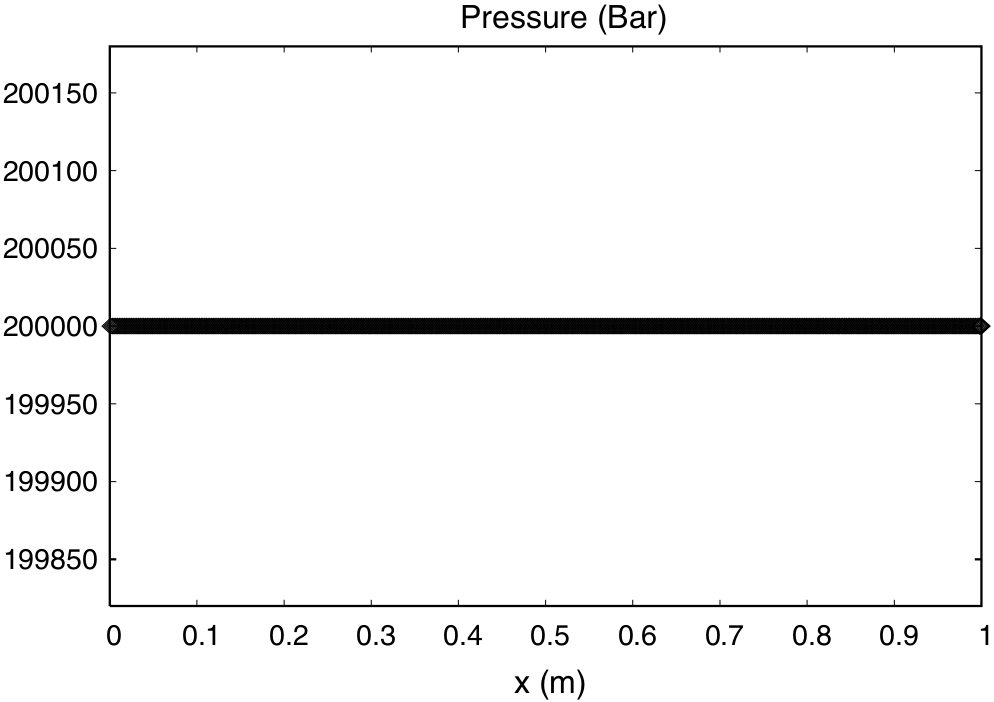}}
\quad 
\subfloat[Velocity]
{\includegraphics[width=0.26\textwidth]{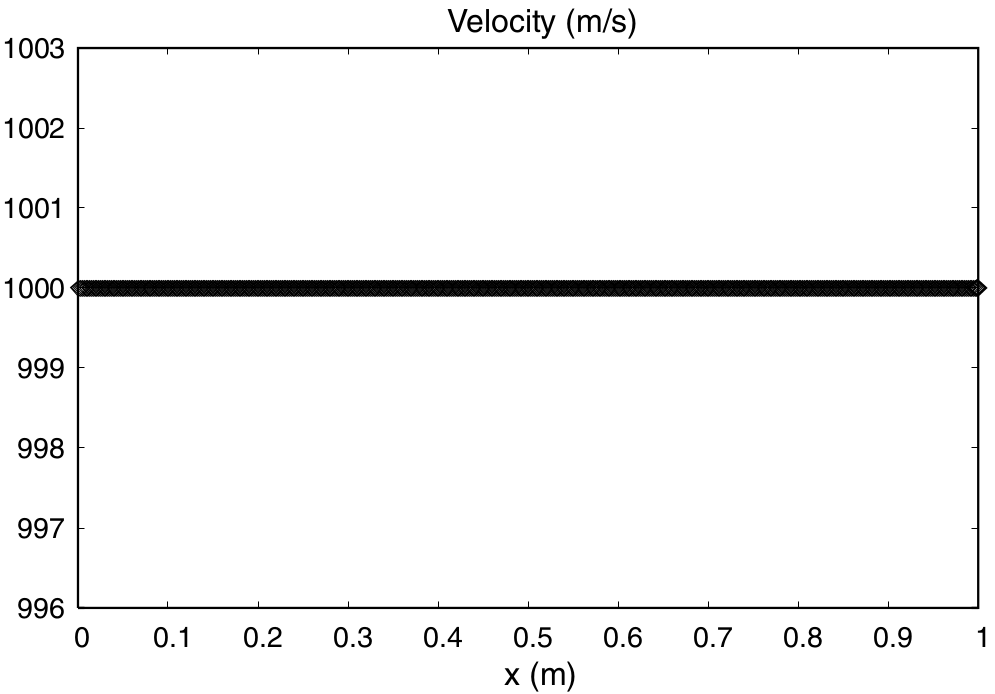}}
\caption{Saurel shock tube problem (top row: our results, bottom row: results
from Saurel \textit{et al.} \cite{Saurel2007}).} \label{res:rm_saurel}
\end{figure}

\subsubsection{Ideal gas-water Riemann problem}

In this example we simulate the ideal gas-water Riemann problems. The water is
modeled by either the stiffened gas EOS \eqref{eq:stiffened} or the polynomial
EOS \eqref{eq:poly}. The initial density, velocity and pressure are both
assigned with
\[
[\rho, u, p]^\top = \left\{
\begin{array}{ll}
[1630, ~0, ~7.0\times 10^9]^\top, &x<0.5,\\ [2mm]
[1000, ~0, ~1.0\times 10^5]^\top, & x > 0.5.
\end{array}
\right.
\]

The adiabatic exponent is $\gamma=2.0$ for the ideal gas EOS. The parameters of
water are $\gamma=7.15$ and $p_\infty=\unit[3.31\times10^8]{Pa}$ for the
stiffened gas EOS \cite{Rallu2009, Wang2008}, and $\rho_0=\unit[1000]{kg/m^3}$,
$A_1=\unit[2.20\times10^9]{Pa}$, $A_2=\unit[9.54\times10^9]{Pa}$,
$A_3=\unit[1.45\times10^{10}]{Pa}$, $B_0=B_1=0.28$, 
$T_1=\unit[2.20\times10^9]{Pa}$ and $T_2=0$ for the
polynomial EOS \cite{Jha2014}, respectively. The same parameters 
are chosen in the remaining numerical examples for the water.

The results with distinct equations of state at $t=8.0\times 10^{-5}$ are shown
in Fig. \ref{res:rm_gm_water}, where we can observe that the numerical results
agree well with the corresponding reference solutions. The comparison between
these two graphs also shows the discrepancies due to the choices of the
equations of state. It is observed that the shock wave in the stiffened gas EOS
propagates faster than that in the polynomial EOS.

\subsubsection{JWL-polynomial Riemann problem}

This example concerns the
JWL-polynomial Riemann problem. The initial states are 
\[
[\rho, u, p]^\top = \left\{
\begin{array}{ll}
[1630, ~0, ~8.3\times 10^9]^\top, &x<0.5,\\ [2mm]
[1000, ~0, ~1.0\times 10^5]^\top, & x > 0.5.
\end{array}
\right.
\]
We use the following values to describe the TNT \cite{Smith1999}:
$A_1=\unit[3.712\times10^{11}]{Pa}$,
$A_2=\unit[3.230\times10^9]{Pa}$,
$\omega=0.30$, 
$R_1=4.15$, 
$R_2=0.95$ and $\rho_0=\unit[1630]{kg/m^3}$.
The result at $t=8.0\times 10^{-5}$ is shown in Fig.
\ref{res:rm_jwl_pol}, where we can observe that both the interface and
shock are captured well without spurious oscillation. 

\begin{figure}[htbp]
	\centering
	\subfloat
	{\includegraphics[width=0.3\textwidth]{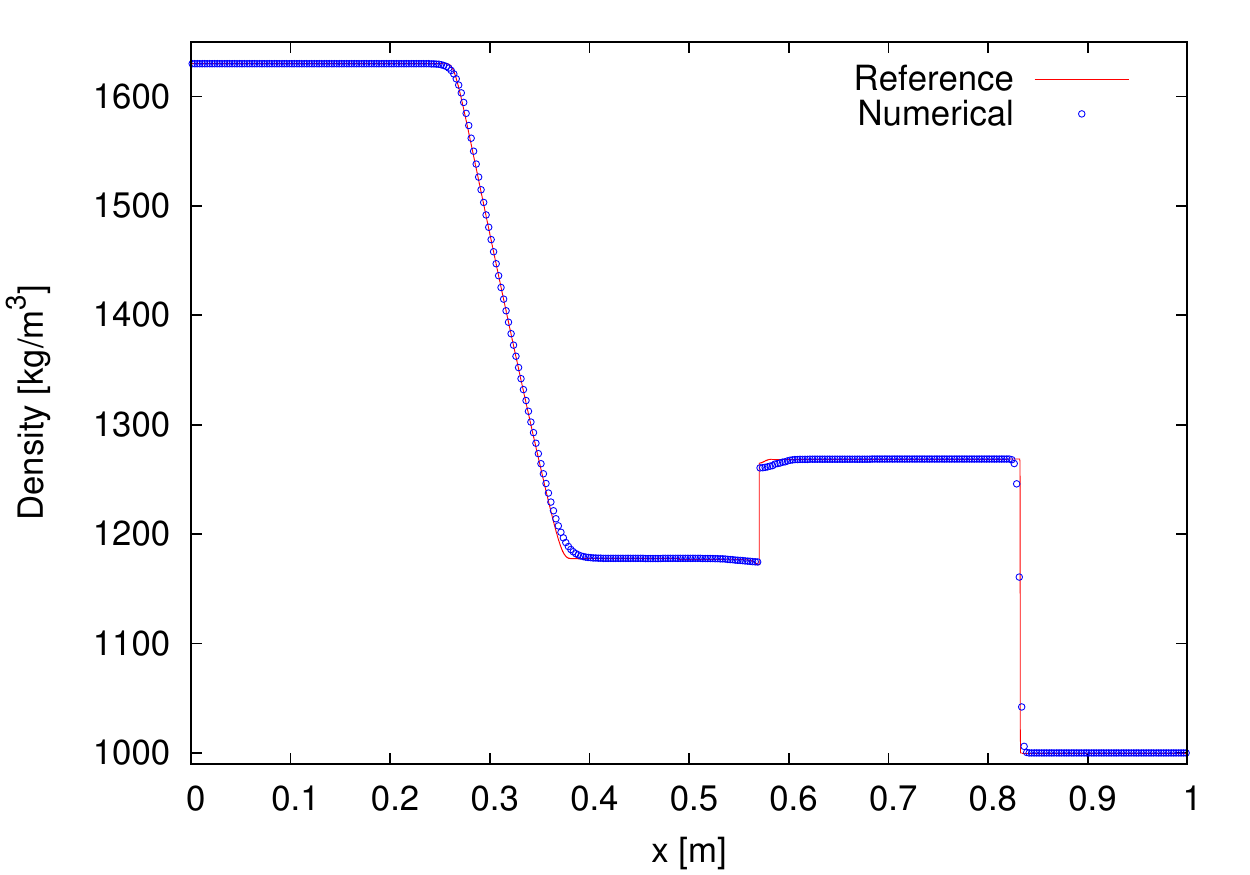}}
	\subfloat
	{\includegraphics[width=0.3\textwidth]{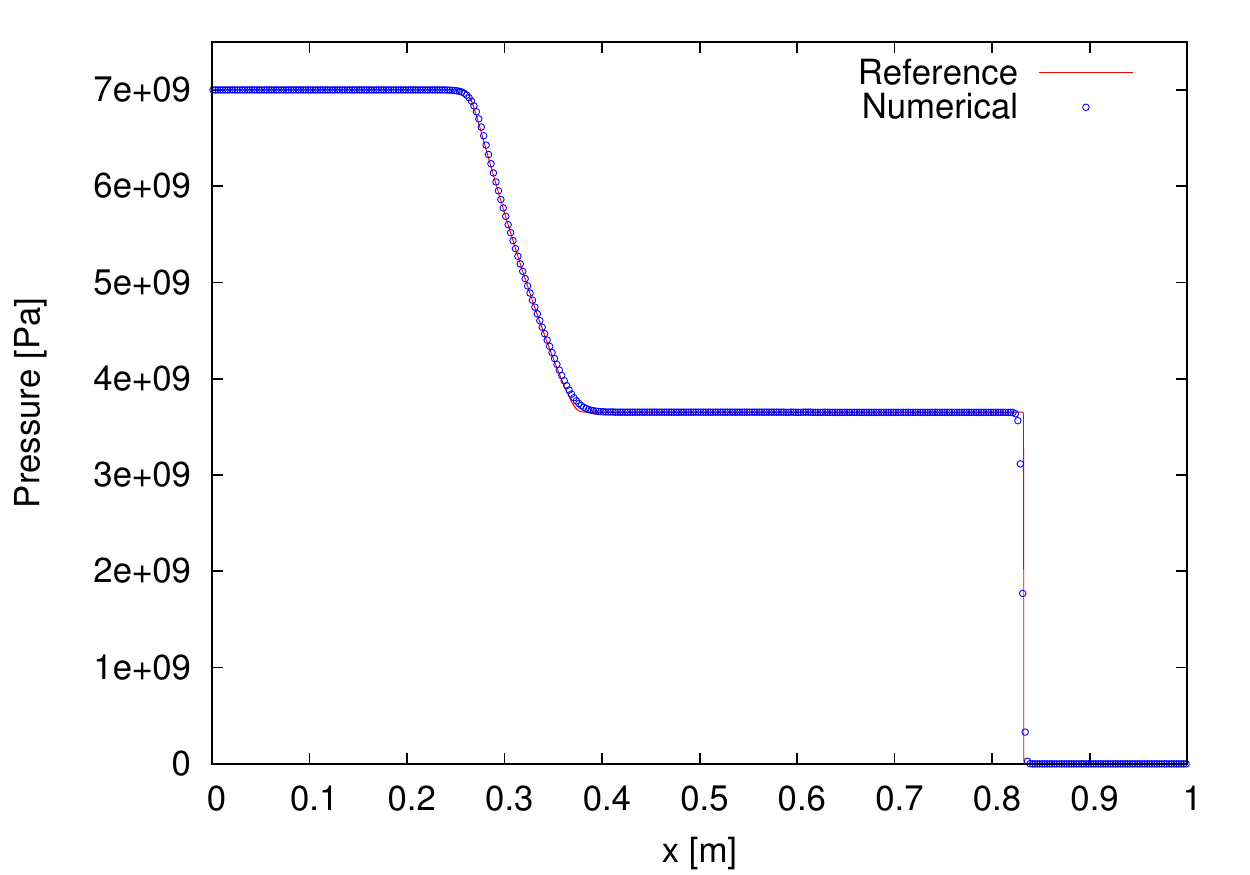}}
	\subfloat
	{\includegraphics[width=0.3\textwidth]{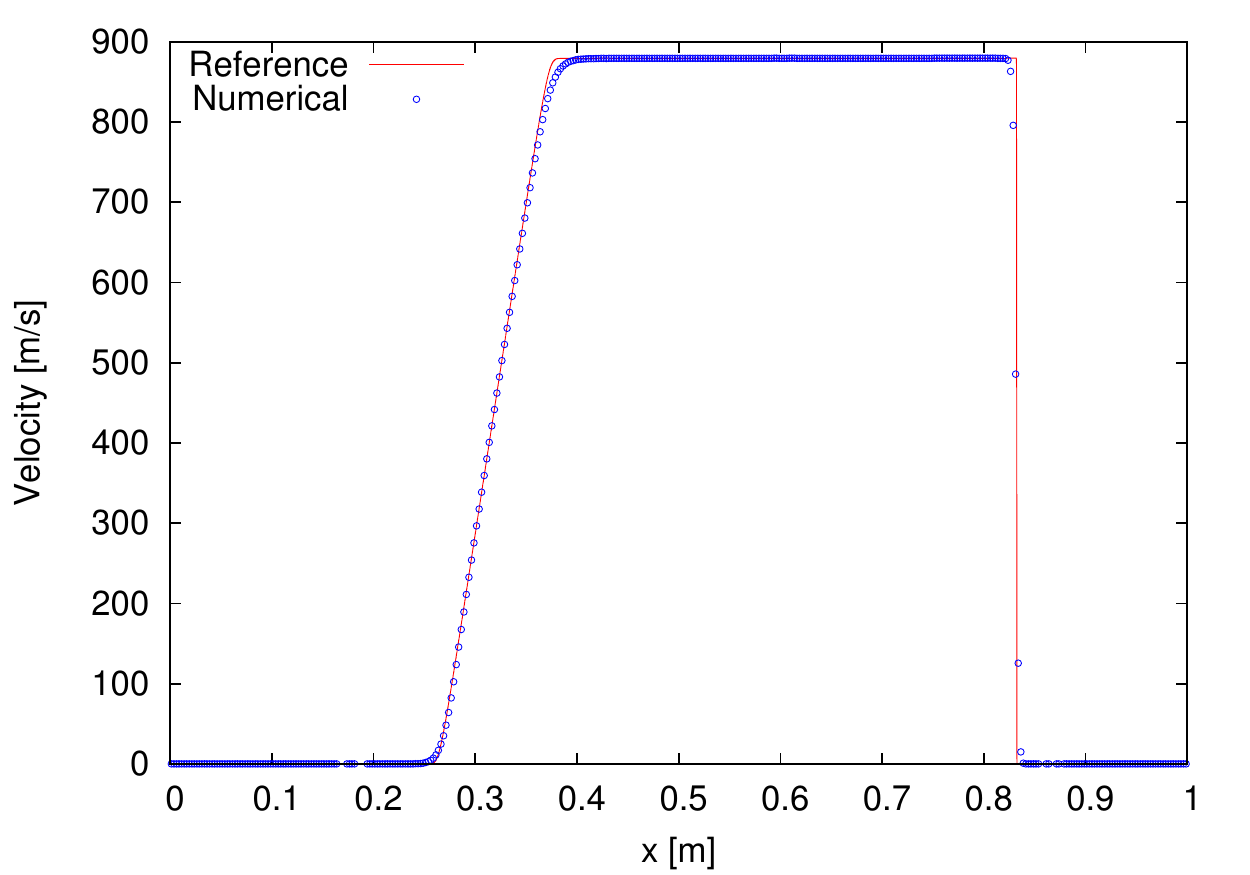}}
	\addtocounter{subfigure}{-3}
	\subfloat[Density]
	{\includegraphics[width=0.3\textwidth]{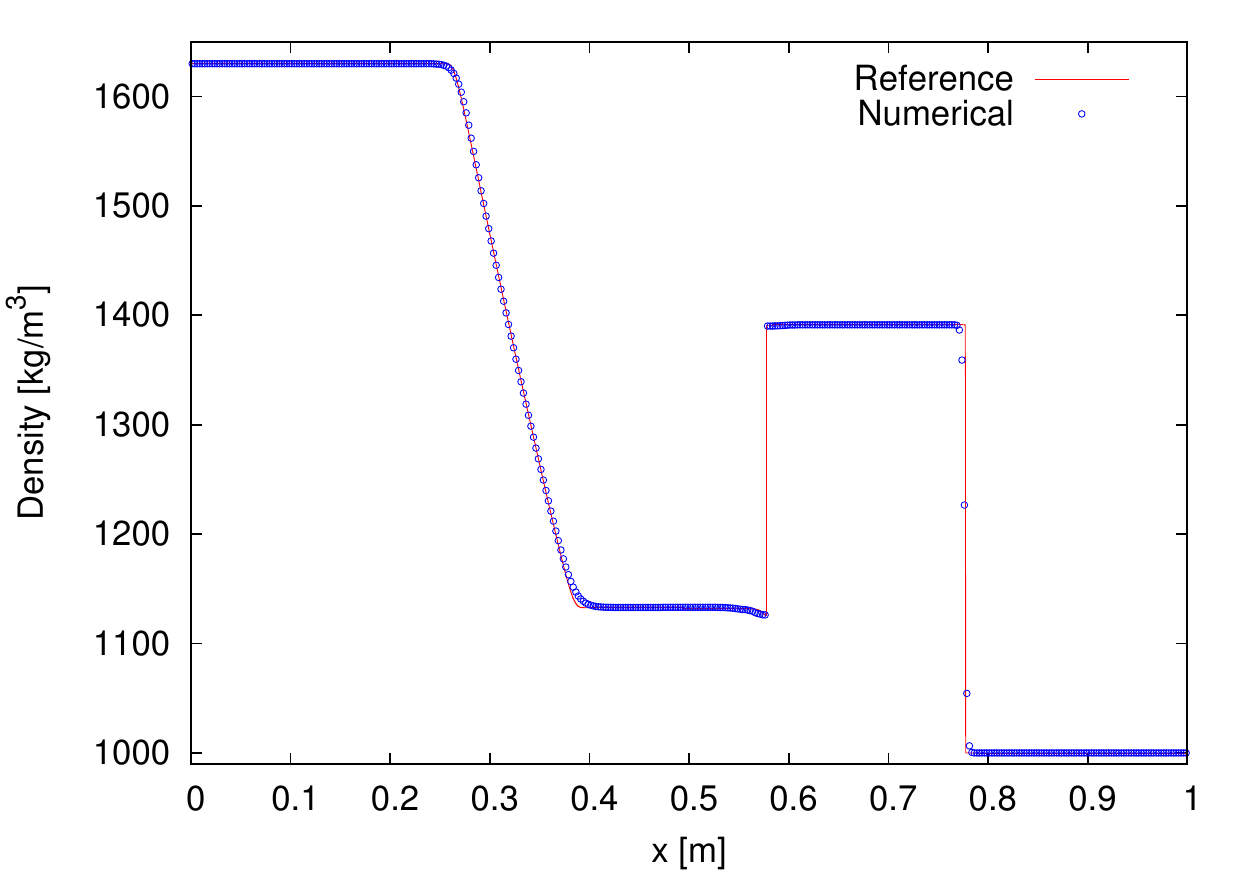}}
	\subfloat[Pressure]
	{\includegraphics[width=0.3\textwidth]{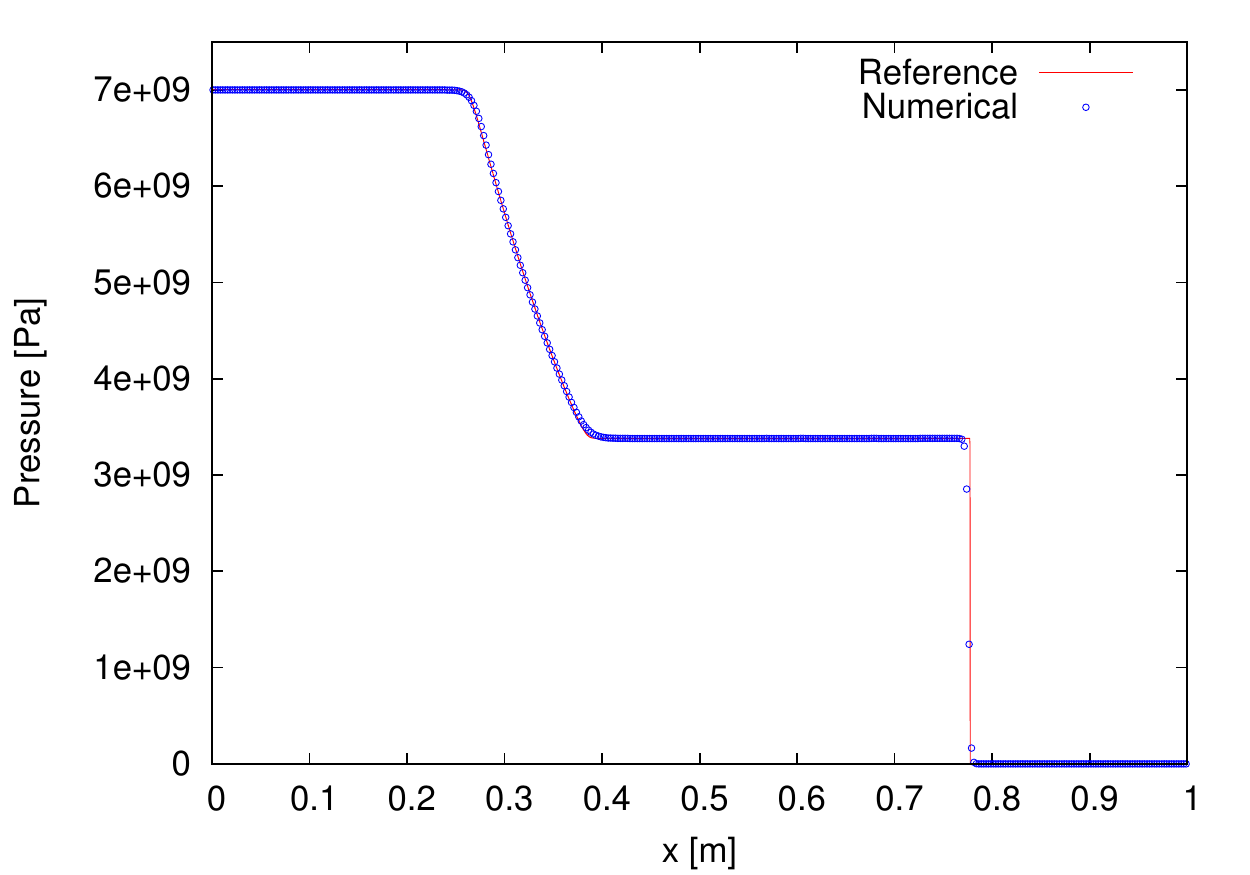}}
	\subfloat[Velocity]
	{\includegraphics[width=0.3\textwidth]{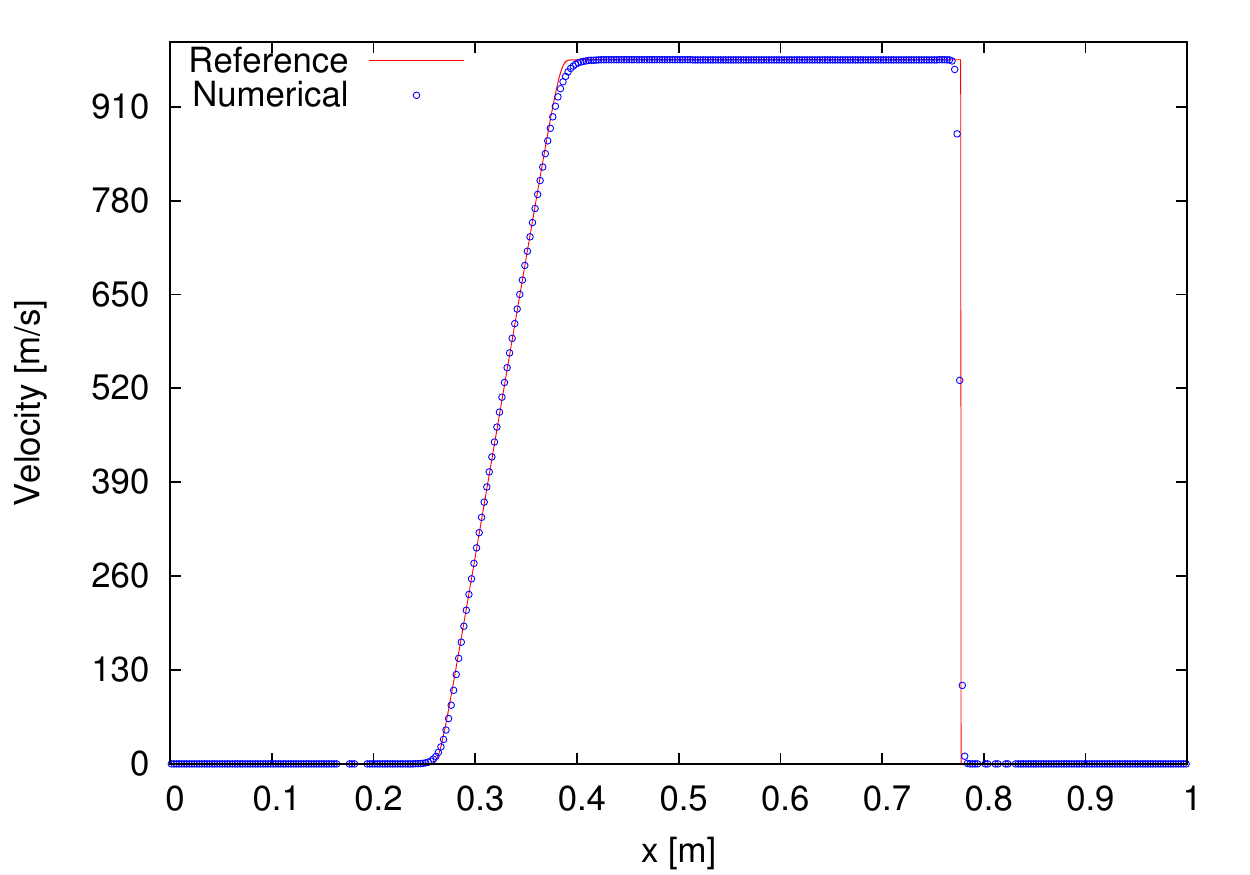}}
	\caption{Ideal gas-water Riemann problem (top row: stiffened
gas, bottom row: polynomial).}
	\label{res:rm_gm_water}
	\subfloat[Density]
	{\includegraphics[width=0.3\textwidth]{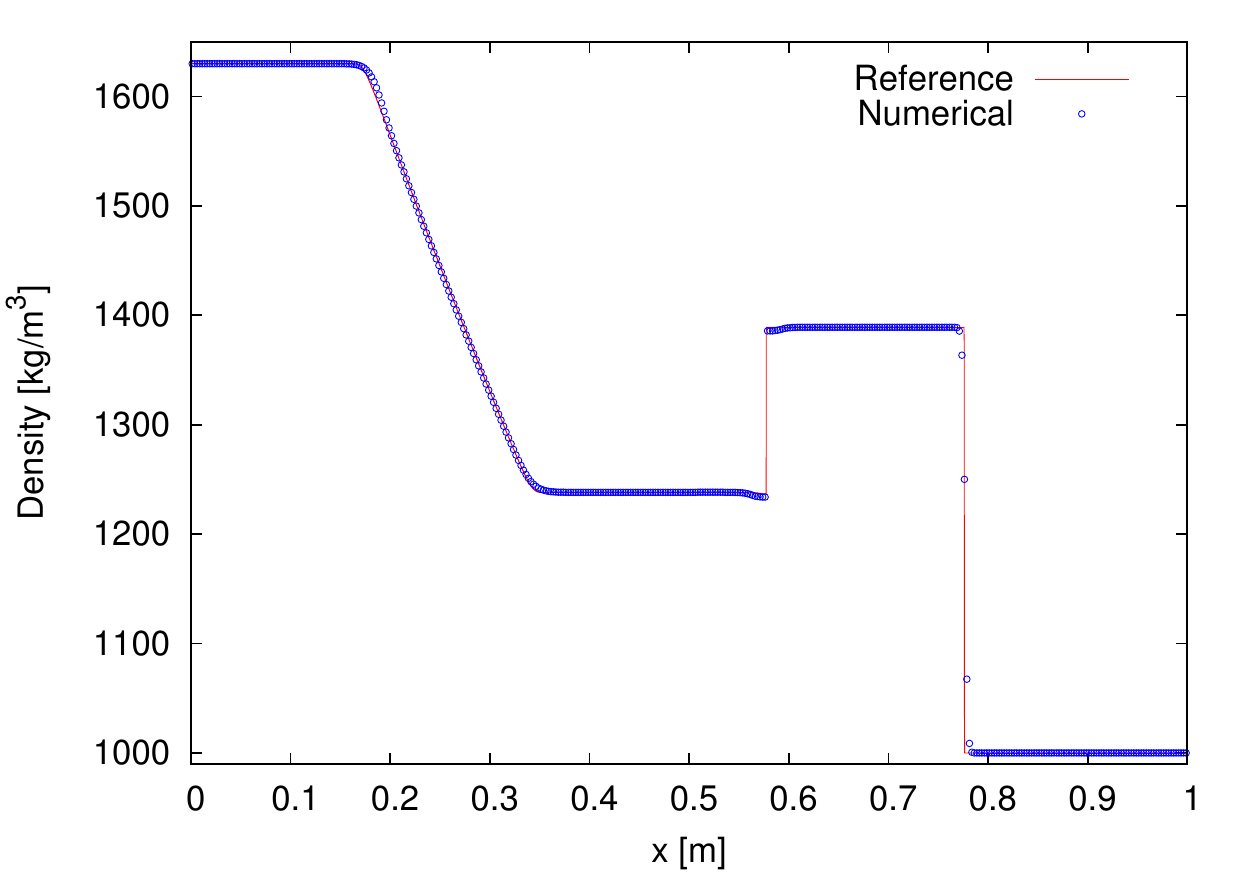}}
	\subfloat[Pressure]
	{\includegraphics[width=0.3\textwidth]{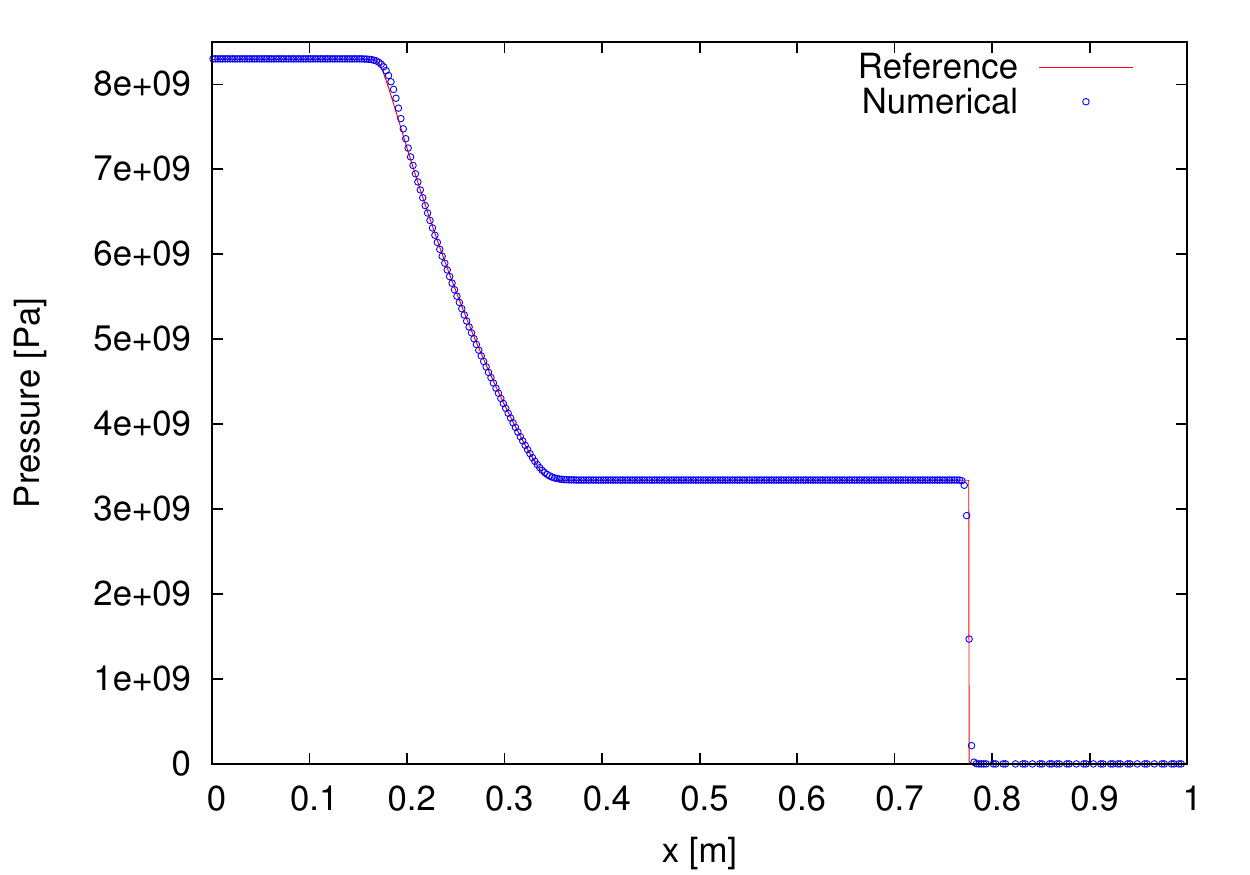}}
	\subfloat[Velocity]
	{\includegraphics[width=0.3\textwidth]{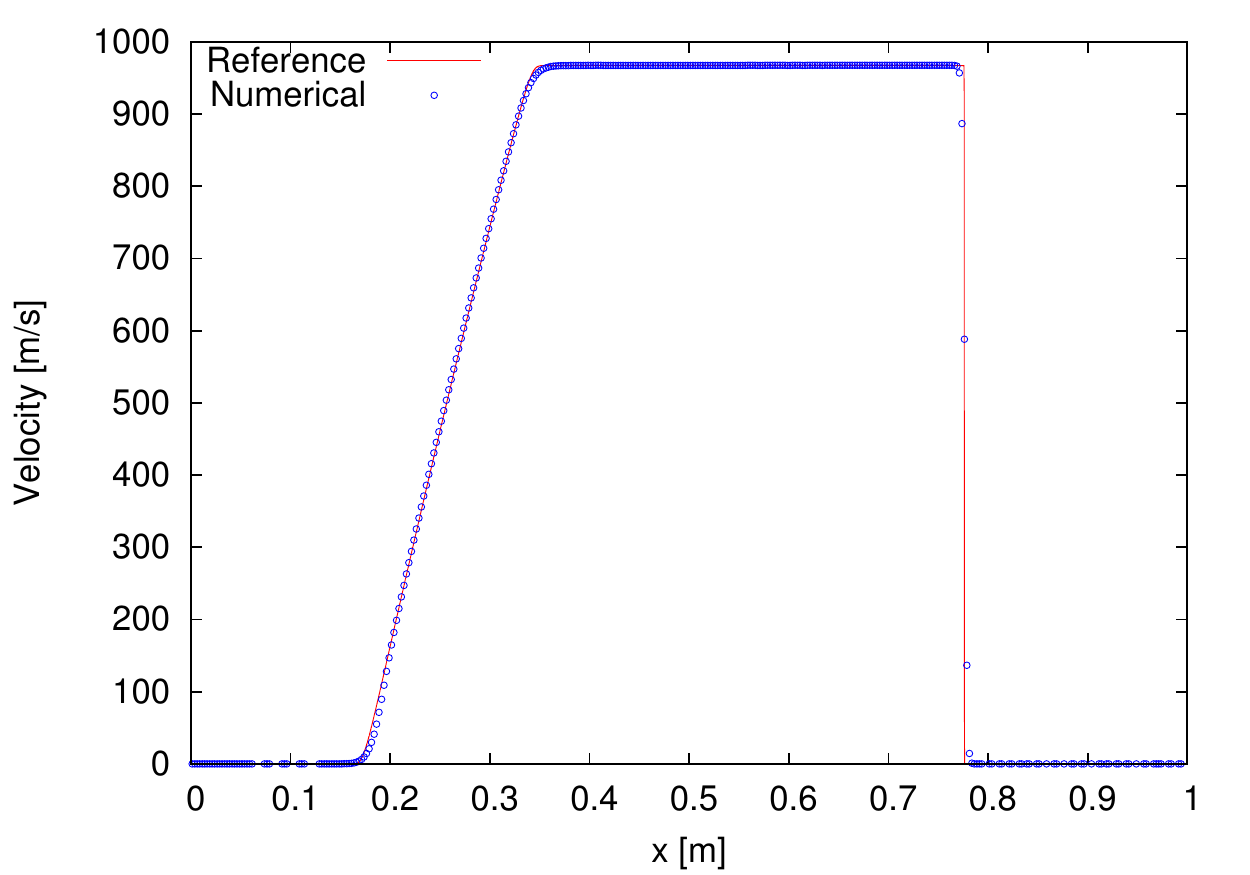}}
	\caption{JWL-polynomial Riemann
problem.}\label{res:rm_jwl_pol}
\end{figure}

\subsection{Spherically symmetric problems}

In this part we present two spherically symmetric problems, where the
governing equations are formulated as follows
\begin{equation}
\dfrac{\partial}{\partial t}
\begin{bmatrix}
r^2\rho \\ r^2\rho u \\ r^2E
\end{bmatrix}
+\dfrac{\partial}{\partial r}
\begin{bmatrix}
r^2\rho u \\ r^2(\rho u^2+p) \\ r^2(E+p)u
\end{bmatrix}
=\begin{bmatrix}
0 \\ 2rp \\ 0
\end{bmatrix}.
\label{eq:spheq}
\end{equation}
The source term in \eqref{eq:spheq} is discretized using an explicit Euler 
method. 

\subsubsection{Air blast problem}\label{problem:blast}

The shock wave that propagates through the air as a consequence of the nuclear
explosion is commonly referred to as the \emph{blast wave}. In this example we
simulate the blast wave from one kiloton nuclear charge. The explosion products
and air are modeled by the ideal gas EOS with adiabatic exponents $\gamma=1.2$
and $1.4$ respectively. The initial density and pressure are
$\unit[618.935]{kg/m^3}$ and $\unit[6.314 \times 10^{12}]{Pa}$ for the explosion
products, and $\unit[1.29]{kg/m^3}$ and $\unit[1.013\times 10^5]{Pa}$ for the
air.  The initial interface is located at $r=\unit[0.3]{m}$ initially. To
effectively capture the wave propagation we use a computational domain of radius
$\unit[5000]{m}$. 

It is known that the destructive effects of the blast wave can be measured by
its \emph{overpressure}, i.e., the amount by which the static pressure in the
blast wave exceeds the ambient pressure ($\unit[1.013\times 10^5]{Pa}$). The
overpressure increases rapidly to a peak value when the blast wave arrives,
followed by a roughly exponential decay. The integration of the overpressure
over time is called \emph{impulse}. See Fig.  \ref{rm::air_blast} (a) for an
illustration of these terminologies.  Fig. \ref{rm::air_blast} (b) -- (d) show 
the peak overpressure, impulse and shock arrival time at different radii. The
results are compared with the point explosion solutions in Qiao \cite{Qiao2003},
which confirm the accuracy of our methods in the air blast applications.

\subsubsection{Underwater explosion problem}

We use this example to simulate the underwater explosion problem where a TNT of
one hundred kilograms explodes in the water. The high explosives and water are
characterized by the JWL EOS and polynomial EOS, respectively. The radii of the
computational domain and the initial interface are $\unit[15]{m}$ and
$\unit[0.245]{m}$ respectively.  The initial pressure of the high explosives is
$\unit[9.5\times 10^9]{Pa}$.  The same problem has been simulated in
\cite{Jia2007} using ANSYS/AUTODYN.  Fig. \ref{rm::udex} shows the computed peak
overpressure and impulse at different radii. The results agree well with the
empirical law provided in \cite{Cole1948}.

\begin{figure}
\centering
\subfloat[Typical pressure-time relation]
  {\includegraphics[width=0.5\textwidth]{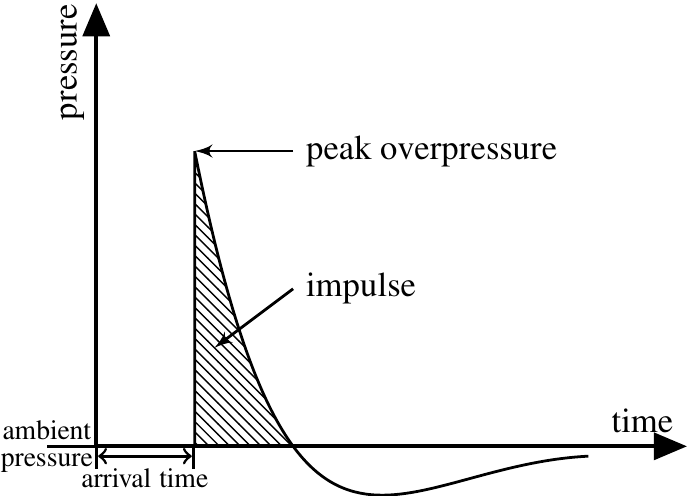}}
\subfloat[Peak overpressure]
  {\includegraphics[width=0.5\textwidth]{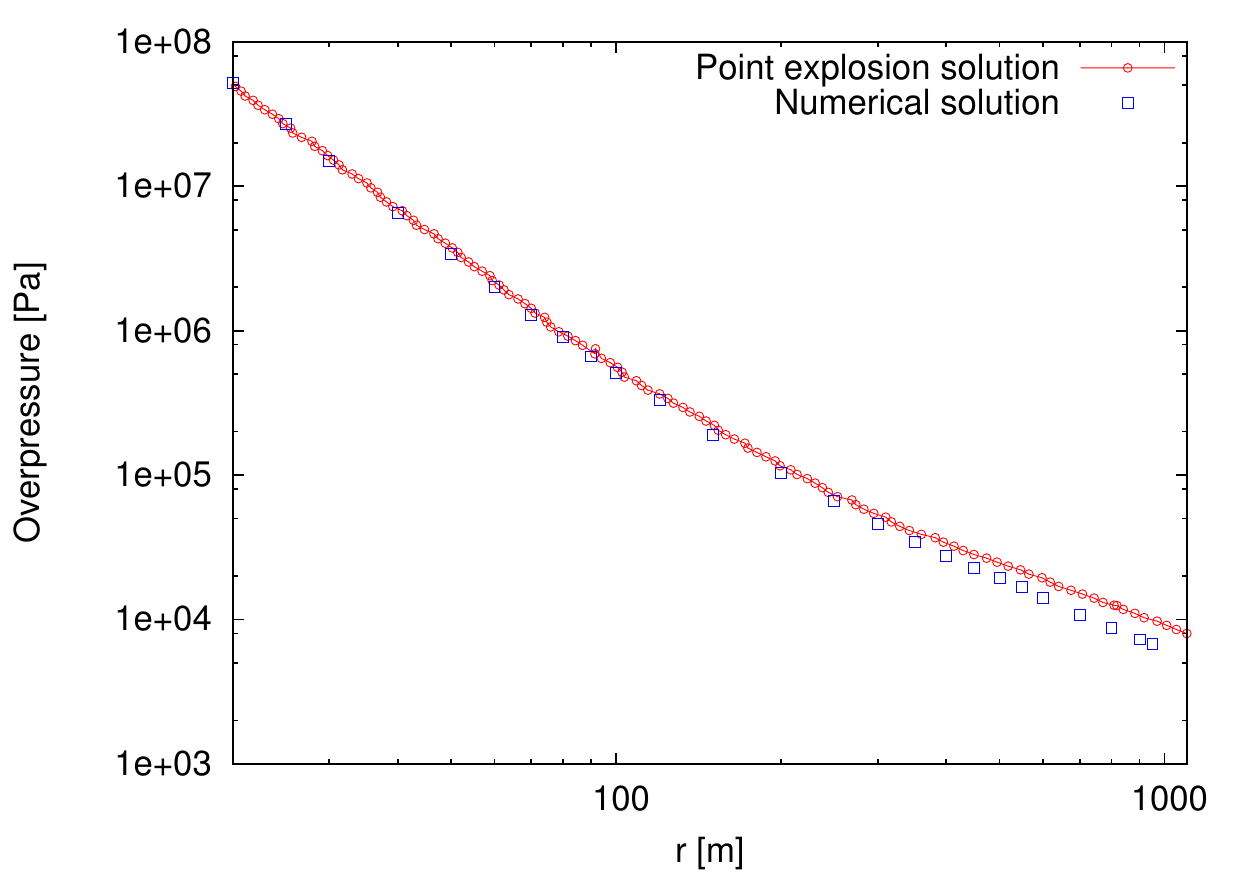}} \\
\subfloat[Impulse]
  {\includegraphics[width=0.5\textwidth]{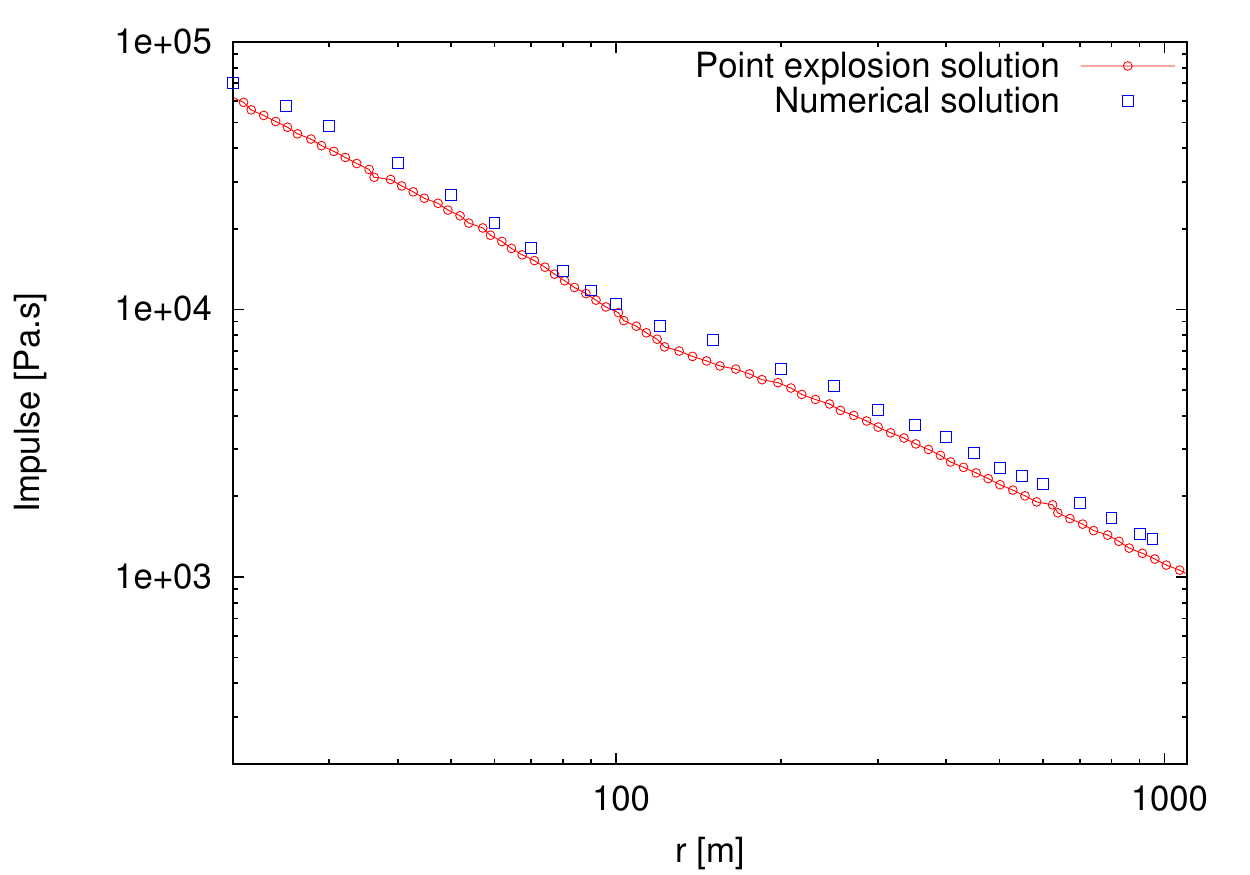}}
\subfloat[Arrival time]
  {\includegraphics[width=0.5\textwidth]{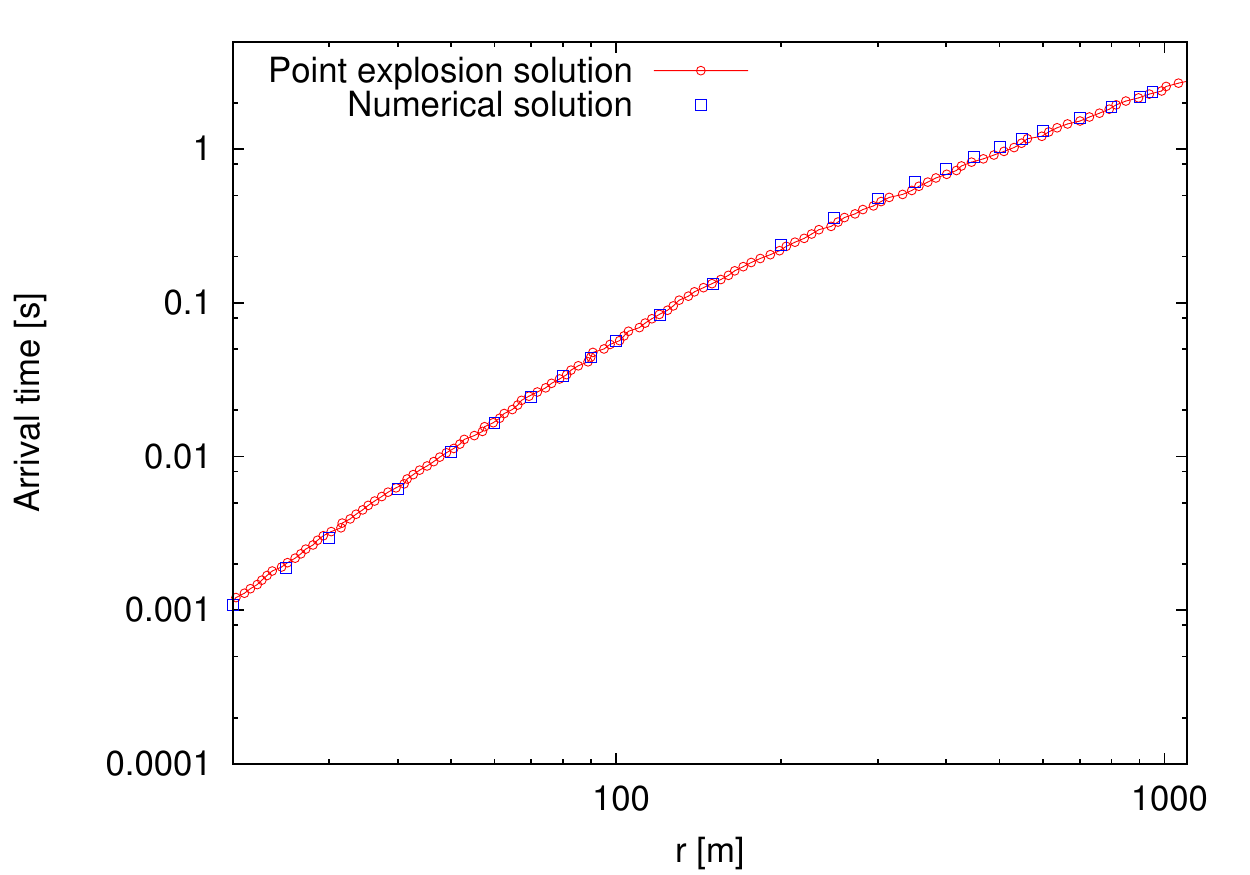}}
\caption{Shock wave parameters for air blast problem.}
\label{rm::air_blast}
\end{figure}

\begin{figure}[htbp]
\centering
\subfloat[Peak overpressure]
{\includegraphics[width=0.5\textwidth]{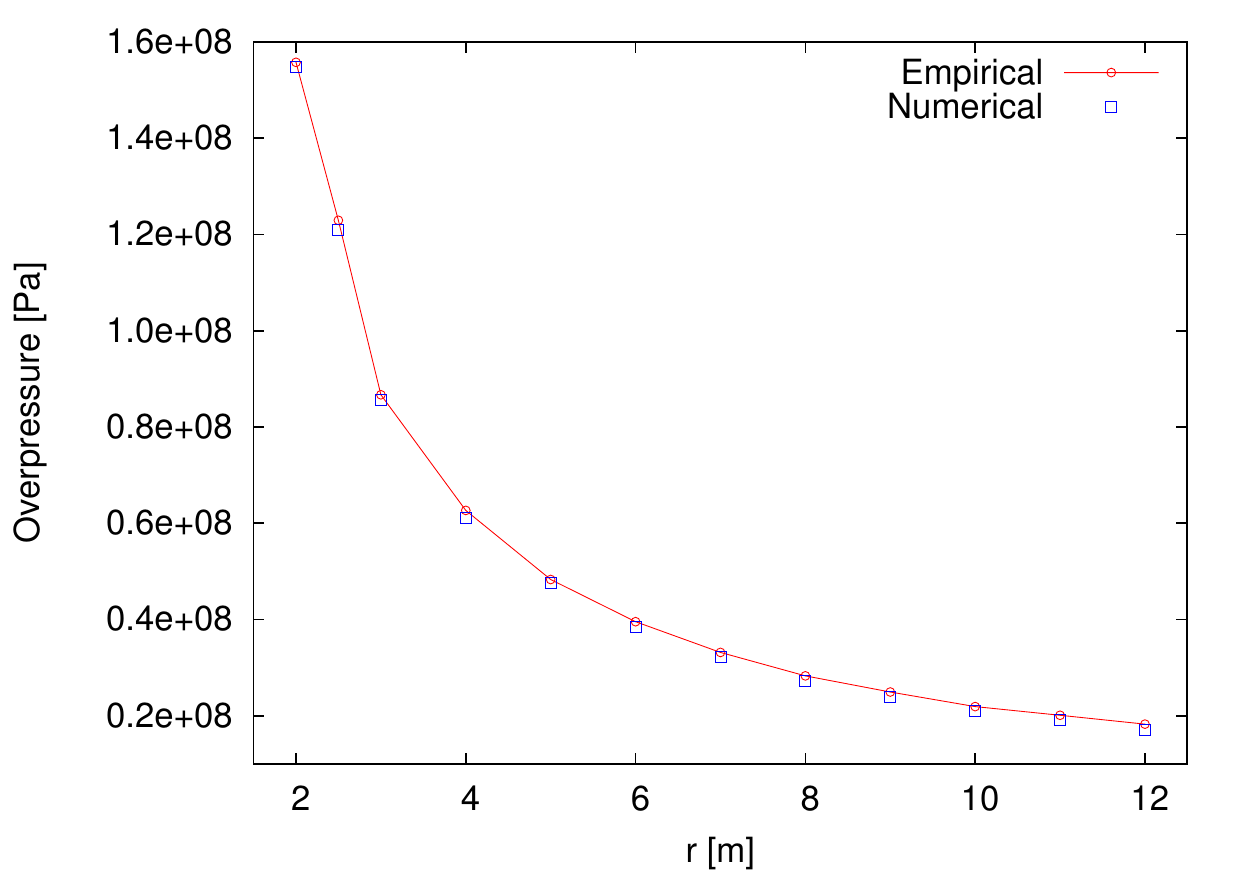}}
\subfloat[Impulse]
{\includegraphics[width=0.5\textwidth]{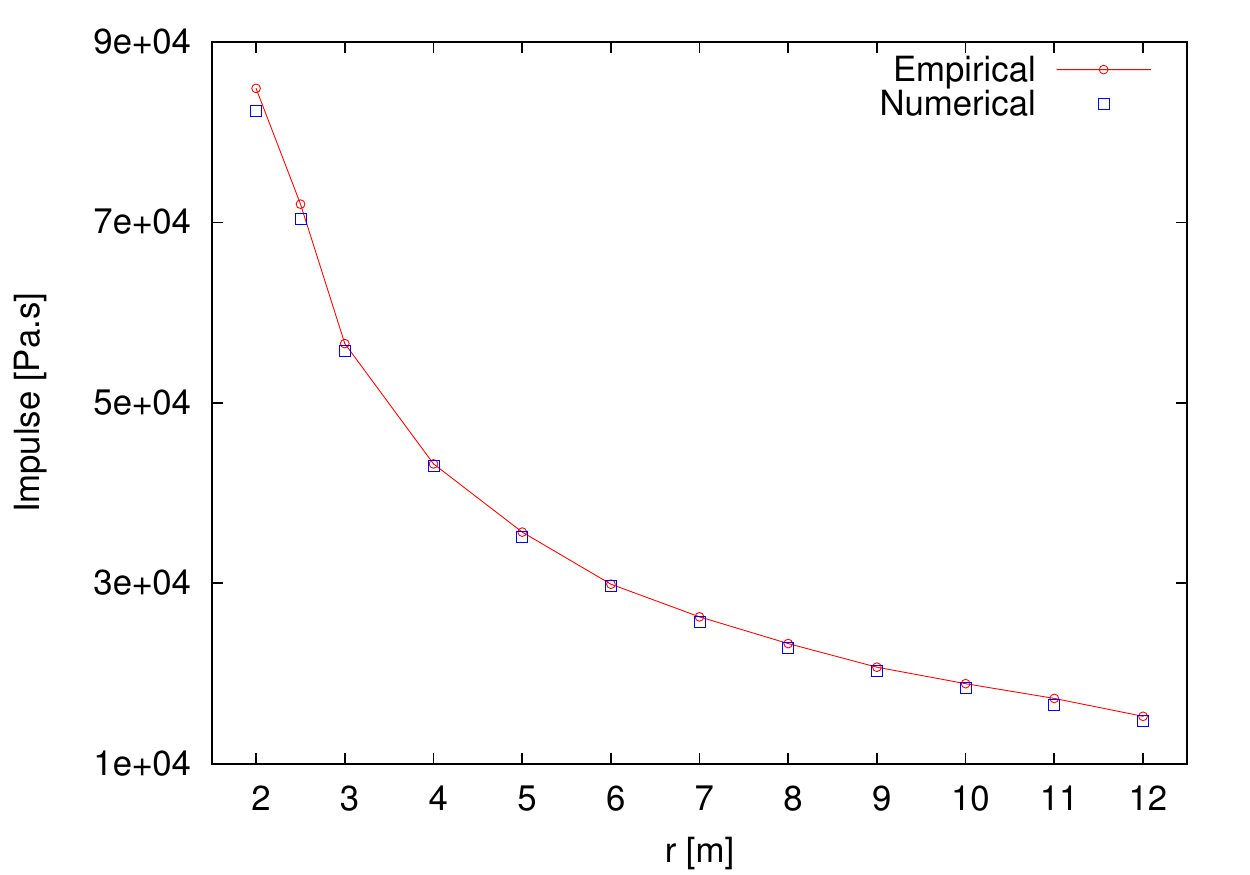}}
\caption{Shock wave parameters for underwater explosion problem.}
\label{rm::udex}
\end{figure}

\subsection{Two-dimensional problems}

In this part, we present a few two-dimensional cylindrically symmetric flows in
engineering applications. The Euler equations for this configuration are
formulated as a cylindrical form
\[
\dfrac{\partial}{\partial t}
\begin{bmatrix}
r\rho \\ r\rho u \\ r\rho v\\ rE
\end{bmatrix}
+\dfrac{\partial}{\partial r}
\begin{bmatrix}
r\rho u \\ r(\rho u^2+p) \\ r\rho uv \\ r(E+p)u
\end{bmatrix}
+\dfrac{\partial}{\partial z}
\begin{bmatrix}
r\rho v \\ r\rho uv \\ r(\rho v^2+p)\\ r(E+p)v
\end{bmatrix}
=\begin{bmatrix}
0 \\ p \\ 0 \\ 0
\end{bmatrix}.
\]
To improve the efficiency of the simulation, the $h$-adaptive mesh method is
adopted here \cite{Li2013}. Roughly speaking, more elements will be distributed
in the region where the jump of pressure is sufficiently large.

\subsubsection{Nuclear air blast problem}

In this example we simulate the nuclear air blast in the computational domain
$0\le r,z\le \unit[2000]{m}$. The initial states of the explosion products and
air in this example are the same as that in Section \ref{problem:blast}, except
that the bottom edge $z=0$ is now a rigid ground. The explosive center is
located at the height $z=\unit[100]{m}$.  And the radius of the initial
interface is $\unit[0.3]{m}$ at $t=0$. Fig. \ref{rm::air_blast2} shows the
pressure contours and adaptive meshes at $t=\unit[0.09]{s}$ and $\unit[0.3]{s}$.
When the blast wave produced by the nuclear explosion arrives at the ground, it
will be reflected firstly and propagate along the rigid ground simultaneously.
When the incident angle exceeds the limit, the reflective wave switches from
regular to irregular, and a Mach blast wave occurs. The peak overpressure and
impulse at different radii are shown in Fig.  \ref{rm::air_blast_ref2}. Our
numerical results agree well the the reference data interpolated from the given
standard data in \cite{Glasstone1977}.

\begin{figure}[htbp]
\centering
\subfloat[Pressure contour at $t=0.09$s]
{\includegraphics[width=0.45\textwidth]{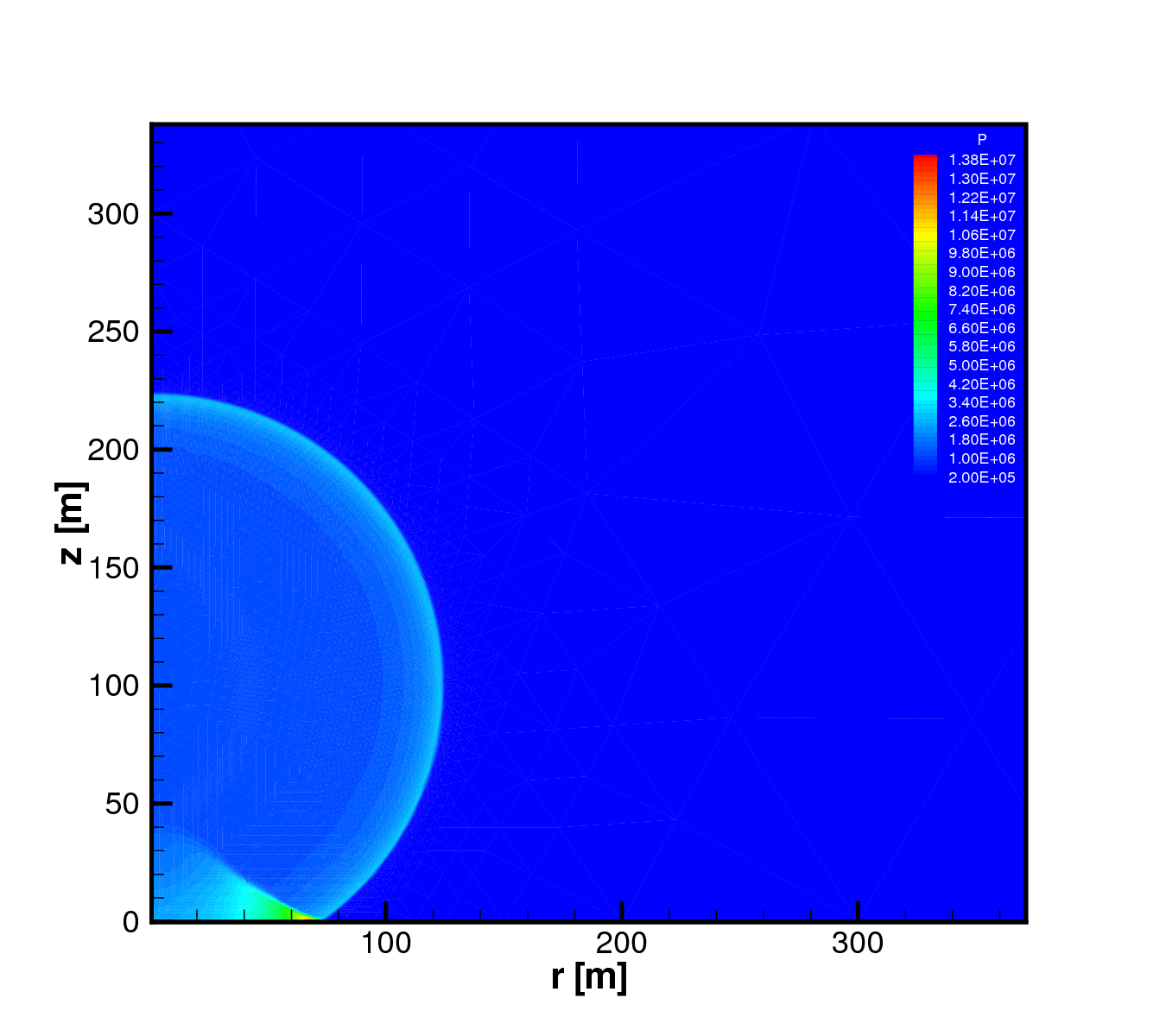}}
\subfloat[Adaptive mesh at $t=0.09$s]
{\includegraphics[width=0.45\textwidth]{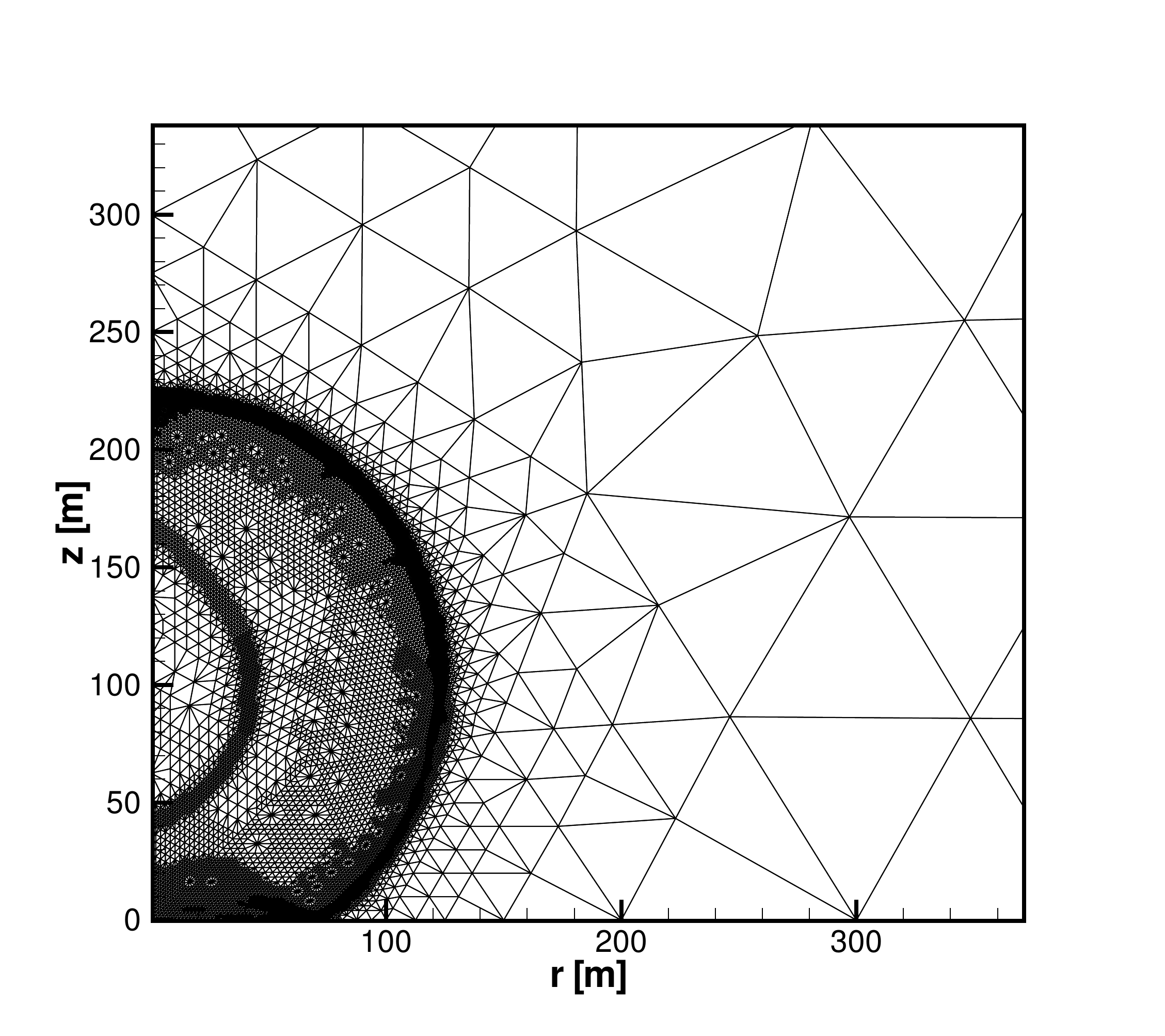}}\\
\subfloat[Pressure contour at $t=0.3$s]
{\includegraphics[width=0.45\textwidth]{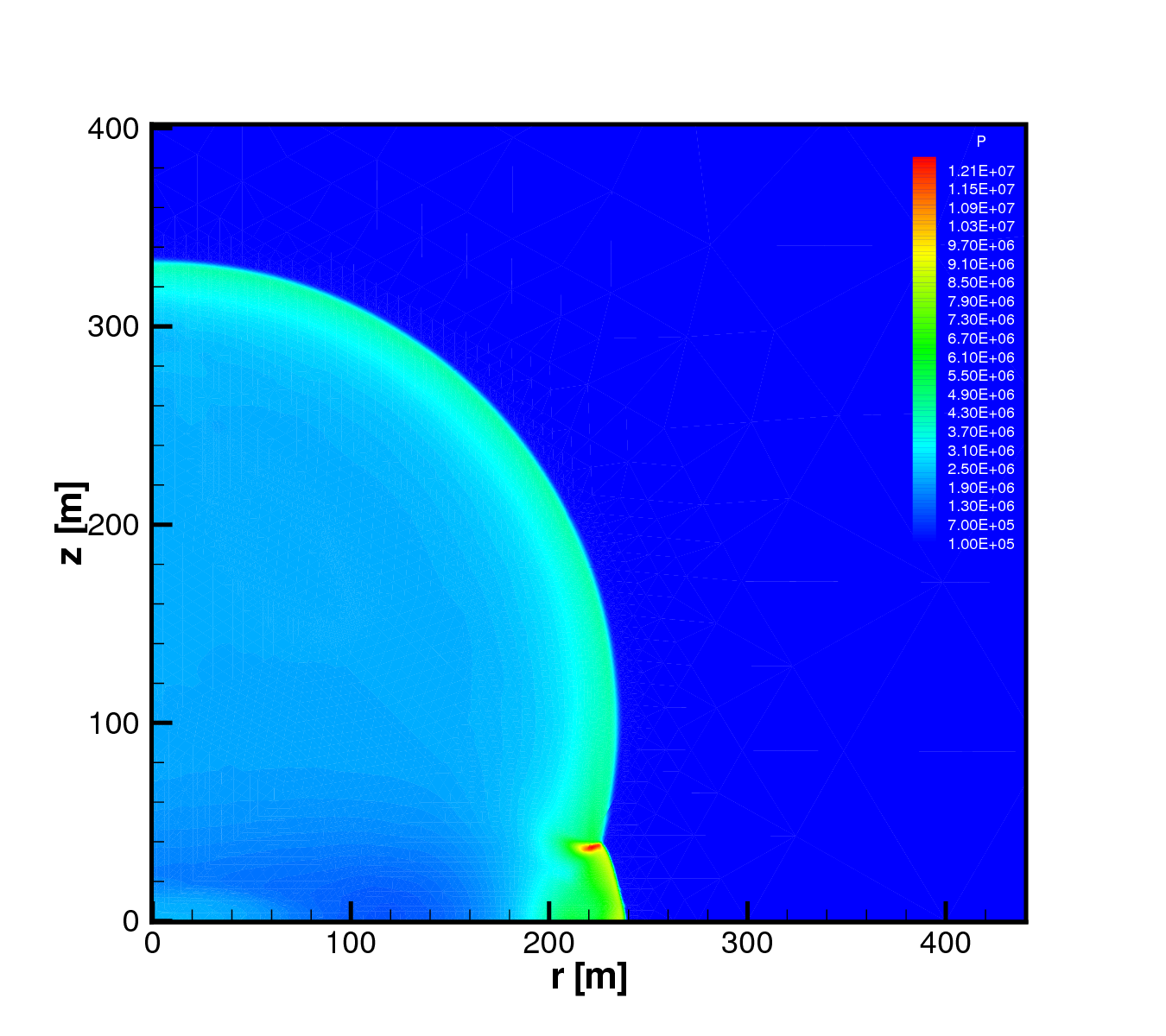}}
\subfloat[Adaptive mesh at $t=0.3$s]
{\includegraphics[width=0.45\textwidth]{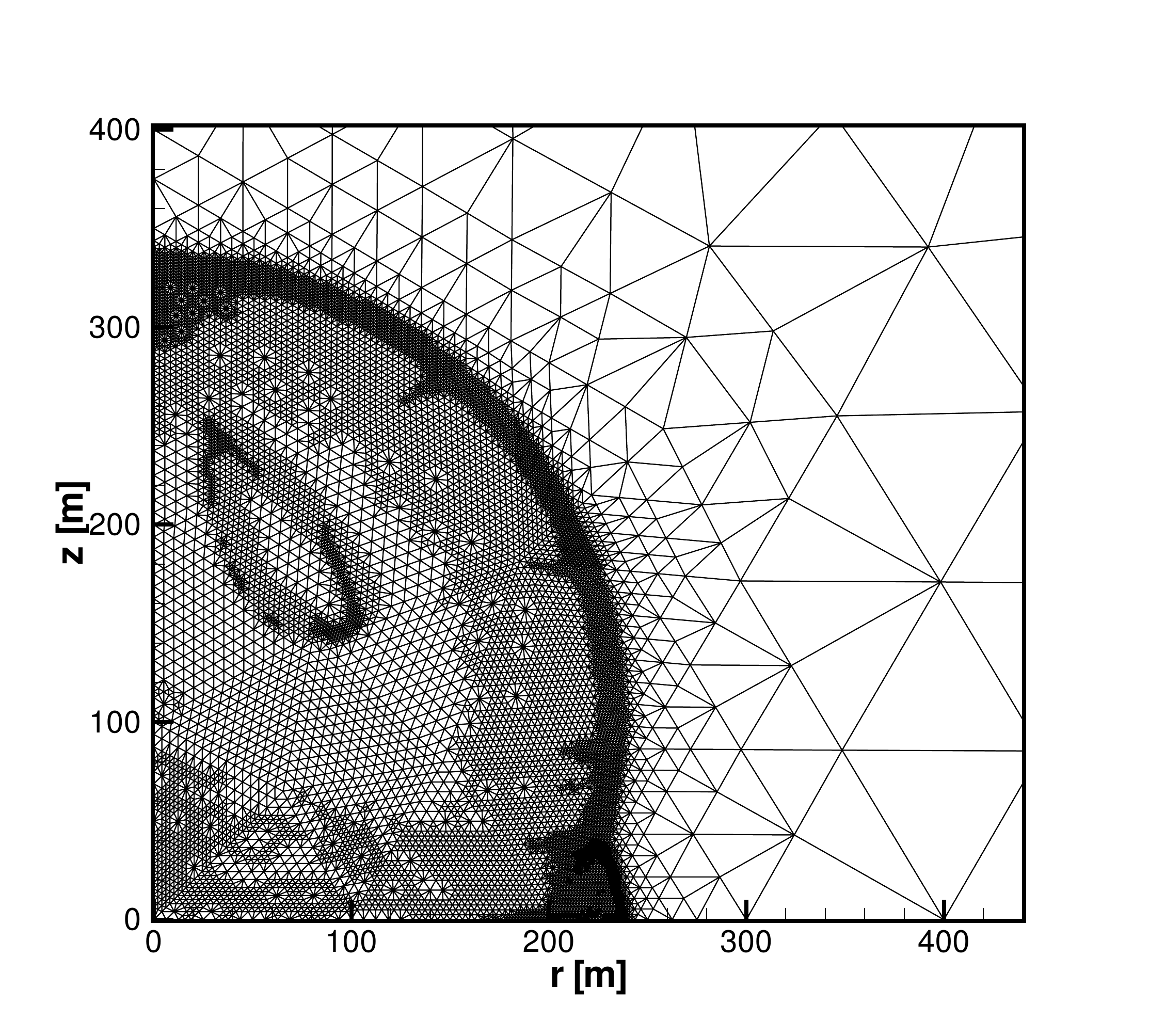}}
\caption{Pressure contours and adaptive meshes for nuclear 
	        air blast problem.}
\label{rm::air_blast2}
\subfloat[Peak overpressure]
{\includegraphics[width=0.45\textwidth]{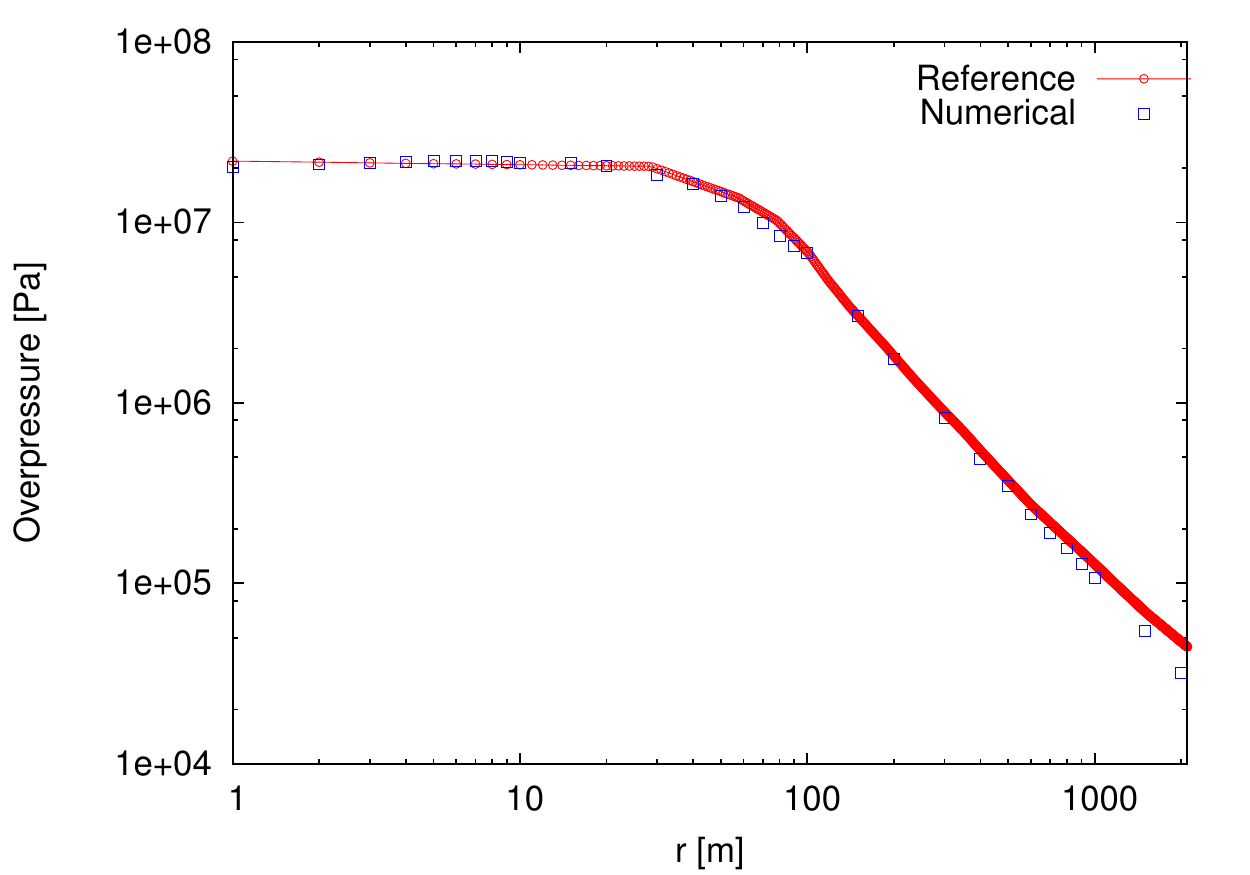}}
\subfloat[Impulse]
{\includegraphics[width=0.45\textwidth]{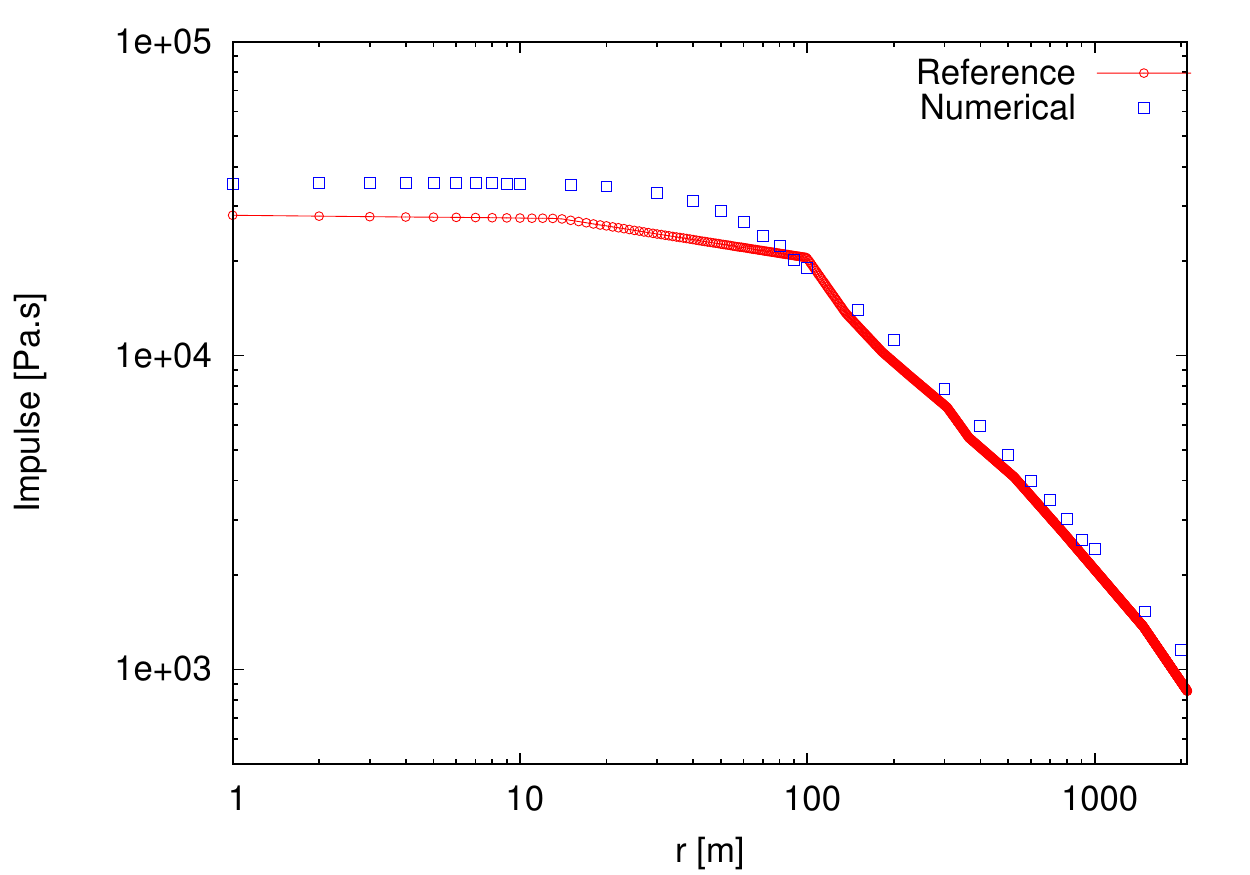}}
\caption{Shock wave parameters for nuclear air blast problem.}
\label{rm::air_blast_ref2}
\end{figure}

\subsubsection{TNT explosion in air}\label{problem:tntair}

We use this example to assess the isotropic behavior of TNT explosion in a
computational domain $0\le r\le \unit[20]{m},0\le z\le \unit[40]{m}$. The
initial density and pressure are $\unit[1630]{kg/m^3}$ and $\unit[9.5\times
10^9]{Pa}$ for high explosives, and $\unit[1.29]{kg/m^3}$ and $\unit[1.013\times
10^5]{Pa}$ for the air. The initial interface is a sphere of radius
$\unit[0.0527]{m}$ centered at the height $z=\unit[10]{m}$.  The results of
shock produced by the high explosives are shown in Fig.  \ref{rm::tnt1}. The
shock parameters are shown in Fig. \ref{rm::tnt2}, in comparison with the
experimental data in \cite{Baker1973} and \cite{Crowl1969}.

\begin{figure}[htbp]
\centering
\subfloat[Pressure contour at $t=1.8\times 10^{-4}$s]
{\includegraphics[width=0.45\textwidth]{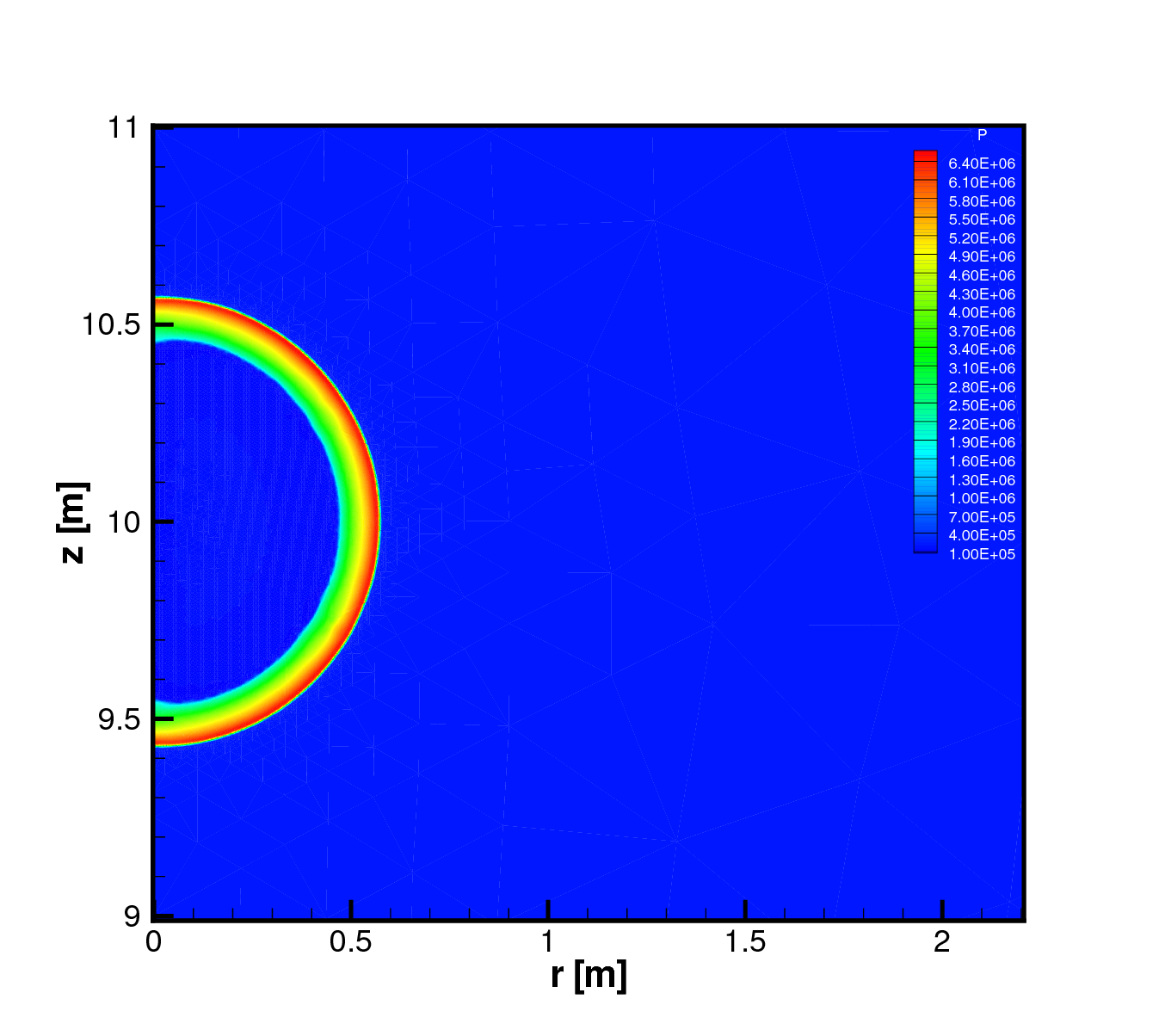}}
\subfloat[Adaptive mesh at $t=1.8\times 10^{-4}$s]
{\includegraphics[width=0.45\textwidth]{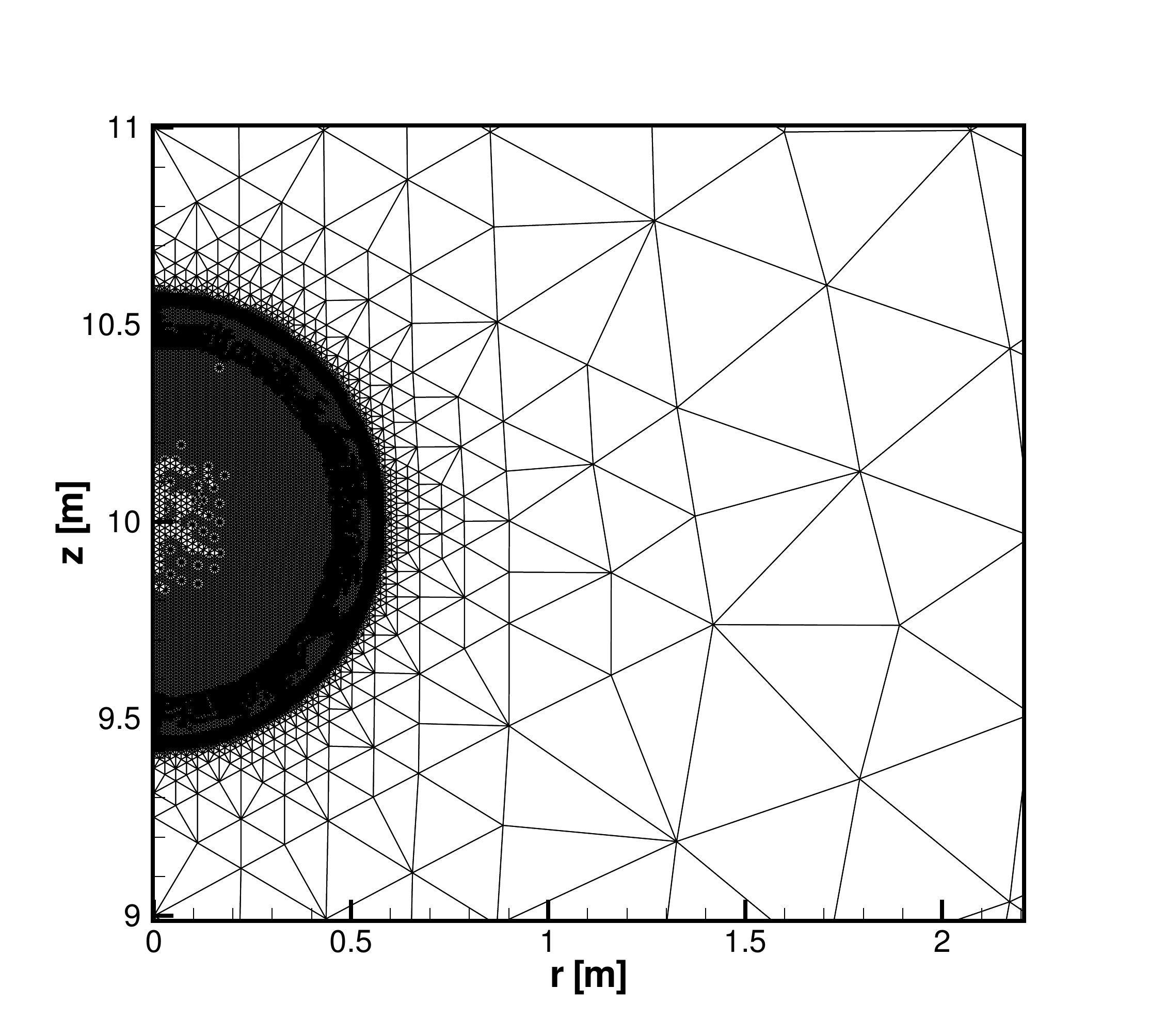}}\\
\subfloat[Pressure contour at $t=1.2\times 10^{-3}$s]
{\includegraphics[width=0.45\textwidth]{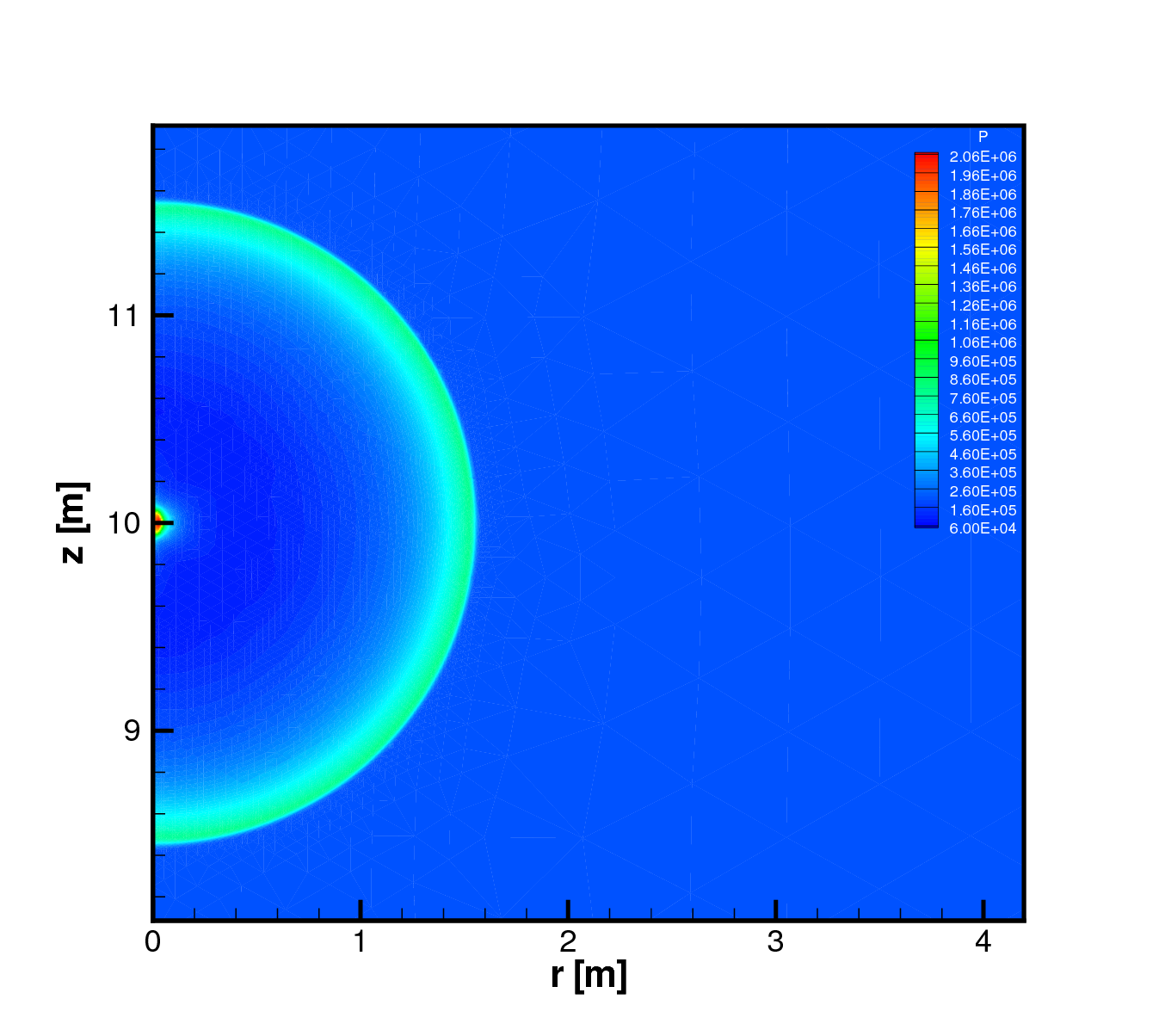}}
\subfloat[Adaptive mesh at $t=1.2\times 10^{-3}$s]
{\includegraphics[width=0.45\textwidth]{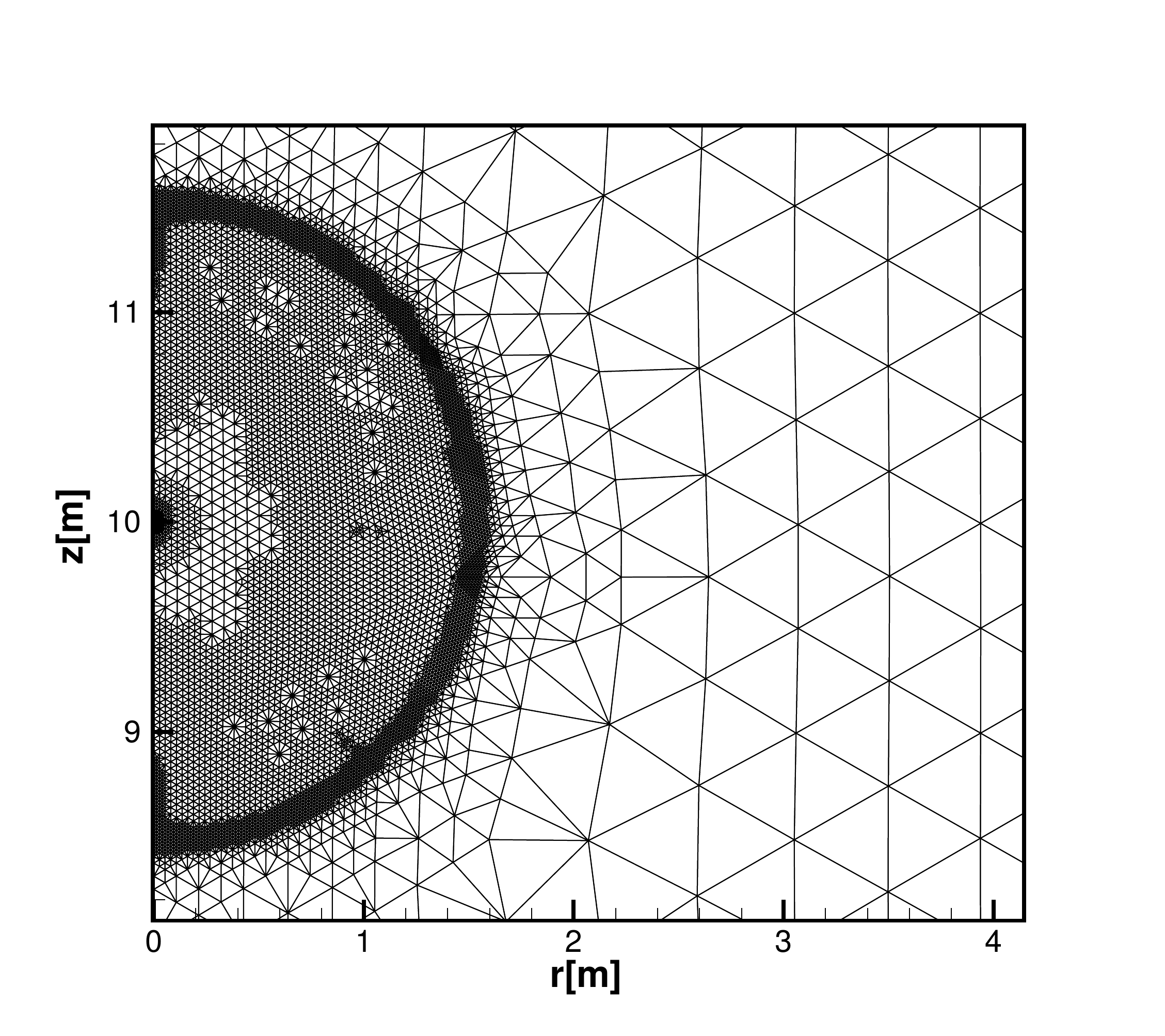}}
\caption{Pressure contours and adaptive meshes
	         for TNT explosion in air.}
\label{rm::tnt1}
\subfloat[Peak overpressure]
{\includegraphics[width=0.42\textwidth]{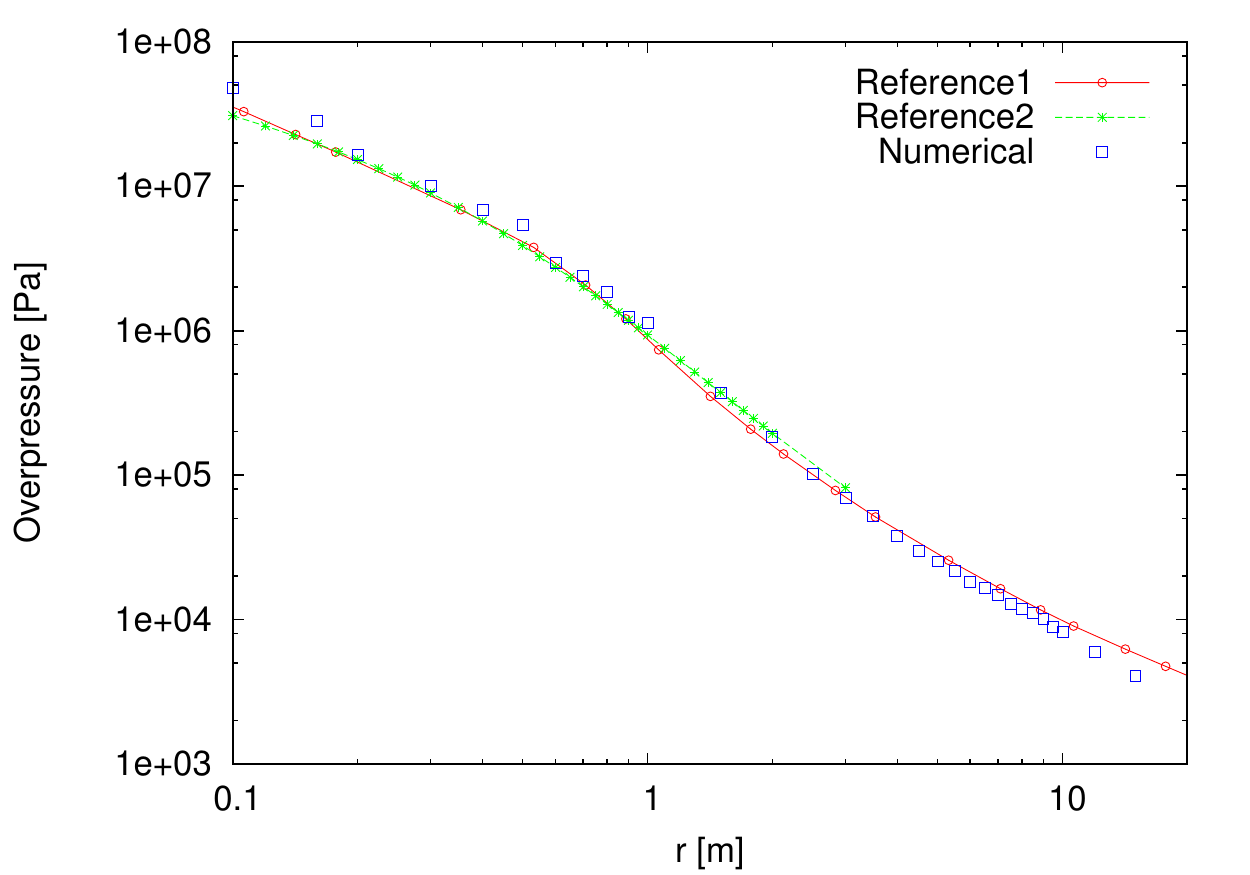}}
\subfloat[Impulse]
{\includegraphics[width=0.42\textwidth]{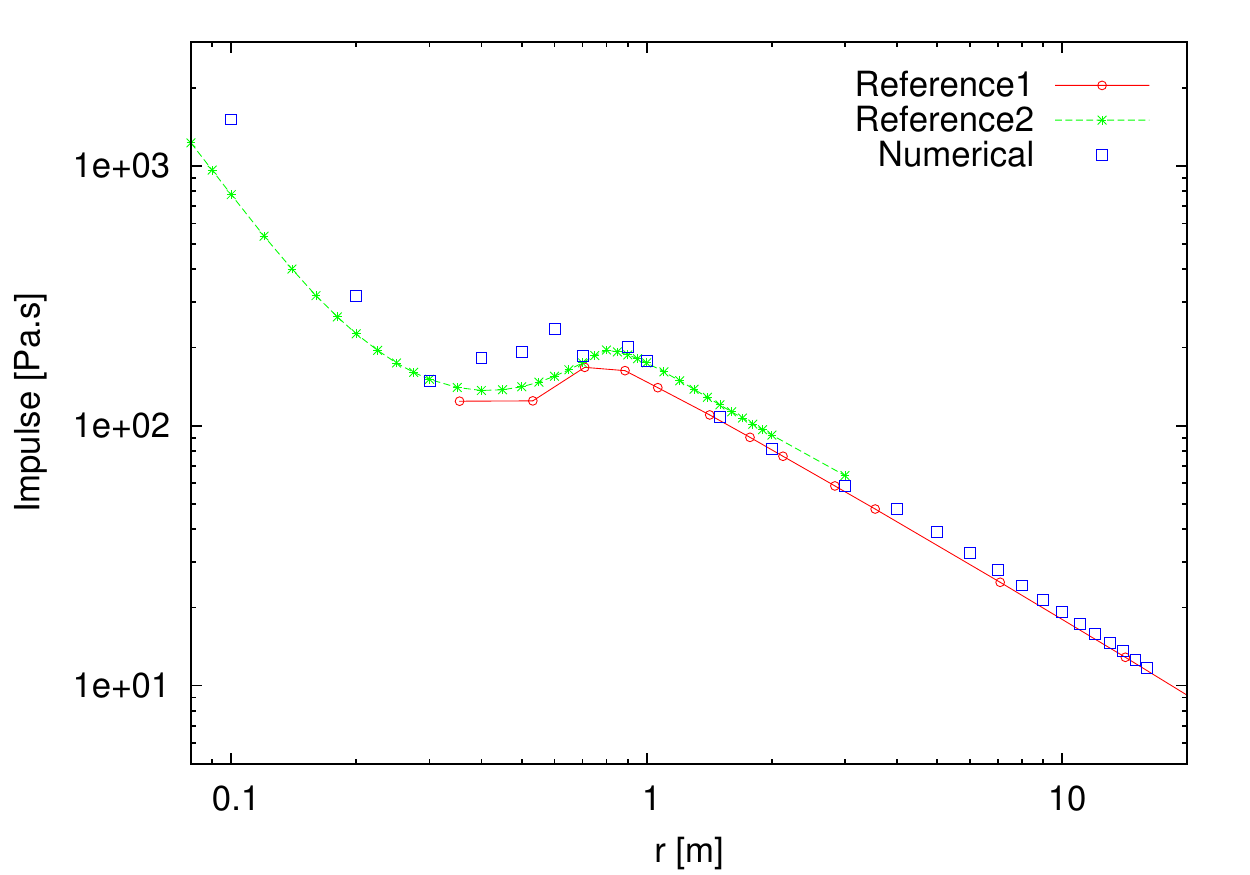}}
\caption{Shock wave parameters for TNT explosion in air. The reference
             solution 1 is taken from Baker \cite{Baker1973}, while the
             reference solution 2 is taken from Crowl \cite{Crowl1969}.}
\label{rm::tnt2}
\end{figure} 

\subsection{Three-dimensional shock impact problem}

In the last example, we present a three-dimensional shock impact problem in
practical applications. The computational domain is a cross-shaped confined
space $E_1\bigcup E_2$ where $E_1=[-2,2]\times [-2,2]\times [-6,6]$ and
$E_2=[-6,6]\times [-2,2]\times[-2,2]$ in meters. The sphere of radius
$\unit[0.4]{m}$ centered at the origin is filled with high explosives, while the
remaining region is filled with air. The initial states are exactly the same as
that of the problem in Section \ref{problem:tntair}.  Due to the symmetry we
only compute the problem in the first octant, namely an L-shaped region. All the
boundaries are reflective walls. Again we use the $h$-adaptive technique to
capture the shock front. The total number of cells is about $0.8$ -- $1$
million. To accelerate the computation we perform the parallel computing with
eight processors based on the classical domain decomposition methods.

The numerical results of shock wave produced by the high explosives at
$t=\unit[2.4\times 10^{-4}]{s}$ and $t=\unit[9.2\times 10^{-4}]{s}$ are
displayed in Fig. \ref{rm::3d1}. From here we can see that the shock wave
propagates as an expansive spherical surface in earlier period. When the
spherical shock wave impinges on the rigid surface, shock strength increases and
shock reflection occurs. It is also observed in the slice plots of Fig.
\ref{rm::3d2} that the wave structures are much more complicated in the
three-dimensional confined space. The numerical results here also show the
capability of our methods in resolving fully three-dimensional flows. 

\begin{figure}[htbp]
\centering
\subfloat[Pressure contour at $t=2.4\times 10^{-4}$s]
{\includegraphics[width=0.4\textwidth]{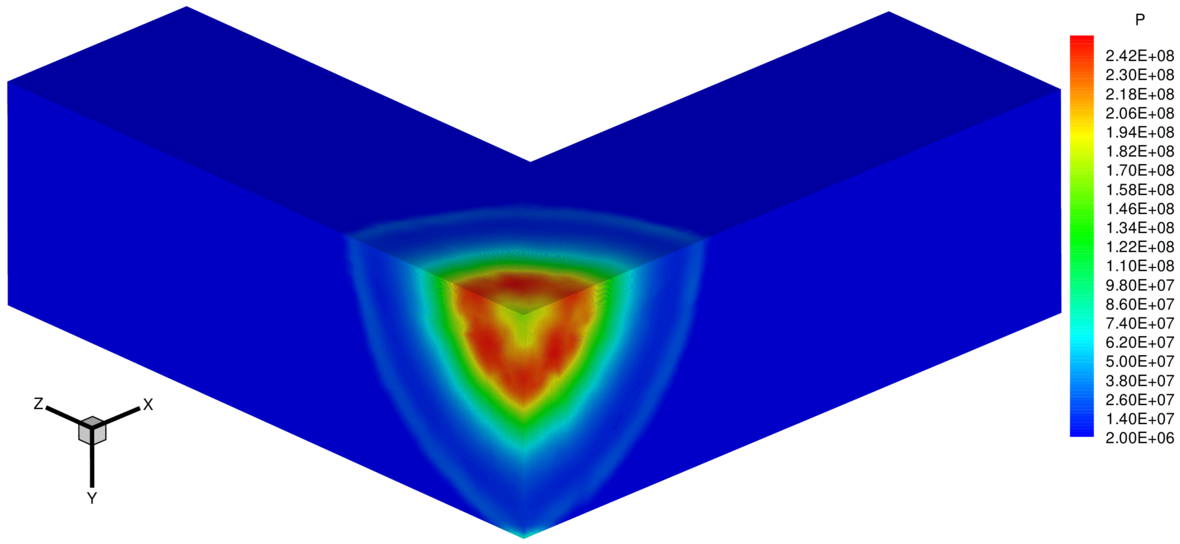}}
\qquad
\subfloat[Adaptive mesh at $t=2.4\times 10^{-4}$s]
{\includegraphics[width=0.36\textwidth]{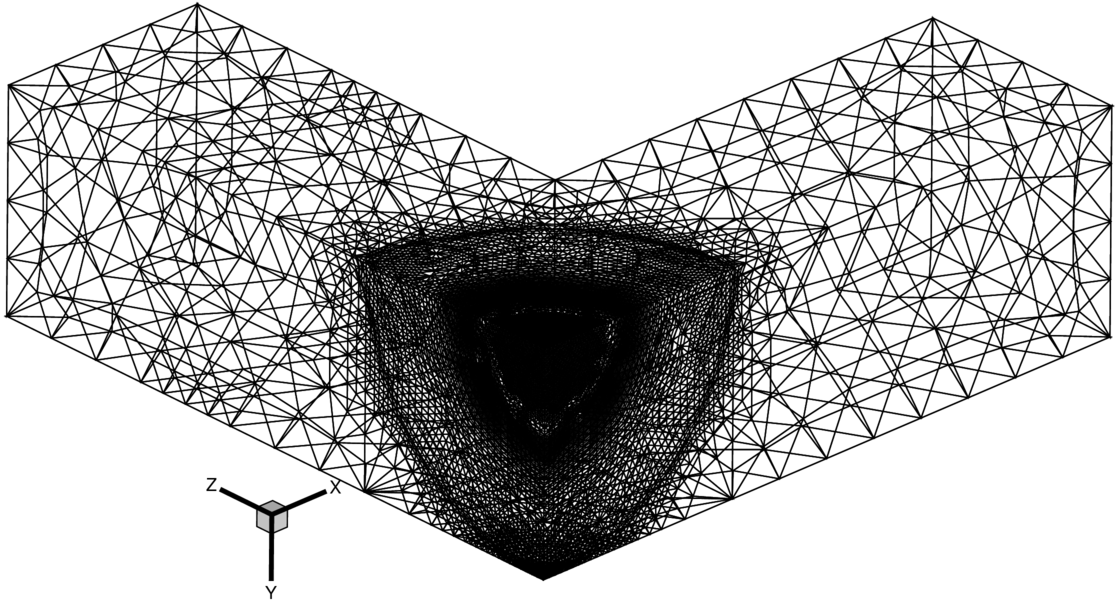}}\\
\subfloat[Pressure contour at $t=9.2\times 10^{-4}$s]
{\includegraphics[width=0.4\textwidth]{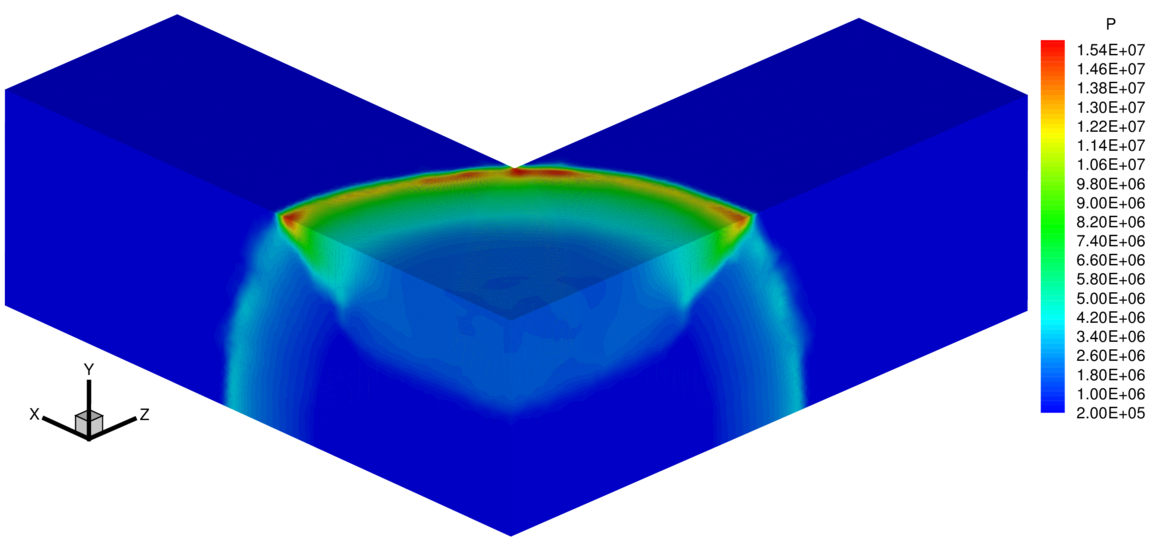}}
\qquad
\subfloat[Adaptive mesh at $t=9.2\times 10^{-4}$s]
{\includegraphics[width=0.36\textwidth]{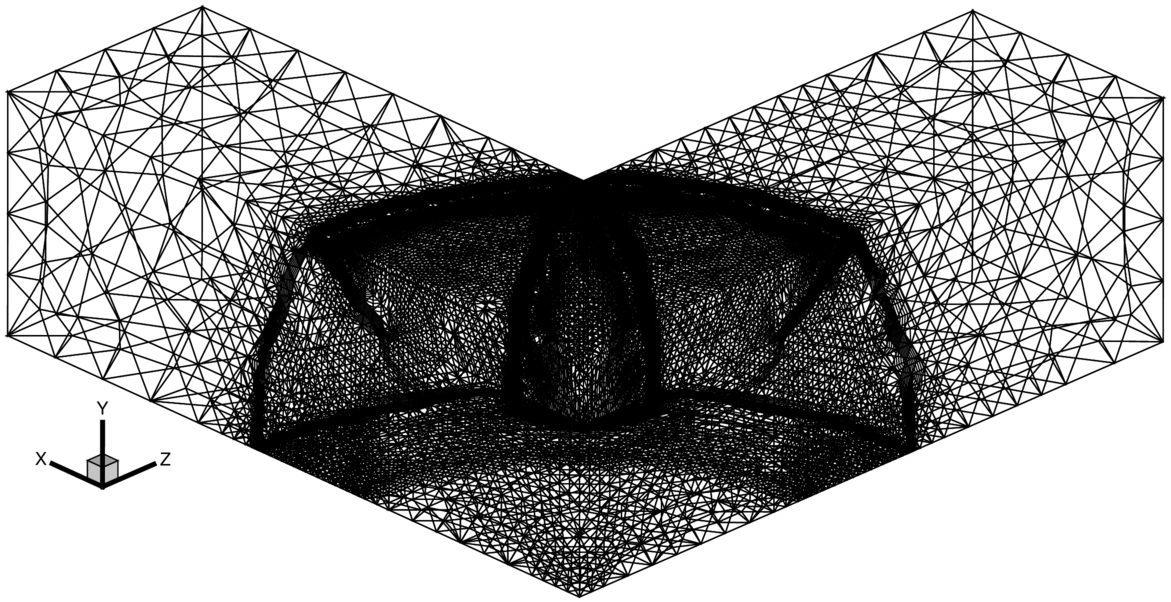}}
\caption{Pressure contours and adaptive meshes for three-dimensional
            shock impact problem. The plots (c) and (d) are turned over in the 
            $y$-direction in order to display the shock reflection.}
\label{rm::3d1}
\subfloat[Pressure slice at $x=2$m]
{\includegraphics[width=0.35\textwidth]{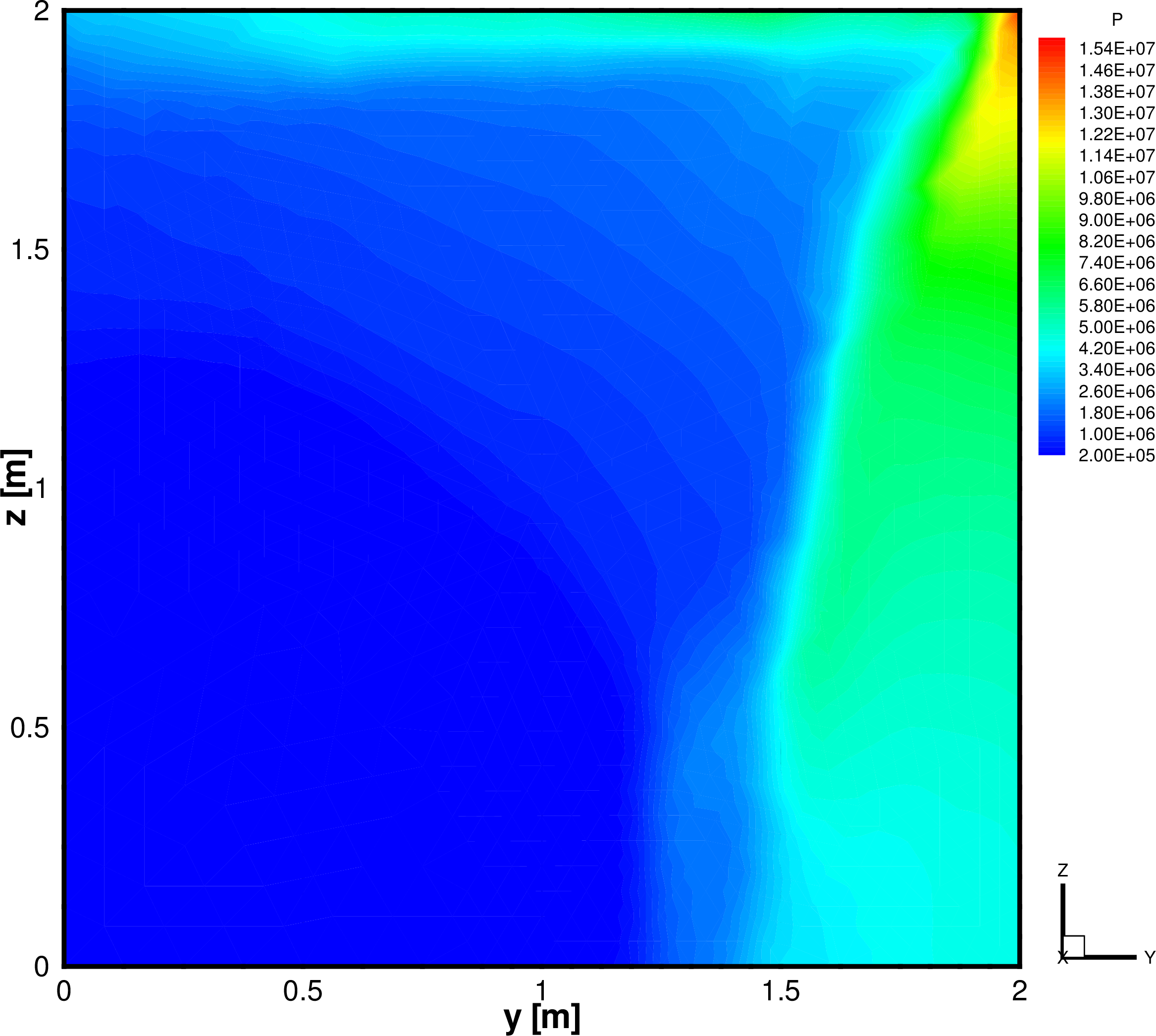}}
\qquad
\subfloat[Pressure slice at $x=2.2$m]
{\includegraphics[width=0.35\textwidth]{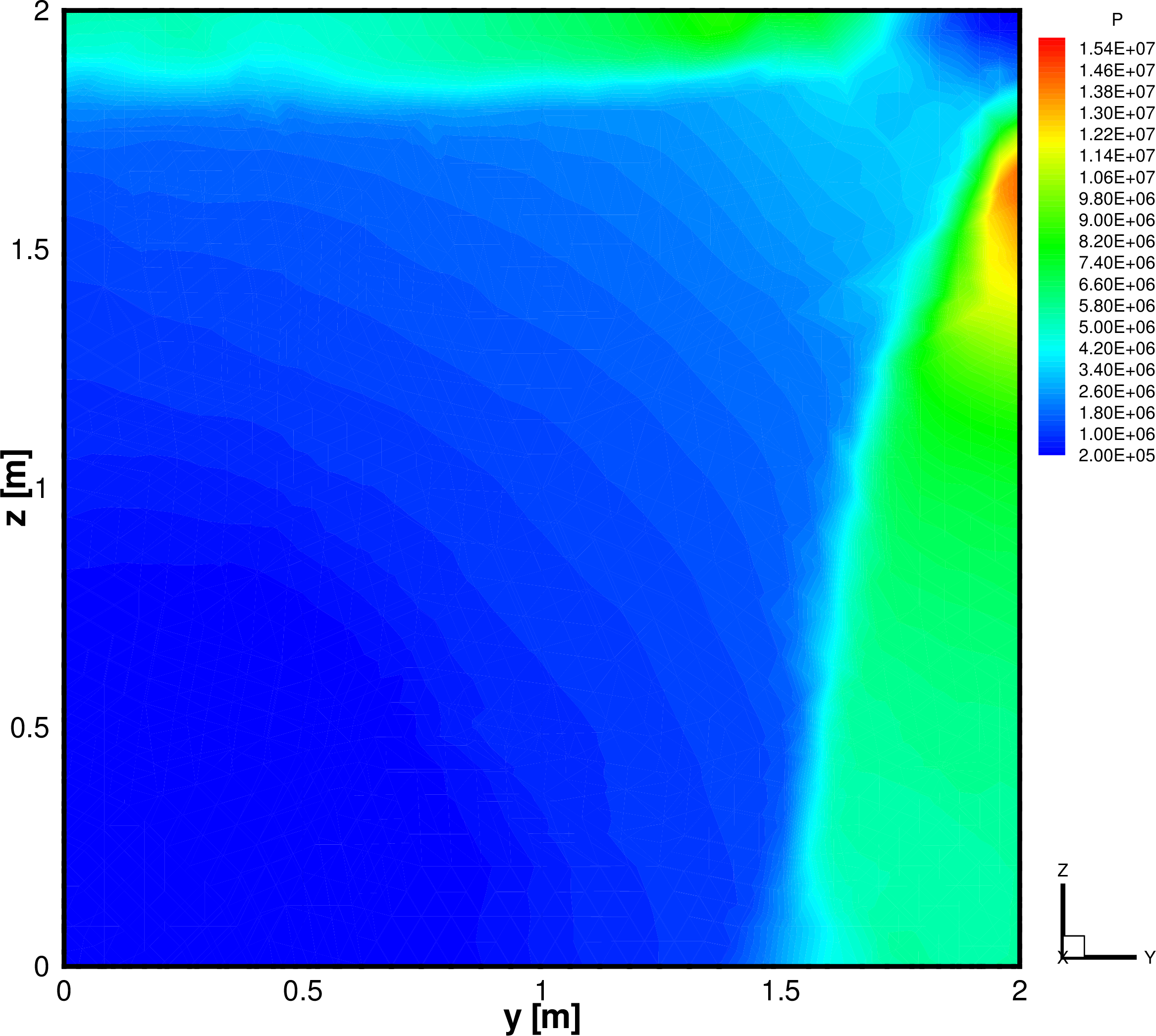}}\\
\subfloat[Pressure slice at $y=1.8$m]
{\includegraphics[width=0.35\textwidth]{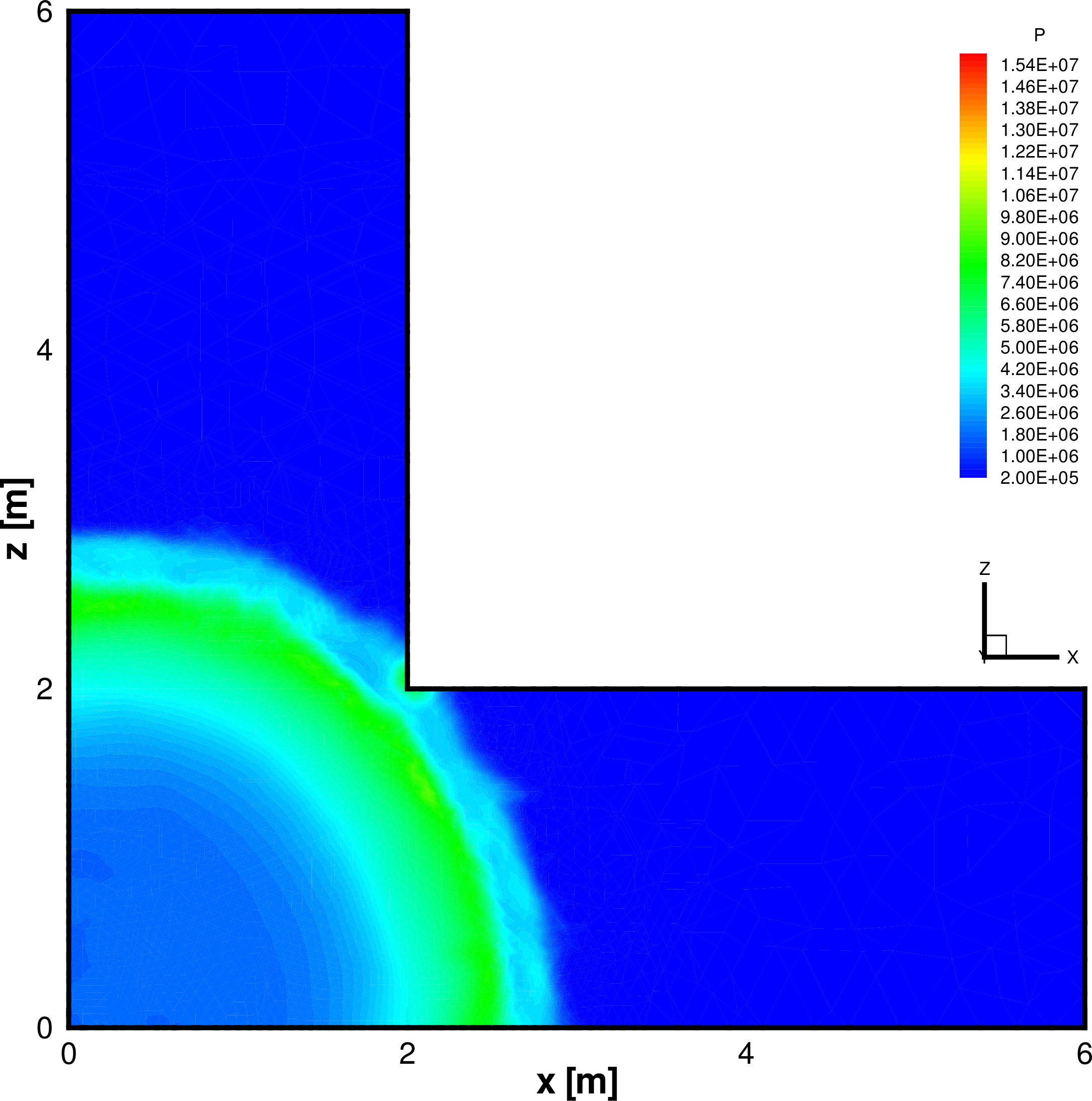}}
\qquad
\subfloat[Pressure slice at $y=2$m]
{\includegraphics[width=0.35\textwidth]{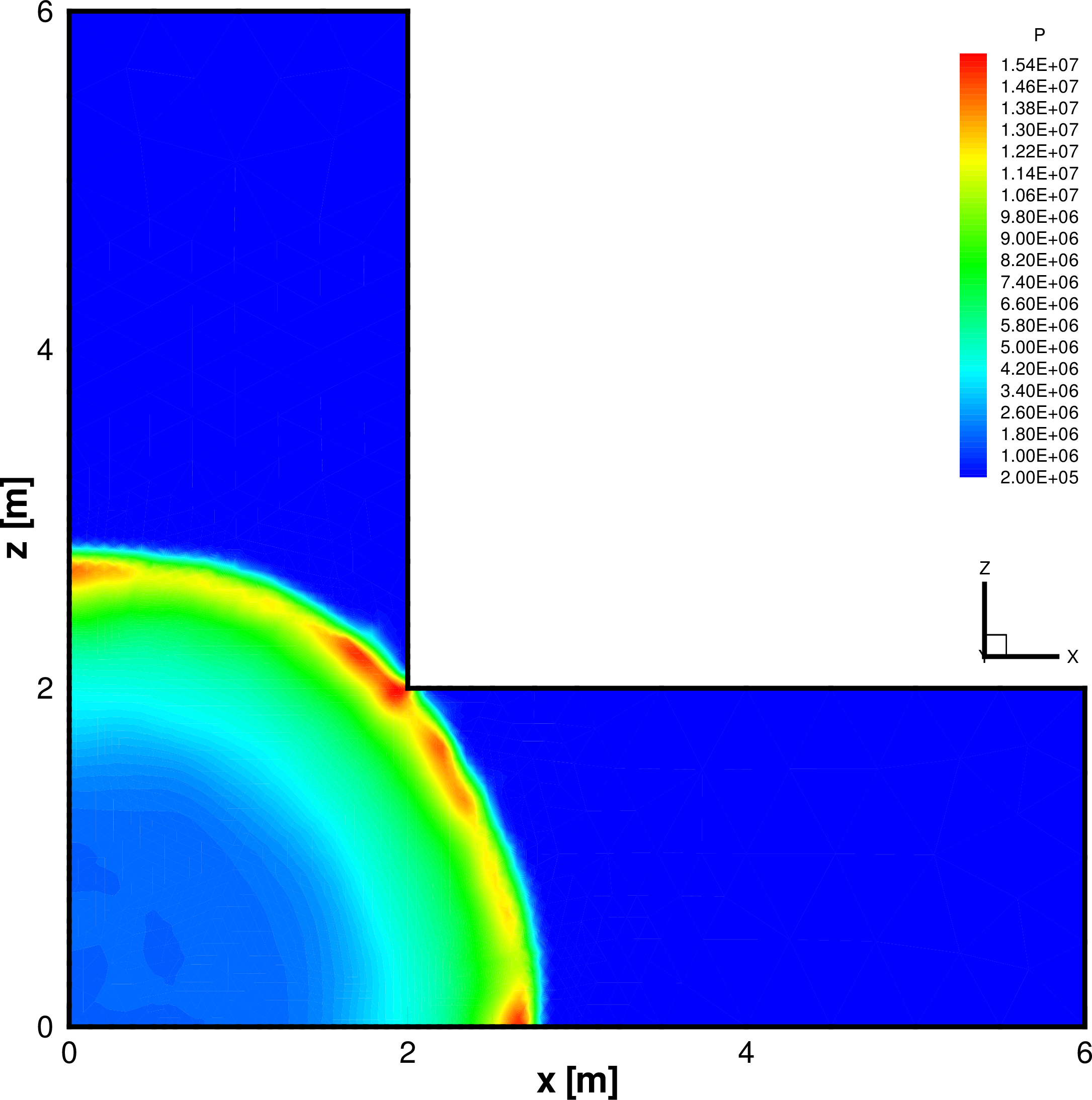}}
\caption{Pressure slices at $t=9.2\times 10^{-4}$s.}
\label{rm::3d2}
\end{figure} 


\section{Conclusions}\label{sec:conclusion}

We extend the numerical scheme in Guo \textit{et al.} \cite{Guo2016} to the
fluids that obey a general Mie-Gr{\"u}neisen equations of state. The algorithm
of the multi-medium Riemann problem is elaborated, which is a fundamental
element of the two-medium fluid flow. A variety of preliminary numerical
examples and application problems validate the effectiveness and robustness of
our methods. In our future work, we will generalize the framework to fluid-solid
coupling problems.

\clearpage
\appendix

\section{Collections of equations of state}

In this appendix we elaborate the equations of state that are mentioned in the
numerical results. For convenience, we also collect the expression of coefficients
and their derivatives at the end of this part.

\subsubsection*{\bf Ideal gas EOS}

Most of gases can be modeled by the ideal gas law
\begin{equation}
p = (\gamma-1) \rho e,
\label{eq:ideal}
\end{equation}
where $\gamma>1$ is the adiabatic exponent.

\subsubsection*{\bf Stiffened gas EOS}

When considering water under high pressures, the following stiffened 
gas EOS is often used \cite{Rallu2009,Wang2008}:
\begin{equation}
p = (\gamma-1) \rho e-\gamma p_\infty,
\label{eq:stiffened}
\end{equation}
where $\gamma>1$ is the adiabatic exponent, and $p_\infty$ is a
constant. 

\subsubsection*{\bf Polynomial EOS}

The polynomial EOS \cite{Jha2014} can be used to model various
materials
\begin{equation}
p = 
\begin{cases}
  A_1\mu + A_2\mu^2 + A_3\mu^3 + (B_0+B_1\mu)\rho_0 e, & 
  \mu>0, \\
  T_1 \mu  + T_2\mu^2 + B_0 \rho_0e,  &\mu \le 0,
\end{cases}
\label{eq:poly}
\end{equation}
where $\mu = {\rho}/{\rho_0} -1$ and $A_1,A_2,A_3,B_0,B_1, T_1,T_2,\rho_0$ are
positive constants. In this paper, we take an alternative formulation in the
tension branch \cite{Autodyn2003}, where $p=T_1 \mu  + T_2\mu^2 +
(B_0+B_1\mu)\rho_0e$ for $\mu\le0$, to ensure the continuity of the speed of
sound at $\mu=0$. Such a formulation avoids the occurance of anomalous waves in
the Riemann problem, which does not exist in real physics. When $B_1\le B_0\le
B_1+2$ and $T_1\ge 2T_2$, the polynomial EOS satisfies the conditions
\textbf{(C1)} and \textbf{(C3)}. In addition, if the density $\rho\ge
{B_0\rho_0}/{(B_1+2)}$, then the polynomial EOS also satisfies the condition
\textbf{(C2)}.

\subsubsection*{\bf JWL EOS}

Various detonation products of high explosives can be characterized
by the JWL EOS \cite{Smith1999}
\begin{equation}
p = A_1\left(1-\frac{\omega\rho}{R_1\rho_0}\right)
 \exp\left(-\frac{R_1\rho_0}{\rho}\right) +
 A_2\left(1-\frac{\omega\rho}{R_2\rho_0}\right) \exp\left(-\frac{
 R_2\rho_0}{\rho}\right) + \omega\rho e,
\label{eq:jwl}
\end{equation}
where $A_1,A_2,\omega,R_1,R_2$ and $\rho_0$ are positive constants.
Obviously the JWL EOS \eqref{eq:jwl} satisfies the conditions
\textbf{(C1)}
and \textbf{(C2)}. To enforce the condition \textbf{(C3)} we first
notice that 
\[
\lim_{\rho\rightarrow 0^+} h'(\rho) = 0.
\]
Then it suffices to ensure that $h''(\rho)\ge 0$, which is equivalent
to
the following inequality in terms of $\nu=\rho_0/\rho$:
\[
R_1\nu-2-\omega
\ge G(\nu):=\dfrac{A_2R_2}{A_1R_1}(2+\omega -R_2\nu)
\exp((R_1-R_2)\nu).
\]
A simple calculus shows that the maximum value of the function 
$G(\nu)$ above is given by
\[
\alpha = \dfrac{A_2R_2^2}{A_1R_1(R_1-R_2)}
\exp\left(\dfrac{(2+\omega)(R_1-R_2)-R_2}{R_2}\right).
\]
Therefore a sufficient condition for
\textbf{(C3)} is that the density satisfies
\[
\rho\le \dfrac{R_1}{2+\omega+\alpha}\rho_0,
\]
which is valid for most cases.

\subsubsection*{\bf Cochran-Chan  EOS}

The Cochran-Chan EOS is commonly used to describe the reactants of 
condensed phase explosives \cite{Saurel2007,Lee2013}, which can be
formulated as
\begin{equation}
p = \dfrac{A_1(R_1-1-\omega)}{R_1-1}
    \left(\dfrac{\rho}{\rho_0}\right)^{R_1}-
    \dfrac{A_2(R_2-1-\omega)}{R_2-1}
    \left(\dfrac{\rho}{\rho_0}\right)^{R_2}+
    \omega \rho e,
\label{eq:CC}
\end{equation}
where $A_1,A_2,\omega,R_1,R_2$ and $\rho_0$ are positive constants.
The Cochran-Chan EOS satisfies the conditions \textbf{(C1), (C2)} and 
\textbf{(C3)} if $1<R_2\le  1+\omega \le  R_1$. 

\setlength{\tabcolsep}{2pt}
\setlength{\rotFPtop}{0pt plus 1fil}
\begin{sidewaystable}
 \centering
 \caption*{List of coefficients and their derivatives for several equations of state.}
 \small
\begin{tabular}{cccccc}
\toprule
& \multicolumn{1}{c}{\bf Ideal}&
 \multicolumn{1}{c}{\bf Stiffened} &
 \multicolumn{1}{c}{\bf Polynomial} &
 \multicolumn{1}{c}{\bf JWL}&
 \multicolumn{1}{c}{\bf Cochran-Chan} \\ \midrule
$\varGamma(\rho)$ &
$\gamma-1$ &
$\gamma-1$  &
$B_1+{(B_0-B_1)\rho_0}/{\rho}$ &
$\omega$ &
$\omega$ \\  [10mm]
$\varGamma'(\rho)$  &
$0$ &
$0$ &
$-{(B_0-B_1)\rho_0}/{\rho^2}$ & 
$0$ &
$0$ \\   [10mm]
$\varGamma''(\rho)$  &
$0$ &
$0$ &
${2(B_0-B_1)\rho_0}/{\rho^3}$ &
$0$ &
$0$ \\  [10mm]
$h(\rho)$ &
$0$ &
$-\gamma p_\infty$ &
$\begin{cases}  
A_1\mu + A_2 \mu^2 + A_3 \mu^3, &
\rho > \rho_0 \\ 
T_1 \mu+T_2\mu^2,
& \rho \le \rho_0
  \end{cases}$  &
 \bgroup
 \footnotesize
\begin{tabular}{r}
$A_1\left(1-\dfrac{\omega\rho}{R_1\rho_0}\right)
     \exp\left(-\dfrac{R_1\rho_0}{\rho}\right)$ \\
$+A_2\left(1-\dfrac{\omega\rho}{R_2\rho_0}\right)
     \exp\left(-\dfrac{R_2\rho_0}{\rho}\right)$     
\end{tabular} 
\egroup &
\bgroup 
 \footnotesize
\begin{tabular}{r}
	$\dfrac{A_1(R_1-1-\omega)}{R_1-1}\left(\dfrac{\rho}{\rho_0}\right)^{R_1}$ \\
	$-\dfrac{A_2(R_2-1-\omega)}{R_2-1}\left(\dfrac{\rho}{\rho_0}\right)^{R_2}$     
\end{tabular} 
\egroup
\\   [10mm]
$h'(\rho)$  &
$0$ &
$0$ &
$\begin{cases}  
(A_1+2A_2\mu+3A_3\mu^2)/\rho_0, &
\rho > \rho_0 \\ 
(T_1+2T_2\mu)/\rho_0,
& \rho \le \rho_0
  \end{cases}$  & 
\bgroup 
 \footnotesize
\begin{tabular}{r}
$A_1\left(\dfrac{R_1\rho_0}{\rho^2}-\dfrac{\omega}{\rho}
-\dfrac{\omega}{R_1\rho_0}\right)
\exp\left(-\dfrac{R_1\rho_0}{\rho}\right)$ \\
$+A_2\left(\dfrac{R_2\rho_0}{\rho^2}-\dfrac{\omega}{\rho}
-\dfrac{\omega}{R_2\rho_0}\right)
\exp\left(-\dfrac{R_2\rho_0}{\rho}\right)$
\end{tabular} 
\egroup & 
\bgroup 
 \footnotesize
\begin{tabular}{r}
	$\dfrac{A_1R_1(R_1-1-\omega)}{R_1-1}\left(\dfrac{\rho}{\rho_0}\right)^{R_1-1}$ \\
	$-\dfrac{A_2R_2(R_2-1-\omega)}{R_2-1}\left(\dfrac{\rho}{\rho_0}\right)^{R_2-1}$     
\end{tabular} 
\egroup
\\ [10mm]
$h''(\rho)$  &
$0$ &
$0$ &
$\begin{cases}  
(2A_2+6A_3\mu)/\rho_0^2, &
\rho > \rho_0 \\ 
2T_2/\rho_0^2,
& \rho \le \rho_0
\end{cases}$  & 
\bgroup
 \footnotesize
\begin{tabular}{r}
	$\dfrac{A_1R_1\rho_0}{\rho^3}
	\left(\dfrac{R_1\rho_0}{\rho}-2-\omega\right)
	\exp\left(-\dfrac{R_1\rho_0}{\rho}\right)$ \\
	$+\dfrac{A_2R_2\rho_0}{\rho^3}
	\left(\dfrac{R_2\rho_0}{\rho}-2-\omega\right)
	\exp\left(-\dfrac{R_2\rho_0}{\rho}\right)$
\end{tabular} 
\egroup &
\bgroup
 \footnotesize
\begin{tabular}{r}
	${A_1R_1(R_1-1-\omega)}\left(\dfrac{\rho}{\rho_0}\right)^{R_1-2}$ \\
	$-{A_2R_2(R_2-1-\omega)}\left(\dfrac{\rho}{\rho_0}\right)^{R_2-2}$     
\end{tabular} 
\egroup
\\ \bottomrule
\end{tabular}
\end{sidewaystable}
\clearpage


\section*{\refname}
\bibliographystyle{unsrt}
\bibliography{reference}
\end{document}